\documentclass[reqno]{amsart}
\usepackage[margin = 1in]{geometry}
\usepackage{amsmath, amssymb, amsthm, fancyhdr, verbatim, graphicx}
\usepackage{enumerate}
\usepackage[all]{xy}
\usepackage[dvipsnames]{xcolor}
\usepackage{mathrsfs}
\usepackage{tikz-cd}
\usepackage{framed}
\usepackage[hidelinks, pagebackref]{hyperref}
\hypersetup{colorlinks=true, linkcolor=magenta,citecolor=cyan} 
\usepackage[titletoc]{appendix}
\usepackage{bbm}
\usepackage{lipsum}
\usepackage{adjustbox}
\usepackage{commands}
\usepackage{envi}
\usepackage{stmaryrd}
\usepackage{leftindex}
\usepackage[english]{babel}
\usepackage{dynkin-diagrams}

\numberwithin{equation}{section}

\makeatletter
\def\th@remark{%
  \thm@headfont{\bfseries}%
  \normalfont 
  \thm@preskip \thm@preskip 
  \thm@postskip\thm@preskip
}
\def\imod#1{\allowbreak\mkern5mu({\operator@font mod}\,\,#1)}
\makeatother

\numberwithin{equation}{section}

\title{Arithmetic volume of Shtukas and Langlands duality}
\author{Zeyu Wang}
\address{Massachusetts Institute of Technology, Department of Mathematics, 77 Massachusetts Avenue, Cambridge, MA 02139, USA}
\email{wangzeyu@mit.edu}

\author{Wenqing Wei}
\address{University of California Berkeley, Department of Mathematics, Berkeley, CA 94720, USA}
\email{weiwenqing@berkeley.edu}

\begin{document}

\begin{abstract}
    We extend the work of Feng--Yun--Zhang relating the arithmetic volume of Shtukas with derivatives of zeta functions by allowing arbitrary coweights for split semisimple algebraic groups. As in their original work, the formula involves some numbers called eigenweights. We obtain uniform formulas for the eigenweights in terms of the Langlands dual group, marking the first structural role for the dual group in such formulas governing derivatives of $L$-functions.
\end{abstract}

\maketitle
\tableofcontents

\section{Introduction}
\subsection{Motivation}
Shimura varieties play a fundamental role in the Langlands correspondence over number fields, and their geometry encodes deep arithmetic information. One manifestation of this philosophy is that the volume of Shimura varieties is related to Dirichlet $L$-functions of number fields, while their arithmetic volume, namely the volume of integral models, is related to the first derivative of these $L$-functions. See \cite[\S1.2]{FYZvolume} for a summary of the literature.

The moduli of Shtukas provide the function field analogue of (integral models of) Shimura varieties. In \cite{FYZvolume}, the arithmetic volume of moduli stacks of Shtukas with minuscule modification type was studied and related to higher derivatives of the zeta function of the curve. Compared with the number field setting, two striking new features arise. First, derivatives of arbitrary order naturally appear, in contrast to the number field case where only the first derivative is expected. Second, certain nontrivial constants, referred to as \emph{eigenweights}, enter the formulas. While their meaning remains somewhat mysterious in the number field setting, they admit a natural geometric interpretation in the function field context. This additional structure can in turn be used to predict conjectural formulas for arithmetic volumes of Shimura varieties, providing part of the motivation for the work of \cite{FYZvolume}.

However, the results of \cite{FYZvolume} are restricted to minuscule modification types, which obscures the conceptual unity of the theory. Moreover, although eigenweights admit a geometric definition, their explicit computation remains intricate. Even for groups of type $A$, the formulas obtained in \cite{feng2026eigenweightsarithmetichirzebruchproportionality} involve complicated information such as the character tables of symmetric groups.

In this article, we extend these results to arbitrary modification types and provide a conceptual and uniform description of eigenweights in terms of the Langlands dual group. To the best of our knowledge, this is the first instance in which the Langlands dual group plays a direct and structural role in formulas governing derivatives of $L$-functions. In addition, we carry out explicit computations of eigenweights in most cases of fundamental importance, leading to formulas that are more elementary than those previously known.

\subsection{Main result}
We now present the main result of this article and recall the necessary background.
\subsubsection{Moduli stack of Shtukas}
Fix a proper smooth geometrically connected curve $C$ defined over a finite field $\FF_q$. Let $G$ be a split semisimple algebraic group defined over $\FF_q$, and let $\Bun_G$ be the moduli stack of principal $G$-bundles on $C$. Fix a maximal torus $T\subset G$. For each dominant coweight $\lambda\in X_*(T)_+$, one has the Hecke stack
\[
\begin{tikzcd}
    \Bun_G & \ar[l, "\overline{\lh}"'] \Hk_{G,\leq \lambda} \ar[r, "\overline{\rh}"] & \Bun_G
\end{tikzcd}.
\] Here, we use $\overline{\lh}$ (resp. $\overline{\rh}$) to denote the map remembering only the $G$-bundle on the left (resp. right). We put an overline to distinguish it from the Hecke map that remembers also the point on the curve.

The \emph{moduli stack of Shtukas with one leg} $\Sht_{G,\leq\lambda}$ is defined via the Cartesian square
\[
\begin{tikzcd}
    \Sht_{G,\leq\lambda} \ar[r, "f_{\Sht}"] \ar[d] & \Hk_{G,\leq \lambda} \ar[d, "{(\overline{\lh},\Frob\circ \overline{\rh})}"] \\
    \Bun_G \ar[r, "\D_{\Bun_G}"] & \Bun_G\times\Bun_G
\end{tikzcd}
\]
where $\Frob:\Bun_G\to\Bun_G$ is the Frobenius map and $\D_{\Bun_G}:\Bun_G\to\Bun_G\times\Bun_G$ is the diagonal map.

Moreover, for a sequence of dominant coweights $\lambda_I=(\lambda_1,\cdots,\lambda_r)\in X_*(T)^r_+$ where $I=\{1,\cdots,r\}$, one can define the iterated Hecke stack
\[
\Hk_{G,\leq\lambda_I}:=\Hk_{G,\leq \lambda_1}\times_{\Bun_G}\Hk_{G,\leq \lambda_2}\times_{\Bun_G}\cdots\times_{\Bun_G}\Hk_{G,\leq \lambda_r}.
\]

It defines a correspondence
\[
\begin{tikzcd}
    \Bun_G & \ar[l, "\overline{\lh}_I"'] \Hk_{G,\leq \lambda_I} \ar[r, "\overline{\rh}_I"] & \Bun_G
\end{tikzcd}.
\]

One defines the \emph{moduli stack of (iterated) Shtukas with $r$-legs} via the Cartesian square
\[
\begin{tikzcd}
    \Sht_{G,\leq\lambda_I} \ar[r, "f_{\Sht,I}"] \ar[d] & \Hk_{G,\leq \lambda_I} \ar[d, "{(\overline{\lh}_I,\Frob\circ \overline{\rh}_I)}"] \\
    \Bun_G \ar[r, "\D_{\Bun_G}"] & \Bun_G\times\Bun_G
\end{tikzcd}.
\]

\subsubsection{Arithmetic volume of the moduli of Shtukas}

Let $d=\dim \Sht_{G,\leq\lambda_I}$. For a top-degree cohomology class $\alpha\in H^{2d}(\Sht_{G,\leq\lambda_I})$, viewed as a ``top form,'' it is natural to consider the corresponding ``volume'' of $\Sht_{G,\leq\lambda_I}$. However, this notion is not well-behaved, since $\Sht_{G,\leq\lambda_I}$ is neither proper nor smooth. In fact, one has $H^{2d}(\Sht_{G,\leq\lambda_I})=0$ whenever $r>0$.

In \cite{FYZvolume}, the authors introduced an \emph{ad hoc} definition of this volume in the special case where $\alpha$ can be thought of as $f_{\Sht,I}^*\alpha_0$ for some $\alpha_0\in H^{2d}(\Hk_{G,\leq\lambda_I})$. The idea originates from the Grothendieck--Lefschetz trace formula. More precisely, one considers the maps
\[
H^*_c(\Bun_G)\xrightarrow{(\overline{\rh}_I)^*} IH_c^*(\Hk_{G,\leq\lambda_I})
\xrightarrow{(\overline{\lh}_I)_!} H^{*-2d}_c(\Bun_G),
\]
where $IH_c^*(\Hk_{G,\leq\lambda_I})$ denotes the compactly supported intersection cohomology of $\Hk_{G,\leq\lambda_I}$.\footnote{In \cite{FYZvolume}, only the case where $\Hk_{G,\leq\lambda_I}$ is smooth is considered. The extension to the non-smooth case via intersection cohomology is straightforward.} We refer to \S\ref{sec:detlbloc} for a precise definition of these maps.

Given the data above, one considers the operator \begin{equation}\label{eq:gammaintro}\Gamma_{c,\alpha_0}:H^*_c(\Bun_G)\to H^*_c(\Bun_G)\end{equation} defined by \[\Gamma_{c,\alpha_0}(-):=(\overline{\lh}_I)_!((\overline{\rh}_I)^*(-)\cup \alpha_0).\] Note that the operator $\Gamma_{c,\alpha_0}$ preserves the cohomological degree. One defines the \emph{arithmetic volume} of the moduli space $\Sht_{G,\leq\lambda_I}$ with respect to ``$\alpha$" to be the number \begin{equation}\label{eq:volumedefintro}
    \mathrm{vol}(\Sht_{G,\leq\lambda_I},\alpha):= \tr(\Frob\circ \Gamma_{c,\alpha_0},H^*_c(\Bun_G)).
\end{equation} 

See Remark \ref{rmk:converg} for a discussion of convergence issues related to this definition.

\begin{example}
When $r=0$, one has $\Hk_{G,\varnothing}=\Bun_G$ and $\Sht_{G,\varnothing}^0=\Bun_G(\FF_q)$. We choose $\alpha_0=1\in H^0(\Bun_G)$. In this case, we have $\mathrm{vol}(\Bun_G(\FF_q),1)=\tr(\Frob,H_c^*(\Bun_G))$. Modulo convergence issues, the Grothendieck--Lefschetz fixed point formula predicts that $\mathrm{vol}(\Bun_G(\FF_q),1)=|\Bun_G(\FF_q)|$. This matches the natural expectation that the volume of the discrete stack $\Bun_G(\FF_q)$ is its size.
\end{example}

\subsubsection{Main result: same modification type}\label{sec:mainresultsame}
Now we specialize to a very canonical choice of $\alpha_0$. When $r=1$, there is a distinguished line bundle $\cL_{\det}\in \Pic(\Hk_{G})_{\QQ}$ called the \emph{determinant line bundle}, obtained by pulling back its local counterpart defined in \S\ref{sec:detlbloc}.

For general $r$, consider the natural projections $r_i:\Hk_{G,\leq \lambda_I}\to \Hk_{G,\leq\lambda_i}$ for $i\in I$. Define
\[
\cL_{\det,I}:=\bigl((r_i)^*\cL_{\det}\bigr)_{i\in I}\in \Pic(\Hk_{G,\leq\lambda_I})_{\QQ}^I.
\]
We take
\[
\alpha_0=\prod_{i=1}^{r} c_1\bigl((r_i)^*\cL_{\det}\bigr)^{d_i}\in H^{2d}(\Hk^{\loc}_{G,\leq\lambda_I}),
\quad \text{where } d_i=\langle 2\rho,\lambda_i\rangle +1.
\]
In this case, we write
\begin{equation}\label{eq:gammaintrodet}\Gamma_{c,\cL_{\det,I}}=\Gamma_{c,\alpha_0}\end{equation}
\[
\mathrm{vol}(\Sht_{G,\leq\lambda_I},f_{\Sht,I}^*\cL_{\det,I})
=
\mathrm{vol}(\Sht_{G,\leq\lambda_I},\alpha).
\]

We first state a version of the main result when all $\{\lambda_i,1\leq i\leq r\}$ are the same, which already reflects the most interesting aspects of this problem. See \S\ref{sec:mainresultdiff} for a more general version.

\begin{thm}\label{thm:mainsame}
    For $\lambda_I=(\lambda,\lambda,\cdots,\lambda)$ where $\lambda\in X_*(T)_+$, assuming $\Sht_{G,\lambda_I}\neq \varnothing$, we have
    \begin{equation}
        \mathrm{vol}(\Sht_{G,\leq\lambda_I},f_{\Sht,I}^*\cL_{\det,I})
        =
        |\pi_1(G)|q^{\dim \Bun_G}(\log q)^{-r}
        \Bigl(\dfrac{d}{ds}\Bigr)^r\Big|_{s=0}
        \Bigl(q^{(2g-2)b_{\lambda}s}\displaystyle\prod_{i=1}^n\zeta_C(-\epsilon_{\lambda,i}s+d_i)\Bigr).
    \end{equation}
    Here, \begin{itemize}
        \item $n=\mathrm{rk}(G)$ is the rank of the group $G$,
        \item $d_i, 1\leq i\leq n$ are the degrees of fundamental invariants of $G$, that is, the degree of generators of the free polynomial ring $\Qlbar[\mathfrak{g}]^G$ where $\frg$ is the Lie algebra of $G$.
        \item $b_{\lambda}\in \QQ,\epsilon_{\lambda,i}\in \overline{\QQ}$ are some constants,
        \item $\zeta_C(s)$ is the zeta function of $C$.
        \item $g$ is the genus of the curve $C$.
    \end{itemize}
\end{thm}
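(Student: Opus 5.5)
We follow the strategy of \cite{FYZvolume}, adapted to non-minuscule $\lambda$ by working with intersection cohomology of $\Hk_{G,\leq\lambda}$. The plan has three steps: reduce the trace in \eqref{eq:volumedefintro} to a computation on $H^*_c(\Bun_G)$ built from the Atiyah--Bott description; identify the one-leg operator $\Gamma_{c,\cL_{\det}}$ on generators; and assemble the $r$-fold composition into an $r$-th derivative.

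\textbf{Step 1: structure of $H^*_c(\Bun_G)$.} By Atiyah--Bott, $H^*(\Bun_G)$, after a $|\pi_1(G)|$ worth of components, is the free graded-commutative algebra on Künneth components of the characteristic classes of the universal bundle. These come from the fundamental invariants $f_i\in \Qlbar[\frg]^G$ of degree $d_i$, paired with $H^*(C)$. By Poincaré duality (in the Ind/Pro sense), $H^*_c(\Bun_G)$ is the dual Sym-module. The Frobenius traces of these generators give
\[
|\pi_1(G)|\,q^{\dim\Bun_G}\prod_i\zeta_C(d_i),
\]
which is the $r=0$ case.

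\textbf{Step 2: the one-leg operator.} Next we show that $\Gamma_{c,\cL_{\det}}$, for a single leg of type $\lambda$, acts as a derivation-type operator on the generators. Concretely, we compute $(\overline{\lh})_!\bigl(\overline{\rh}^*(-)\cup c_1(\cL_{\det})^{\langle2\rho,\lambda\rangle+1}\bigr)$. For this we factor through $\Hk_{G,\leq\lambda}\to C\times\Bun_G$ with fibers the Schubert variety $\mathrm{Gr}_{\leq\lambda}$ twisted by the torsor. We then push forward fiberwise using the local statement of \S\ref{sec:detlbloc}.

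In $H^*_{G[[t]]}$-equivariant intersection cohomology of $\mathrm{Gr}_{\leq\lambda}$, we compute the pushforward of $\rh^*f_i\cdot c_1(\cL_{\det})^{\langle 2\rho,\lambda\rangle+1}$ to $H^*_{G}(\mathrm{pt})$. The expected answer is a multiple $\epsilon_{\lambda,i}$ of $f_i$ itself, plus a scalar term $b_\lambda$ (the degree of $\cL_{\det}$ on the fiber) for the constant.

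To get this eigen-decomposition we use geometric Satake. $IH^*_{G[[t]]}(\mathrm{Gr}_{\leq\lambda})$ is identified with $V_\lambda\otimes$ equivariant parameters, with $c_1(\cL_{\det})$ acting through a principal nilpotent/regular element in $\check{\frg}$. The eigenweights $\epsilon_{\lambda,i}$ arise as the eigenvalues of the induced action on primitive invariants; this is where they may be algebraic numbers rather than rational. Globalizing over $C$ then makes $\Gamma_{c,\cL_{\det}}$ act on the generator $f_i\otimes\gamma$ by shifting via the class of the curve. On the Frobenius-graded pieces this becomes multiplication by $\epsilon_{\lambda,i}$ in the exponent, and the canonical-bundle contribution gives the factor $(2g-2)b_\lambda$.

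\textbf{Step 3: assembling.} Since each leg has the same type, the total operator is the $r$-th power of a first-order operator $D$. On the Sym algebra, $D$ acts as the derivation determined by its values on generators. Hence $\tr(\Frob\circ D^r)$ equals $(\log q)^{-r}(d/ds)^r|_{s=0}$ of
\[
\tr\bigl(\Frob\circ q^{sD}\bigr)=q^{(2g-2)b_\lambda s}\prod_i\zeta_C(d_i-\epsilon_{\lambda,i}s).
\]
This is the same generating-function trick as in \cite{FYZvolume}, and it gives the stated formula.

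The \textbf{main obstacle} is Step 2 in the non-minuscule case. There $\mathrm{Gr}_{\leq\lambda}$ is singular, and we must show that the IC-pushforward really acts diagonally on the invariant generators, up to a derivation, with eigenvalues expressible via $\check G$. I would first try to reduce to the Mirković--Vilonen/Satake description, in which cup product with $c_1(\cL_{\det})$ corresponds to a principal $\mathfrak{sl}_2$ element acting on $V_\lambda$. I would then compute the relevant equivariant trace by localization to $T$-fixed points.
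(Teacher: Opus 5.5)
There is a genuine gap in Step~2, at the passage from the local computation to the global operator. Fiberwise pushforward along $\Hk_{G,\leq\lambda}\to\Bun_G\times C$ (i.e.\ base change in \eqref{diag:localtoglo}) only shows that $H^*(\frd)\circ H^*(\frc)$ sends $f^*\alpha$ to $f^*\Gamma^{\loc}_v(\alpha)$. That covers only classes pulled back along the restriction-to-the-disc map $f$ at the leg. But $\Gamma_v$ must be evaluated on tautological classes $\alpha^z=\int_z f^*\alpha$. These integrate over a copy of $C$ independent of the leg, and $\overline{\rh}^*\alpha^z$ is not pulled back from $\Hk^{\loc}_G$. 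Since $H^*(\frc)$ is linear only for $\lh^*$ and $H^*(\frd)$ only for $\rh^*$, you need $\lh^*\alpha^z-\rh^*\alpha^z$ exactly.

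This is the paper's key new input, the commutator relation of Theorem~\ref{thm:universalcommutator}: $\lf^*\alpha-\rf^*\alpha=\D_{\Hk,!}f_{\Hk}^*[\alpha]$, with $[\alpha]=\hbar^{-1}(\lh_{\loc}^*\alpha-\rh_{\loc}^*\alpha)$. It gives $\lh^*\alpha^z-\rh^*\alpha^z=f_{\Hk}^*[\alpha]\cup l^*\PD(z)$, hence $\Gamma_v(\alpha^z)=(2g-2)b_v\,\alpha^z+\widetilde{\nabla}^{\loc}_v(\alpha)^z$. Both sides obviously agree after pulling back along $\D_{\Hk}$, but the identity itself is not formal. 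The paper proves it in stages:
\begin{itemize}
\item multiplicativity in $\alpha$ and functoriality in $G$;
\item an equivalent module version, which needs Ginzburg's injectivity theorem and a freeness argument to detect classes on $I\!H^*$;
\item stability under convolution;
\item reduction to $\GL_N$ with $\IC_{(1,0,\dots,0)}$, where \cite[Theorem\,3.2.1]{FYZvolume} applies.
\end{itemize}
Your phrase that globalizing ``shifts via the class of the curve'' asserts exactly this step without a mechanism. ``Following FYZ with intersection cohomology'' does not supply it either: their version of this step uses the Vinberg semigroup and is confined to minuscule $\lambda$.

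The rest of the plan also needs repair, and the obstacle you name is not the real one. The local computation must be $L^+G\rtimes\Aut(D)$-equivariant, not merely $G[[t]]$-equivariant. The pushforward of $\rh^*f_i\cdot c_1(\cL_{\det})^{d_\lambda+1}$ raises degree by $2$. At $\hbar=0$ it equals $f_i\cdot\Gamma^{\loc}_v(1)|_{\hbar=0}\in f_i\cdot H^2(\mathbb{B}G)=0$. So your ``multiple of $f_i$ plus a scalar'' is off by a factor of $\hbar$: $b_\lambda$ and $\epsilon_{\lambda,i}$ live entirely in the $\hbar$-coefficient. In particular $b_\lambda$ is not the fiber degree of $\cL_{\det}$, which is $\deg_\lambda$; it is defined by $\Gamma^{\loc}_v(1)=-b_v\hbar$.

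Beyond that, the local side is formal; no singularity analysis, localization or Satake is needed. The identity $[\alpha\beta]=\lh^*_{\loc}\alpha\,[\beta]+[\alpha]\,\rh^*_{\loc}\beta$ makes $\widetilde{\nabla}^{\loc}_v\otimes_{k[\hbar]}k$ a derivation preserving $F^{\bullet}_{\aug}$. The eigenweights are, up to sign, by definition the eigenvalues of its $\Gr^1_{\aug}$ on $\mathbb{V}$. The dual-group description only serves to compute them. However, this derivation is controlled on $f_i$ only modulo $F^2_{\aug}$.

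Globally, $\Gamma_v$ is not a derivation at all: $\Gamma_v(\prod_i\alpha_i^{z_i})$ has terms involving $[\alpha_i][\alpha_j]$ and $z_i\cap z_j$. So Step~3 first requires showing that $\Gamma_v$ preserves the Ran filtration, and that on $\Gr^{\Ran}_\bullet$ it is a derivation preserving $F^\bullet_{\aug}$. The trace is then computed on $\Gr_{\aug}\Gr^{\Ran}\cong\Sym(\mathbb{V}\otimes H_\bullet(C))$, where the operator is $(2g-2)b_v\,\id+\nabla_{E_v\otimes\id}$. With equal legs no commutation hypothesis is needed.
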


    When $\lambda$ is minuscule, Theorem \ref{thm:mainsame} is a special case of \cite[Theorem\,1.3.8]{FYZvolume}.\footnote{Strictly speaking, the line bundle used in \cite[Theorem\,1.3.8]{FYZvolume} is different, but it is easy to compare the two results.}

    The numbers $\epsilon_{\lambda,i}$ are called \emph{eigenweights} in \cite{FYZvolume}. They are rational numbers when $G$ is not of type $D_n$ for $n\geq 4$ even. See Example \ref{eg:Edet} for the precise meaning of these numbers. In \S\ref{sec:Eexamples}, we explicitly calculate these numbers for all coweights of classical groups and most fundamental coweights for exceptional groups, resulting in formulas that are more elementary than those in \cite{feng2026eigenweightsarithmetichirzebruchproportionality}.

    The constants $b_{\lambda}$ are less emphasized in \cite{FYZvolume} as they are easy when $\lambda$ is minuscule. 
    See \S\ref{sec:bexamples} for some computational results on these numbers.

    In \S\ref{sec:langlandsdual}, we will give a uniform description of the constants $b_{\lambda},\epsilon_{\lambda,i}$ in terms of the Langlands dual group $\Gc$ of $G$.

\begin{remark}
    Generalization of Theorem \ref{thm:mainsame} to the case that $G$ is split reductive should be straightforward. We only state the result for semisimple groups to keep the formula clean and compact.
\end{remark}

\begin{remark}
    Theorem \ref{thm:mainsame} can be regarded as an instance of relative Langlands duality in the sense of \cite{BZSV}: On the automorphic side, one takes the trivial spherical $G$-variety $G \curvearrowright \mathrm{point}$ corresponding to the constant period; on the spectral side, one takes the twisted cotangent bundle $\Gc \curvearrowright T^*_{\psi}(\Gc/\Nc)$ corresponding to the Whittaker period. From this point of view, the constants $b_{\lambda},\epsilon_{\lambda,i}$ can be read off from the Poisson structure on the local Plancherel algebra introduced in \cite[\S8]{BZSV} (see Remark \ref{rmk:BZSV}). 
    
\end{remark}

\subsubsection{Main result: different modification types}\label{sec:mainresultdiff}
Now we give a more general version of Theorem \ref{thm:mainsame} in the case where $\lambda_I=(\lambda_1,\cdots,\lambda_r)\in X_*(T)^r_+$ and the $\lambda_i$ are not necessarily equal.

For each $i\in I$, define a differential operator on $\mathbb{R}^n=\{(s_1,\cdots,s_n)\mid s_j\in \mathbb{R}\}$ by
\begin{equation}\label{eq:diffintro}
D_{\lambda_i}:=(2g-2)b_{\lambda_i}-(\log q)^{-1}\sum_{j=1}^n\epsilon_{\lambda_i,j}\partial_{s_j}.
\end{equation}
Here the numbers $b_{\lambda_i}$ and $\epsilon_{\lambda_i,j}$ are the same as those in Theorem \ref{thm:mainsame} for the coweight $\lambda_i$. See \S\ref{sec:eigenweights} for the ordering of these numbers.

Consider the $L$-function
\begin{equation}
\mathscr{L}_{C,G}(s_1,\cdots,s_n):=\prod_{i=1}^n\zeta_C(s_i+d_i).
\end{equation}

The following theorem is a special case of Theorem \ref{thm:main}.

\begin{thm}\label{thm:maindiff}
Under Assumption \ref{assumption:paircomm} (which always holds when $G$ is not of type $D_n$ for $n$ even) and assuming $\Sht_{G,\leq\lambda_I}\neq\varnothing$, we have
\begin{equation}
\mathrm{vol}(\Sht_{G,\leq\lambda_I},f_{\Sht,I}^*\cL_{\det,I})=
|\pi_1(G)|q^{\dim\Bun_G}\big((\prod_{i=1}^r D_{\lambda_i})\mathscr{L}_{C,G}(s_1,\cdots,s_n)\big)\big|_{s_1=\cdots=s_n=0}.
\end{equation}
\end{thm}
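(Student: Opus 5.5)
We follow the strategy of \cite{FYZvolume}: first compute the action of each single-leg operator $\Gamma_{c,\cL_{\det}}$ for $\lambda_i$ on $H^*_c(\Bun_G)$, then use the factorization of the iterated Hecke stack to compose them, and finally take the trace of Frobenius. The basic input is the Atiyah--Bott description of the cohomology: $H^*(\Bun_G)$, on each connected component (indexed by $\pi_1(G)$), is the free graded-commutative algebra generated by the Künneth components of the characteristic classes $c_j(\cE)$ of the universal bundle, for the fundamental invariants of degrees $d_1,\dots,d_n$. By Poincaré duality, $H^*_c(\Bun_G)$ is the dual of this algebra up to a shift of $2\dim\Bun_G$.

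\textbf{Step 1: the single-leg operator is a differential operator.} Fix one leg with coweight $\mu$. The operator $\Gamma_{c,\cL_{\det}}$ is dual to the operator $\alpha\mapsto(\overline{\rh})_*((\overline{\lh})^*\alpha\cup c_1(\cL_{\det})^{\langle2\rho,\mu\rangle+1})$ on $H^*(\Bun_G)$. The plan is to localize. Pulling back along $\Hk_{G,\leq\mu}\to C\times\Bun_G$, the Hecke modification is étale-locally the affine Grassmannian Schubert variety $\Gr_{\leq\mu}$ twisted by a torsor. So the pushforward of $c_1(\cL_{\det})^{d}$ times pulled-back characteristic classes reduces to $L^+G$-equivariant integrals over $\Gr_{\leq\mu}$; here we use intersection cohomology, and hence IC-stalk integration, in the nonminuscule case. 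In terms of $H^*_{G}(\mathrm{pt})[\![t]\!]$-valued classes, $\lh^*f-\rh^*f$ for an invariant polynomial $f$ is divisible by a local class. The equivariant localization computation then shows that, modulo terms that vanish after integration, the operator acts as a derivation on the generators $c_j$. This derivation is controlled by a linear map $\frg^*$-invariants $\to$ invariants of degree one less, tensored with the class of the curve. We expect this map to be diagonal in a suitable basis of $\Qlbar[\frg]^G$, with eigenvalues $\epsilon_{\mu,j}$ (Example \ref{eg:Edet}). There is also a constant term $(2g-2)b_\mu$, coming from the degree of $\cL_{\det}$ along the fibers paired against $\omega_C$. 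This gives the operator $D_\mu$ of \eqref{eq:diffintro}, acting on the Atiyah--Bott algebra.

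\textbf{Step 2: composition and commutation.} The iterated Hecke stack is a fiber product of single-leg Hecke stacks. By proper base change, $\Gamma_{c,\cL_{\det,I}}=\Gamma_{c,\lambda_1}\circ\cdots\circ\Gamma_{c,\lambda_r}$, using the IC-sheaf on the convolution, which is the external product up to the semismall map. Assumption \ref{assumption:paircomm} is exactly what lets us diagonalize all $D_{\lambda_i}$ simultaneously in a common basis of invariants. Without it, the eigenbases for different $\lambda_i$ might not agree (the triality/outer issue in type $D_{2m}$).

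\textbf{Step 3: trace.} Frobenius acts on the generators attached to $H^1(C)$, $H^0$ and $H^2$ with known eigenvalues. The graded trace of Frobenius on the free algebra then factorizes, as in the Harder/Atiyah--Bott point count, into $|\pi_1(G)|q^{\dim\Bun_G}\prod_j\zeta_C(d_j)$. The commuting derivations $\partial_{s_j}$ act by rescaling the weight of the $j$-th generator family. So inserting $\prod D_{\lambda_i}$ before the trace amounts to differentiating the deformed product $\prod_j\zeta_C(s_j+d_j)$ and evaluating at $s=0$; the factor $(\log q)^{-1}$ converts $q^{-s}$-derivatives.

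\textbf{Main obstacle.} The main obstacle is Step 1 for nonminuscule $\mu$. There one must justify the local equivariant integral over the singular $\Gr_{\leq\mu}$ with IC coefficients, and show that only the linear (derivation) part survives in top degree. For this we would try to express the integral via the geometric Satake/Ginzburg description of $IH^*_{L^+G}(\Gr_{\leq\mu})$ as $V_\mu$-related $\Gc$-data, which is also where the Langlands-dual formula for the eigenweights comes from.
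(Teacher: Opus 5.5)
Your architecture (single-leg operator acting as a derivation on Atiyah--Bott generators, composition over legs, Frobenius trace as in \cite{FYZvolume}) matches the paper's, but Step~1 asserts, rather than proves, the one genuinely new ingredient. The generators $\alpha^z$ of $H^*(\Bun_G)$ record the universal bundle at a moving point $y\in C$, not at the leg. So $\overline{\lh}^*\alpha^z$ is not pulled back from $\Hk_G^{\loc}$. Nothing reduces to $L^+G$-equivariant integrals over $\Gr_{G,\leq\mu}$ until you have the precise identity on $\Hk_G\times C$
\[\lf^*\alpha-\rf^*\alpha=\D_{\Hk,!}f_{\Hk}^*[\alpha],\qquad [\alpha]=\hbar^{-1}(\lh_{\loc}^*\alpha-\rh_{\loc}^*\alpha),\]
which is Theorem~\ref{thm:universalcommutator} (integrated over $z$, Corollary~\ref{cor:universalcommutator}). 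Saying the difference is ``divisible by a local class'' neither identifies the class nor explains why it is this one, and the identity is not formal. Restricting to the graph of the leg only recovers it after multiplication by $c_1(T_C)=(2-2g)\xi$, since $\D_{\Hk}^*$ is far from injective (for $g=1$ the check is empty); but the tautological classes see all K\"unneth components along $C$. In \cite{FYZvolume} the relative Hecke operators were computed with the Vinberg semigroup, which only works for minuscule $\mu$. The paper proves the identity in general as follows:
\begin{enumerate}
\item it is multiplicative in $\alpha$ and functorial along $G\to\GL_N$;
\item it is equivalent to an identity of module actions on the cohomology of $\Hk_G\times C$ with coefficients in pullbacks of perverse kernels $\cK$; this equivalence uses \cite{ginzburg1998loopgrassmanniancohomologyprincipal} and a freeness argument to show the relevant annihilator vanishes;
\item the module version is stable under convolution;
\item this reduces everything to $\IC_{(1,0,\dots,0)}$ for $\GL_N$, where \cite{FYZvolume} applies.
\end{enumerate}
Equivariant localization on $\Gr_{G,\leq\mu}$ does not touch this point. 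Your stated main obstacle is also misplaced: $\Gamma_v^{\loc}$ is defined sheaf-theoretically and $b_\lambda,\epsilon_{\lambda,j}$ are \emph{defined} as its invariants (Example~\ref{eg:Edet}), so no local integral needs to be evaluated for this theorem. The $V_\mu$-computation you describe only enters the separate dual-group formulas (Theorems~\ref{thm:Espectral} and~\ref{thm:bspectral}).

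The downstream steps also need repair before the trace argument applies. The operator is not a derivation ``modulo terms that vanish after integration.'' With the commutator identity one gets $\Gamma_v(\alpha^z)=(2g-2)b_v\,\alpha^z+\widetilde{\nabla}^{\loc}_v(\alpha)^z$. On products, however, the cross terms $\Gamma^{\loc}_{(\prod_{i\notin J}[\alpha_i])\cdot v}(1)^{\cap_{i\notin J}z_i}$ survive; they are only lower in the Ran filtration. Likewise, $\widetilde{\nabla}^{\loc}_v$ is a derivation only modulo $\hbar$ and preserves degree rather than lowering it. It acts through the eigenweights only on $\Gr^1_{\aug}$, i.e.\ on the Gross motive $\mathbb{V}$, not on a basis of $\Qlbar[\mathfrak{g}]^G$, where decomposable corrections can prevent diagonalization. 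What the trace needs is Lemma~\ref{lem:globalder} and Proposition~\ref{prop:grgammaformula}: $\Gamma_v$ preserves the Ran filtration, and its Ran-graded map is a derivation preserving the augmentation filtration, with $\Gr_{\aug}\Gr^{\Ran}\Gamma_v=(2g-2)b_v\,\id+\nabla_{E_v\otimes\id}$. Because Frobenius and all $\Gamma_{v_i}$ respect these filtrations, the trace can be computed on the associated graded, and only there does your Step~3 apply. Finally, Assumption~\ref{assumption:paircomm} gives simultaneous triangularization of the $E_{v_i}$, not diagonalization, which suffices for the trace. In Step~2 no convolution or semismallness enters: the legs are independent, so the composition identity is just base change plus the projection formula.
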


\subsection{New ingredients}\label{sec:whatsnew}
Theorem \ref{thm:maindiff} generalizes \cite[Theorem\,1.3.8]{FYZvolume} beyond the minuscule case. As is clear from the formulation, the main task in proving the theorem is to compute the operator $\Gamma_{c,\cL_{\det,I}}$ in \eqref{eq:gammaintrodet}, which we call \emph{(global) relative Hecke operators}. In \cite{FYZvolume}, the computation of relative Hecke operators relies on the Vinberg semigroup and the wonderful compactification of the adjoint group, which are only applicable in the minuscule case. 

Instead, we develop a new approach to compute relative Hecke operators: we study their local counterparts \eqref{eq:localhmap}, which we call \emph{local relative Hecke operators}, and then compute the global ones via a local--global compatibility result (Theorem \ref{thm:universalcommutator}). This approach has several advantages. First, it allows us to handle the non-minuscule case, and indeed all modification types in a uniform way. Second, our local models are directly related to the Satake category, which enables a uniform description of the eigenweights in terms of the Langlands dual group (achieved in \S\ref{sec:langlandsdual}). Finally, our approach is closely connected to the relative Langlands duality in the sense of \cite{BZSV}, and is directly comparable with the approach in \cite{liu2025higherperiodintegralsderivatives} which deals with the strongly tempered case. This perspective suggests a unified conceptual framework for understanding Gross--Zagier type formulas over function fields, namely, formulas in which higher derivatives of $L$-functions arise. See Remark \ref{rmk:BZSV} for further discussion of this connection.





\subsection{Notations}\label{sec:notations}
We use $C$ to denote a smooth geometrically connected projective curve defined over $\FF_q$. Let $\xi\in H^2(C)$ be the fundamental class of $C$. Let $D=\Spec \FF_q[\![t]\!]$ be the formal disc, and $\Aut(D)$ be the group of origin fixing automorphisms of $D$.

Throughout the paper, we work with a split semisimple connected algebraic group $G$. Fix a maximal torus and Borel subgroup $T\sub B\sub G$. We use $W$ to denote the Weyl group of $G$.

We use $L^+G, LG$ to denote the jet, loop group of $G$. We use $\Gr_G:=LG/L^+G$ to denote the affine Grassmannian, which is an ind-scheme. We define the local Hecke stack $\Hk_G^{\loc}:=(L^+G\bs \Gr_G)/\Aut(D)$. As a global counterpart, we have the global Hecke stack $\Hk_G$ which admits a map $\rh:\Hk_G\to \Bun_G\times C$ with each fiber isomorphic to $\Gr_G$.

For each $\lambda\in X_*(T)$, we have the closed Schubert cells $\Gr_{G,\leq\lambda}\sub \Gr_G$, $\Hk_{G,\leq \lambda}^{\loc}\sub \Hk_{G}^{\loc}$, $\Hk_{G,\leq\lambda}\sub \Hk_{G}$ defined such that $\Gr_{G,\leq\lambda}=\Gr_{G,\leq\lambda^+}$ where $\lambda^+\in X_*(T)_+$ is the unique dominant element in $W\cdot \lambda\sub X_*(T)$.

Define $k=\Qlbar$. For a prestack $X$. we use $\Shv(X)$ to denote the category of (ind-)constructible \'etale sheaves on $X$ with coefficient in $k$. We refer to \cite[\S4.1]{liu2025higherperiodintegralsderivatives} for a detailed definition of this category. We use $\uk_X\in \Shv(X)$ to denote the constant sheaf on $X$, and $\omega_X=(X\to *)^! k\in \Shv(X)$ to denote the dualizing sheaf on $X$ whenever the $!$-pullback functor is defined.  For a prestack $X$, we have its cochain complex $\Gamma(X):=\Gamma_{\textup{\'et}}(X_{\overline{\FF_q}},\uk)$. Define the cohomology ring $H^*(X):=\bigoplus_{d\in \ZZ} H^i\Gamma(X)$. We use $\langle n\rangle = [n](n/2)$ to denote the Tate twist by $n/2$ and cohomological shift by $n$.

Although this is not essential for our purposes, we work by default with $(\infty,1)$-categories. For a category $\mathcal{C}$ and objects $x,y \in \mathcal{C}$, we denote by $\Hom(x,y)$ the mapping space from $x$ to $y$, and write $\Hom^0(x,y) := \pi_0\big(\Hom(x,y)\big)$.

\subsection*{Acknowledgment}
Z.Wang would like to thank his advisor Zhiwei Yun for introducing this problem, sharing his insights, and providing constant support and encouragement. He is also grateful to Wei Zhang for valuable discussions related to this work. W.Wei would like to thank his advisor Tony Feng for introducing this problem and providing constant support.

\section{General formalism}\label{sec:generalformalism}
In this section, we set up a general framework for defining and computing the operator in \eqref{eq:gammaintrodet}, which we call \emph{(global) relative Hecke operators}. Namely, whenever the operator has a local origin (coming from a local volume datum as defined in Definition \ref{def:localvolumedatum}), we are able to compute it. The main result of this section is Theorem \ref{thm:main}, which generalizes Theorem \ref{thm:mainsame} and Theorem \ref{thm:maindiff}.

We refer to \S\ref{sec:notations} for basic notations.

\subsection{Volume data}
We first introduce the general setup that we can define and compute the relative Hecke operators. To summarize, whenever one has some local volume data (to be introduced in \S\ref{sec:localvoldata}), we will be able to define some relative Hecke operators (to be defined in \S\ref{sec:globalrelhecke}).

\subsubsection{Local volume data}\label{sec:localvoldata}
The local Hecke stack $\Hk_G^{\loc}$ introduced in \S\ref{sec:notations} fits into a correspondence \begin{equation}\label{diag:localhecke}\begin{tikzcd}
    {\mathbb{B}(L^+G\rtimes\Aut(D))}  & \Hk_{G}^{\loc} \ar[r, "\rh_{\loc}"] \ar[l, "\lh_{\loc}"'] & {\mathbb{B}(L^+G\rtimes\Aut(D))}
\end{tikzcd}\end{equation} where the two maps have clear meaning from the description $\Hk_G^{\loc}=(L^+G\rtimes\Aut(D))\backslash (LG\rtimes\Aut(D))/(L^+G\rtimes\Aut(D))$.

\begin{defn}\label{def:localvolumedatum}
    We define a \emph{local volume datum} to be a triple $v=(\cK,\frc^{\loc},\frd^{\loc})$ in which: \begin{itemize}
        \item $\cK\in\Shv(\Hk_G^{\loc})$ is a sheaf supported on $\Hk_{G,\leq\lambda}^{\loc}\sub \Hk_G^{\loc}$ for some $\lambda\in X_*(T)_+$.
        \item $\frc^{\loc}\in\Cor_{\Hk_{G}^{\loc},\cK\langle-d_{\frc}\rangle}(\uk_{\mathbb{B}(L^+G\rtimes\Aut(D))},\uk_{\mathbb{B}(L^+G\rtimes\Aut(D))})$ is a \emph{cohomological correspondence}.\footnote{We refer to \cite[\S4.2]{liu2025higherperiodintegralsderivatives} for a general treatment of cohomological correspondences with kernels. Here, one can forget about the interpretation as a cohomological correspondence and directly work with the Hom space.} Here, \begin{equation}\Cor_{\Hk_{G}^{\loc},\cK\langle-d_{\frc}\rangle}(\uk_{\mathbb{B}(L^+G\rtimes\Aut(D))},\uk_{\mathbb{B}(L^+G\rtimes\Aut(D))})=\Hom^0(\lh_{\loc,!}\cK\langle-d_{\frc}\rangle,\uk_{\mathbb{B}(L^+G\rtimes\Aut(D))})
.\end{equation} 
        \item $ \frd^{\loc}\in \Hom^0(\uk_{\mathbb{B}(L^+G\rtimes\Aut(D))},\rh_{\loc,!}\cK\langle d_{\frd}\rangle)$.\footnote{The element $\mathfrak{d}^{\loc}$ can be regarded as a cohomological correspondence with kernel the Verdier dual of $\cK$.}
    \end{itemize} Here, $d_{\frc},d_{\frd}\in\ZZ$ are some arbitrary integers, which are part of the datum. Define the \emph{degree} of the local volume datum to be the integer $d_v=d_{\frc}+d_{\frd}$.
\end{defn} 

\begin{example}[Local volume datum for determinant line bundle]\label{eg:localvolumedatumdet}
    For each dominant coweight $\lambda\in X_*(T)_+$, we have the local volume datum $v_{\l,\det}=(\IC_{\l}, \frc^{\loc}_{\l,\det},\frd^{\loc}_{\l})$ (see Definition \ref{def:localvolumedatumdet}).
\end{example}

\begin{example}[Cup product with a cohomology class]\label{eg:localvolumedatumcup}
    For each local volume datum $v=(\cK,\frc^{\loc},\frd^{\loc})$ and a cohomology class $\alpha \in H^{|\alpha|}(\Hk_G^{\loc})$, one can define a new local volume datum $\alpha\cdot v:=(\cK,\alpha\cup \frc^{\loc},\frd^{\loc})$ which has degree $d_v+|\alpha|$.
\end{example}

\subsubsection{Global volume data}
We now introduce global volume data.
\begin{defn}\label{def:globalvolumedatum}
    We define a \emph{global volume datum} to be a triple $v=(\cK,\frc,\frd)$ in which: \begin{itemize}
        \item $\cK\in\Shv(\Hk_G)$ is a sheaf supported on $\Hk_{G,\leq\lambda}\sub \Hk_G$ for some $\lambda\in X_*(T)_+$.
        \item $\frc\in\Cor_{\Hk_{G},\cK\langle-d_{\frc}\rangle}(\uk_{\Bun_G\times C},\uk_{\Bun_G\times C})$ is a cohomological correspondence. Here, \begin{equation}\Cor_{\Hk_{G},\cK\langle-d_{\frc}\rangle}(\uk_{\Bun_G\times C},\uk_{\Bun_G\times C})=\Hom^0(\lh_{!}\cK\langle-d_{\frc}\rangle,\uk_{\Bun_G\times C})
.\end{equation} 
        \item $ \frd\in \Hom^0(\uk_{\Bun_G\times C},\rh_{!}\cK\langle d_{\frd}\rangle)$.
    \end{itemize} Here, $d_{\frc},d_{\frd}\in\ZZ$ are some arbitrary integers, which are part of the datum. Define the degree of the global volume datum to be the integer $d_v=d_{\frc}+d_{\frd}$. Here, the maps $\lh,\rh$ are the top maps in \eqref{diag:localtoglo}.
\end{defn} 

\subsubsection{Local-to-global construction}\label{sec:local-to-globalvolume}
From a local volume datum $(\cK,\frc^{\loc},\frd^{\loc})$, we can construct a global volume datum $f^*v=(f_{\Hk}^*\cK,\frc,\frd)$ as follows: Consider the diagram \begin{equation}\label{diag:localtoglo}
    \begin{tikzcd}
        \Bun_G\times C \ar[d, "f"] & \Hk_{G} \ar[r, "\rh"] \ar[l, "\lh"'] \ar[d, "f_{\Hk}"] & \Bun_G\times C \ar[d, "f"] \\
        {\mathbb{B}(L^+G\rtimes\Aut(D))} & \Hk_{G}^{\loc}  \ar[r, "\rh_{\loc}"] \ar[l, "\lh_{\loc}"'] & {\mathbb{B}(L^+G\rtimes\Aut(D))}
    \end{tikzcd}
\end{equation} in which the vertical maps are given by restriction to the formal disc near the point on the curve $C$. The top horizontal row is the global counterpart of the correspondence \eqref{diag:localhecke}. Both squares in \eqref{diag:localtoglo} are Cartesian.

Define \begin{equation}
    \frc:=f^*\frc^{\loc}\in \Cor_{\Hk_{G},f_{\Hk}^*\cK\langle-d_{\frc}\rangle}(\uk_{\Bun_G\times C},\uk_{\Bun_G\times C})=\Hom^0(\lh_!f_{\Hk}^*\cK\langle-d_{\frc}\rangle,\uk_{\Bun_G\times C})
\end{equation} as the composition \[\lh_!f_{\Hk}^*\cK\langle-d_{\frc}\rangle\cong f^*\lh_{\loc,!}\cK\langle-d_{\frc}\rangle\xrightarrow{f^*\mathfrak{c}^{\loc}}f^*\uk_{\mathbb{B}(L^+G\rtimes\Aut(D))}\cong \uk_{\Bun_G\times C},\]

and \begin{equation}
    \frd:=f^*\frd^{\loc}\in \Hom^0(\uk_{\Bun_G\times C},\rh_!f_{\Hk}^*\cK\langle d_{\frd}\rangle)
\end{equation} as the composition \[\uk_{\Bun_G\times C}\cong f^*\uk_{\mathbb{B}(L^+G\rtimes\Aut(D))}\xrightarrow{f^*\mathfrak{d}^{\loc}} f^*\rh_{\loc,!}\langle d_{\frd}\rangle\cong \rh_!f_{\Hk}^*\cK\langle d_{\frd}\rangle.\] Throughout the article, we will always work with global volume data coming from local volume data as above.

\subsection{Relative Hecke operators}
In this section, we will introduce relative Hecke operators associated with volume data in both local and global settings.

\begin{itemize}
    \item In \S\ref{sec:globalrelhecke}, we introduce global relative Hecke operators, which will exactly generalize the operators in \eqref{eq:gammaintrodet}.
    \item In \S\ref{sec:localrelhecke}, we introduce local relative Hecke operators, which serve as local models for the global ones.
\end{itemize}
\subsubsection{Global relative Hecke operators}\label{sec:globalrelhecke}
Now we introduce global relative Hecke operators associated to a global volume datum $v=(\cK,\frc,\frd)$.

We have a map \begin{equation}\label{eq:globhmapc}
\Gamma_{c,v}^C: H^*_c(\Bun_G\times C)\xrightarrow{H_c^*(\frd)} H^{*+d_{\frd}}_c(\Hk_{G},\cK)\xrightarrow{H_c^*(\frc)} H^{*+d_v}_c(\Bun_G\times C)
.\end{equation} 

Since the (graded) dual of $H_c^*(\Bun_G\times C)$ is $H^*(\Bun_G\times C)(\dim\Bun_G+1)$. Ignoring the Tate twist, the (graded) dual of \eqref{eq:globhmapc} is \begin{equation}\label{eq:globhmap}
  \Gamma_{v}^C:  H^*(\Bun_G\times C)\xrightarrow{H^*(\frc)} H_{-(*+d_{\frc})}^{BM}(\Hk_G/\Bun_G\times C,\cK) \xrightarrow{H^{*}(\frd)} H^{*+d_v}(\Bun_G\times C)
\end{equation} in which $H_{-*}^{BM}(\Hk_G/\Bun_G\times C,\cK)=\Hom^*(\cK,\om_{\Hk_G/\Bun_G\times C})$. Here, we are using \[\om_{\Hk_G/\Bun_G\times C}=\lh^!\uk_{\Bun_G\times C}\cong \rh^!\uk_{\Bun_G\times C}.\]
The maps in \eqref{eq:globhmap} are defined by $H^*(\frc)=H_c^*(\frc)^*$, $H^*(\frd)=H_c^*(\frd)^*$. More explicitly, the first map can be identified with the map
\[\begin{split}H^*(\frc): H^*(\Bun_G\times C)&=\Hom^*(\uk_{\Bun_G\times C},\uk_{\Bun_G\times C})\\ &\to \Hom^*(\lh_!\cK\langle -d_{\frc}\rangle,\uk_{\Bun_G\times C}) \\ &\cong \Hom^*(\cK\langle -d_{\frc}\rangle,\lh^!\uk_{\Bun_G\times C})\\ &=H_{-(*+d_{\frc})}^{BM}(\Hk_G/\Bun_G\times C,\cK)\end{split}.\] The second map can be identified with \[\begin{split}
    H^*(\frd):H_{-*}^{BM}(\Hk_G/\Bun_G\times C,\cK)&\cong \Hom^*(\cK,\rh^!\uk_{\Bun_G\times C})\\& \cong \Hom^*(\rh_!\cK,\uk_{\Bun_G\times C}) \\  &\to \Hom^*(\uk_{\Bun_G\times C}\langle -d_{\frd}\rangle,\uk_{\Bun_G\times C})\\ &=H^{*+d_{\frd}}(\Bun_G\times C)
\end{split}.\]  The maps $H^*(\frc), H^*(\frd)$ are linear over $H^*(\Bun_G\times C)$. That is, for any $\alpha\in H^*(\Bun_G\times C)$, we have \begin{equation}
    H^*(\frc)(\alpha\cdot-)=(\lh^*\alpha)\cdot -
\end{equation} and \begin{equation}
    H^*(\frd)((\rh^*\alpha)\cdot -)=\alpha\cdot -
.\end{equation}

We further define the map \begin{equation}\label{eq:gammacdef}\Gamma_{c,v}:H^*_c(\Bun_G)\xrightarrow{\pr_1^*} H^*_c(\Bun_G\times C)\xrightarrow{\Gamma_{c,v}^C}H^{*+d_v}_c(\Bun_G\times C)\xrightarrow{\pr_{1,!}} H^{*+d_v-2}_c(\Bun_G)\end{equation} and its dual \begin{equation}\label{eq:gammadef}
    \Gamma_{v}:H^*(\Bun_G)\xrightarrow{\pr_1^*} H^*(\Bun_G\times C)\xrightarrow{\Gamma_{v}^C}H^{*+d_v}(\Bun_G\times C)\xrightarrow{\pr_{1,!}} H^{*+d_v-2}(\Bun_G).\end{equation}

    We call operators defined in \eqref{eq:globhmapc}\eqref{eq:globhmap}\eqref{eq:gammacdef}\eqref{eq:gammadef} \emph{global relative Hecke operators}.

\begin{example}\label{eg:globalrelheckeforvolume}
    When $v=f^*v_{\lambda,\det}$ where $v_{\lambda,\det}$ is in Example \ref{eg:localvolumedatumdet}, we have $\Gamma_{c,f^*v_{\lambda,\det}}=\Gamma_{c,\cL_{\det,I}}$ where the later is defined in \eqref{eq:gammaintrodet} in which one takes $r=1$ and $\lambda_I=(\lambda)$.
\end{example}

\subsubsection{Local relative Hecke operators}\label{sec:localrelhecke}
Now we introduce local relative Hecke operators associated to a local volume datum $v=(\cK,\frc^{\loc},\frd^{\loc})$ as defined in Definition \ref{def:localvolumedatum}. They will play an important role in the computation of global relative Hecke operators and in formulating the main theorem.

Note that \eqref{eq:globhmap} has a local counterpart 
\begin{equation}\label{eq:localhmap}
     \Gamma_{v}^{\loc}:   H^*(\mathbb{B}(L^+G\rtimes\Aut(D)))\xrightarrow{H^*(\frc^{\loc})} H_{-(*+d_{\frc})}^{BM}(\Hk_G^{\loc}/\mathbb{B}(L^+G\rtimes\Aut(D)),\cK) \xrightarrow{H^*(\frd^{\loc})} H^{*+d_v}(\mathbb{B}(L^+G\rtimes\Aut(D)))
.\end{equation} Here, the maps $H^*(\frc^{\loc})$ and $H^*(\frd^{\loc})$ are linear over $H^*(\mathbb{B}(L^+G\rtimes\Aut(D)))$. We call operators $\Gamma_v^{\loc}$ defined in \eqref{eq:localhmap} \emph{local relative Hecke operators}.

\subsection{Calculating local relative Hecke operators}
Now we study local relative Hecke operators in more detail.

\subsubsection{Cohomology of $\mathbb{B}(L^+G\rtimes\Aut(D))$}\label{sec:cohbg}
Let $R=H^*(\mathbb{B}T)$. Then $H^*(\mathbb{B}G)=R^W$ and $H^*(\mathbb{B}(L^+G\rtimes\Aut(D)))=R^W[\hbar]$, where $\hbar:=c_1(T_D)\in H^2([D/\Aut(D)])\cong H^2(\mathbb{B}\Aut(D))$. Here $T_D$ is the tangent bundle of $D$.

Note that $H^*(\mathbb{B}G)$ has a natural augmentation ideal $H^{>0}(\mathbb{B}G)\sub H^*(\mathbb{B}G)$. This defines a decreasing \emph{augmentation filtration} $\{F_{\aug}^{\bl}H^*(\mathbb{B}G) \}_{\bl\in \ZZ_{\geq 0}}$ on $H^*(\mathbb{B}G)$ such that \[F_{\aug}^{i}H^*(\mathbb{B}G):=(H^{>0}(\mathbb{B}G))^i\sub  H^{*}(\mathbb{B}G).\] Define the \emph{Gross motive} \begin{equation}\label{eq:grossmotive}\mathbb{V}:=\Gr_{\aug}^1H^*(\mathbb{B}G).\end{equation} We have a canonical isomorphism $\Gr^{\bl}_{\aug}(H^*(\mathbb{B}G))\cong \Sym^{\bl}(\mathbb{V})$.


\subsubsection{Local relative Hecke operator on associated graded}
We keep working with a local volume datum $v=(\cK,\frc^{\loc},\frd^{\loc})$ and assume $d_v=2$.

Consider maps \[\lh^*_{\loc},\rh_{\loc}^*:H^*(\mathbb{B}(L^+G\rtimes\Aut(D)))\to H^*(\Hk_{G}^{\loc})\] where $\lh_{\loc},\rh_{\loc}$ are the bottom maps in \eqref{diag:localtoglo}. For $\alpha\in H^{|\alpha|}(\mathbb{B}(L^+G\rtimes\Aut(D)))$, define \begin{equation}\label{eq:localcommutator} [\alpha]=\hbar^{-1}\cdot (\lh_{\loc}^*\alpha-\rh^*_{\loc}\alpha)\in H^{|\alpha|-2}(\Hk_G^{\loc}). \end{equation} Here, we are using the fact that $H^*(\Hk_G^{\loc})$ is a flat $k[\hbar]$-module.

Considering \begin{equation}\label{eq:nablatildeloc}\widetilde{\nabla}_v^{\loc}:= \Gamma_{[-]\cdot v}^{\loc}(1) \in \End(H^*(\mathbb{B}(L^+G\rtimes\Aut(D))))\end{equation} where the dot product in $[-]\cdot v$ is understood as in Example \ref{eg:localvolumedatumcup}. Since $d_v=2$, we know $\Gamma_v^{\loc}(1)\in H^2(\mathbb{B}(L^+G\rtimes\Aut(D)))=k\cdot \hbar$. Write \begin{equation}\label{eq:bdef}
    \Gamma_v^{\loc}(1) = -b_v\hbar
\end{equation} for $b_v\in k$.

For any $\alpha \in H^*(\mathbb{B}(L^+G\rtimes\Aut(D)))$, note that \begin{equation}\label{eq:gammaformulaloc1}
\begin{split}
    \Gamma_v^{\loc}(\alpha) & = H^*(\frd^{\loc})\circ H^*(\frc^{\loc})(\alpha) \\
    & = H^*(\frd^{\loc})((\lh_{\loc})^*(\alpha)H^*(\frc^{\loc})1) \\
    & = H^*(\frd^{\loc})((\rh_{\loc})^*(\alpha)+\hbar[\alpha]H^*(\frc^{\loc})1) \\
    & = \alpha\Gamma_v^{\loc}(1)+\hbar\cdot\Gamma_{[\alpha]\cdot v}^{\loc}(1) \\
    & = -b_v\hbar\cdot \alpha+\hbar\cdot\widetilde{\nabla}_v^{\loc}(\alpha)
    \end{split}
.\end{equation}

Define $\nabla_v^{\loc}:=\widetilde{\nabla}_v^{\loc}\otimes_{k[\hbar]}k\in \End(H^*(\mathbb{B}G))$. We have the following important observation:

\begin{lemma}\label{lem:localder}
    The map $\nabla_v^{\loc}:H^*(\mathbb{B}G)\to H^*(\mathbb{B}G)$ is a derivation. Moreover, it preserves the natural augmentation filtration $F_{\mathrm{aug}}^{\bl}H^*(\mathbb{B}G)$ defined in \S\ref{sec:cohbg}.
\end{lemma}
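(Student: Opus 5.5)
The plan is to first show that the bracket $[-]$ of \eqref{eq:localcommutator} satisfies a twisted Leibniz rule, and then deduce both statements from the $H^*(\mathbb{B}(L^+G\rtimes\Aut(D)))$-linearity of $H^*(\frd^{\loc})$ and $H^*(\frc^{\loc})$ after reducing modulo $\hbar$.

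\textbf{Step 1: Leibniz rule for the bracket.} For $\alpha,\beta\in H^*(\mathbb{B}(L^+G\rtimes\Aut(D)))$, expanding the product gives
\[
\lh_{\loc}^*(\alpha\beta)-\rh_{\loc}^*(\alpha\beta)=\lh_{\loc}^*\alpha\,(\lh_{\loc}^*\beta-\rh_{\loc}^*\beta)+(\lh_{\loc}^*\alpha-\rh_{\loc}^*\alpha)\,\rh_{\loc}^*\beta .
\]
Dividing by $\hbar$, which is legitimate because $H^*(\Hk_G^{\loc})$ is $k[\hbar]$-flat, yields
\[
[\alpha\beta]=\lh_{\loc}^*\alpha\cdot[\beta]+[\alpha]\cdot\rh_{\loc}^*\beta .
\]

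\textbf{Step 2: pass to $\widetilde\nabla$.} By definition $\widetilde\nabla^{\loc}_v(\alpha\beta)=H^*(\frd^{\loc})\bigl([\alpha\beta]\cdot H^*(\frc^{\loc})(1)\bigr)$, where the cup product is taken in Borel--Moore homology. We treat the two Leibniz terms separately.
\begin{itemize}
\item In the second term $\rh_{\loc}^*\beta$ appears, so the $\rh$-linearity of $H^*(\frd^{\loc})$ pulls $\beta$ out and gives $\widetilde\nabla_v^{\loc}(\alpha)\,\beta$, up to a Koszul sign. All classes here have even degree, so there are no signs.
\item In the first term, write $\lh_{\loc}^*\alpha=\rh_{\loc}^*\alpha+\hbar[\alpha]$. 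This gives $\alpha\,\widetilde\nabla_v^{\loc}(\beta)+\hbar\,H^*(\frd^{\loc})\bigl([\alpha][\beta]\cdot H^*(\frc^{\loc})1\bigr)$.
\end{itemize}
The extra term is divisible by $\hbar$, so it vanishes after applying $\otimes_{k[\hbar]}k$. We also need $\widetilde\nabla_v^{\loc}$ to be $k[\hbar]$-linear, so that it descends to $H^*(\mathbb{B}G)=R^W[\hbar]/\hbar$. This holds because $[\hbar]=0$: the class $\hbar$ is pulled back from $\mathbb{B}\Aut(D)$, and both maps to $\mathbb{B}\Aut(D)$ agree. Hence $\nabla_v^{\loc}(\alpha\beta)=\nabla_v^{\loc}(\alpha)\beta+\alpha\nabla_v^{\loc}(\beta)$. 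Moreover $\nabla_v^{\loc}$ lowers cohomological degree by $0$: $[\alpha]$ has degree $|\alpha|-2$ and $d_v=2$.

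\textbf{Step 3: filtration.} A derivation $\nabla$ that kills $1$ (since $[1]=0$) and preserves degree maps $H^{>0}(\mathbb{B}G)$ into $H^{>0}(\mathbb{B}G)$, because $\nabla$ is degree-preserving and $H^0=k$. The Leibniz rule then gives
\[
\nabla\bigl((H^{>0})^i\bigr)\subset (H^{>0})^{i-1}\cdot\nabla(H^{>0})\subset (H^{>0})^i ,
\]
so $\nabla$ preserves $F^{\bl}_{\aug}$.

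\textbf{Main obstacle.} The main subtlety is the linearity bookkeeping in Step 2. One must check that the cup product of $[\alpha]\in H^*(\Hk^{\loc}_G)$ with the Borel--Moore class $H^*(\frc^{\loc})(1)$ is compatible with the module structures used for $\frd^{\loc}$, and that the identity $H^*(\frd^{\loc})((\rh_{\loc}^*\beta)\cdot -)=\beta\cdot-$ holds locally. Both follow from the projection formula, exactly as in the global case.
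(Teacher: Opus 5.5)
Your proposal is correct and follows the paper's route. The paper's proof is exactly the identity $\widetilde{\nabla}_v^{\loc}(\alpha\beta)=\alpha\widetilde{\nabla}_v^{\loc}(\beta)+\widetilde{\nabla}_v^{\loc}(\alpha)\beta+\hbar\Gamma_{[\alpha][\beta]\cdot v}^{\loc}(1)$ that you derive in Steps 1--2, followed by reduction modulo $\hbar$. Your extra checks (that $k[\hbar]$-linearity follows from $[\hbar]=0$, that degree is preserved because $d_v=2$, and the deduction of filtration preservation) spell out what the paper leaves implicit.
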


\begin{proof}
    Both follow from the computation that \begin{equation}\label{eq:gammaformulaloc} \widetilde{\nabla}_v^{\loc}(\alpha\beta)=\alpha\widetilde{\nabla}_v^{\loc}(\beta)+\widetilde{\nabla}_v^{\loc}(\alpha)\beta+\hbar\Gamma_{[\alpha][\beta]\cdot v}^{\loc}(1) \end{equation} for any $\alpha,\beta\in H^*(\mathbb{B}(L^+G\rtimes\Aut(D)))$.
\end{proof}

Given Lemma \ref{lem:localder}, it is natural to consider
\begin{equation}\label{eq:tmap} E_v:=\Gr^1_{\mathrm{aug}}\nabla_v^{\loc}:\mathbb{V}\to\mathbb{V}.\end{equation} We would like to call this the \emph{relative Hecke operator on Gross motive}. Its eigenvalues form a (multi-)set \begin{equation}\label{eq:eigenweightsset}\{\epsilon_{v,i} \mid 1\leq i\leq \dim\mathbb{V}\} \end{equation} and are called \emph{eigenweights} of $v$. See \S\ref{sec:eigenweights} for an ordering of the eigenweights when $G$ is almost simple.

We have the following proposition to be compared with its global counterpart, Proposition \ref{prop:grgammaformula}:
\begin{prop}\label{prop:grgammaformulaloc}
    The induced map $\overline{\Gamma}_v^{\loc}:=(\hbar^{-1}\Gamma_v^{\loc})\otimes_{k[\hbar]}k$ preserves the filtration  $F_{\mathrm{aug}}^{\bl}H^*(\mathbb{B}G)$. Under the canonical isomorphism $H^*(\mathbb{B}G)\cong \Sym^{\bl}(\mathbb{V})$, we have $\Gr^{\bl}_{\aug}\overline{\Gamma}_v^{\loc}=-b_v\cdot \id + \nabla_{E_v}\in \End(\Sym^{\bl}(\mathbb{V}))$. Here $\nabla_{E_v}$ is the derivation on $\Sym^{\bl}(\mathbb{V})$ such that $\nabla_{E_v}|_{\mathbb{V}}=E_v$.
\end{prop}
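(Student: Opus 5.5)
Everything follows from formula \eqref{eq:gammaformulaloc1} together with Lemma \ref{lem:localder}. By \eqref{eq:gammaformulaloc1}, for every $\alpha\in H^*(\mathbb{B}(L^+G\rtimes\Aut(D)))=R^W[\hbar]$ we have $\Gamma_v^{\loc}(\alpha)=\hbar\bigl(-b_v\alpha+\widetilde{\nabla}_v^{\loc}(\alpha)\bigr)$. The image of $\Gamma_v^{\loc}$ is therefore divisible by $\hbar$, and since $R^W[\hbar]$ is $\hbar$-torsion free, $\hbar^{-1}\Gamma_v^{\loc}=-b_v\cdot\id+\widetilde{\nabla}_v^{\loc}$ is a well-defined $k[\hbar]$-linear endomorphism. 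Linearity over $k[\hbar]$ holds because the maps $H^*(\frc^{\loc})$ and $H^*(\frd^{\loc})$ are linear over $H^*(\mathbb{B}(L^+G\rtimes\Aut(D)))$, which contains $\hbar$. Reducing modulo $\hbar$ gives
\[
\overline{\Gamma}_v^{\loc}=-b_v\cdot\id+\nabla_v^{\loc}\in\End(H^*(\mathbb{B}G)).
\]

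Next I use Lemma \ref{lem:localder}: $\nabla_v^{\loc}$ is a derivation of $H^*(\mathbb{B}G)$ preserving $F^{\bl}_{\aug}$. The identity also preserves the filtration, so $\overline{\Gamma}_v^{\loc}$ preserves $F^{\bl}_{\aug}$. This gives the first claim, and it also shows that $\Gr^{\bl}_{\aug}\overline{\Gamma}_v^{\loc}=-b_v\cdot\id+\Gr^{\bl}_{\aug}\nabla_v^{\loc}$.

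It remains to identify $\Gr^{\bl}_{\aug}\nabla_v^{\loc}$ with $\nabla_{E_v}$. The associated graded of a filtration-preserving derivation of a filtered algebra is a degree-$0$ derivation of the graded algebra $\Gr^{\bl}_{\aug}H^*(\mathbb{B}G)$. To see this, take $x\in F^i$ and $y\in F^j$. Then $\nabla(xy)=\nabla(x)y+x\nabla(y)$, and the classes of both sides in $F^{i+j}/F^{i+j+1}$ satisfy the Leibniz rule with respect to the induced product. Under the canonical isomorphism $\Gr^{\bl}_{\aug}H^*(\mathbb{B}G)\cong\Sym^{\bl}(\mathbb{V})$, we thus get a derivation of $\Sym^{\bl}(\mathbb{V})$. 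Its restriction to $\mathbb{V}=\Gr^1_{\aug}$ is $E_v$ by the definition \eqref{eq:tmap}.

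A derivation of $\Sym^{\bl}(\mathbb{V})$ is determined by its values on the generators $\mathbb{V}$. Hence $\Gr^{\bl}_{\aug}\nabla_v^{\loc}=\nabla_{E_v}$, which completes the proof.

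The only point needing care is the passage to $\hbar^{-1}$ and the reduction $\otimes_{k[\hbar]}k$, that is, that reduction commutes with the formula. This holds because both $\Gamma_v^{\loc}$ and $\widetilde{\nabla}_v^{\loc}$ are $k[\hbar]$-linear and the module $R^W[\hbar]$ is free over $k[\hbar]$. I also have to be careful that the isomorphism $\Gr_{\aug}\cong\Sym(\mathbb{V})$ is an algebra isomorphism, so that the Leibniz compatibility transfers. I expect no genuine obstacle beyond this bookkeeping.
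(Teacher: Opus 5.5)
Your argument is correct and is the intended one. The paper states this proposition without a separate proof, as an immediate consequence of \eqref{eq:gammaformulaloc1} and Lemma \ref{lem:localder}, with $\Gr^{\bl}_{\aug}\nabla_v^{\loc}$ determined by its restriction $E_v$ to the generators $\mathbb{V}$.

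One small precision: $H^*(\frc^{\loc})$ and $H^*(\frd^{\loc})$ are linear through $\lh_{\loc}^*$ and $\rh_{\loc}^*$ respectively. So the $k[\hbar]$-linearity of $\Gamma_v^{\loc}$ really comes from $\lh_{\loc}^*\hbar=\rh_{\loc}^*\hbar$, which holds because both maps lie over $\mathbb{B}\Aut(D)$. It does not come from linearity over the whole ring, which fails.
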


\begin{example}\label{eg:Edet}
When $v=v_{\lambda,\det}$ as in Example \ref{eg:localvolumedatumdet}, we have $\epsilon_{\lambda,i}=\epsilon_{v_{\lambda,\det},i}$ and $b_{\lambda} = b_{v_{\lambda,\det}}$. This gives the numbers involved in Theorem \ref{thm:mainsame}.
\end{example}

\subsection{Volume of Shtukas: a general formulation}
Now we formulate a general version of Theorem \ref{thm:maindiff}.

\subsubsection{Definition of volume}
For each local volume datum $v=(\cK,\frc^{\loc},\frd^{\loc})$ as in Definition \ref{def:localvolumedatum} and $e\in\pi_1(G)\cong \pi_0(\Bun_G)$, assuming $\cK\in\Shv(\Hk_{G}^{\loc,e_{\cK}})$ for some $e_{\cK}\in\pi_1(G)$,\footnote{One has connected components decomposition $\Hk_G^{\loc}=\coprod_{e\in \pi_1(G)}\Hk_G^{\loc,e}$} we consider the corresponding global volume datum $f^*v$ as defined in \S\ref{sec:local-to-globalvolume}.

We have the global relative Hecke operators \begin{equation}
    \Gamma_{c,f^*v}^e:H^*_c(\Bun_G^e)\to H^{*+d_v-2}_c(\Bun_G^{e+e_{\cK}})
\end{equation} and \begin{equation}
    \Gamma_{f^*v}^e: H^{*}(\Bun_G^{e+e_{\cK}})\to H^{*+d_v-2}(\Bun_G^{e})
\end{equation} which are given by restricting those in \S\ref{sec:globalrelhecke} to one connected component.

Fix $I=\{1,2,\cdots,r\}$ for some $r\in\ZZ_{\geq 0}$ and choose a sequence of local volume data $v_I:=(v_i=(\cK_i\in\Shv(\Hk_G^{\loc,e_i}),\frc^{\loc}_i,\frd^{\loc}_i))_{i\in I}$ such that $d_{v_i}=d_{\frc_i}+d_{\frd_i}=2$ for any $i\in I$. Let $e_{v_I}=\sum_i e_{i}\in\pi_1(G)$. Define the global relative Hecke operator attached to $v_I$ \begin{equation}\Gamma_{c,f^*v_I}^e:=\Gamma_{c,f^*v_1}^{e+e_{v_I}-e_1}\circ\cdots\circ\Gamma_{c,f^*v_r}^e:H^*_c(\Bun_G^e)\to H_c^*(\Bun_G^{e+e_{v_I}})\end{equation} and its dual \begin{equation}
    \Gamma_{f^*v_I}^e:=\Gamma_{f^*v_r}^e\circ\cdots\circ\Gamma_{f^*v_1}^{e+e_{v_I}-e_1} : H^*(\Bun_G^{e+e_{v_I}})\to H^*(\Bun_G^e)
.\end{equation} 

Moreover, we make the following assumption:
\begin{assumption}\label{assumption:rational}
    Each local volume datum $v_i$ is defined over a subfield $E\sub k$ such that $E/\QQ$ is a finite extension. That is, the sheaf $\cK_i$ is defined over $E$ and the maps $\frc_i^{\loc},\frd_i^{\loc}$ are defined over $E$.
\end{assumption}

Define \begin{equation}
    \mathrm{vol}(\Sht_{G,I}^e,f^*v_I):=\begin{cases}
    0 &, e_{v_I}\neq 0 \\
        \tr(\Frob\circ \Gamma_{c,f^*v_I}^e,H_c^*(\Bun_G^e)) &, e_{v_I}=0
    \end{cases}
.\end{equation} 

One can also define the volume for all connected components \begin{equation}\mathrm{vol}(\Sht_{G,I},f^*v_I):=\tr(\Frob\circ \Gamma_{c,v_I},H_c^*(\Bun_G)).\end{equation} Then one has $\mathrm{vol}(\Sht_{G,I},f^*v_I)=\sum_{e\in \pi_1(G)}\mathrm{vol}(\Sht_{G,I}^e,f^*v_I)$.

\begin{remark}\label{rmk:converg}
The trace above should be understood as the summation
$
\tr(\Frob\circ \Gamma_{c,f^*v_I},H^*_c(\Bun_G))=\sum_{i\in\ZZ}\tr(\Frob\circ \Gamma_{c,f^*v_I},H^i_c(\Bun_G)),
$
in which each vector space $H^i_c(\Bun_G)$ is finite-dimensional. Under Assumption \ref{assumption:rational}, this sum is absolutely convergent under any identification $\overline{\mathbb{Q}}_\ell\cong \mathbb{C}$. This can be proved in the same way as \cite[Proposition\,5.5.3]{FYZvolume}.
\end{remark}

\begin{example}\label{eg:vol=volgen}
    When taking $v_I=(v_{\lambda_1,\det},\cdots,v_{\lambda_r,\det})$ where each term is as in Example \ref{eg:localvolumedatumdet} for $\lambda_I\in X_*(T)^r$. We have $\mathrm{vol}(\Sht_{G,I},f^*v_I)=\mathrm{vol}(\Sht_{G,\leq\lambda_I},f_{\Sht,I}^*\cL_{\det,I})$ where the later is considered in Theorem \ref{thm:maindiff}.
\end{example}

\subsubsection{Ordering the eigenweights}\label{sec:eigenweights}
Recall the (multi-)set of eigenweights
$
\{\epsilon_{v,i}\mid 1\leq i\leq n\}
$ defined in \eqref{eq:eigenweightsset}. When $G$ is almost simple, we refine this multiset by partially introducing an ordering using the decomposition $\mathbb{V}=\bigoplus_{i=1}^n\overline{\mathbb{Q}}_\ell(-d_i)$, where $\{d_i\mid 1\leq i\leq n\}$ are as in Theorem \ref{thm:mainsame}.

 When $G$ is not of type $D_{n}$ for $n$ even, the numbers $\{d_i, 1\leq i\leq n\}$ are distinct and we order them such that $d_1<d_2<\cdots<d_n$. Define $\mathbb{V}_i:=\Qlbar(-d_i)$, and we have $\mathbb{V}=\bigoplus_{i=1}^n\mathbb{V}_i$. We define $E_{v}|_{\mathbb{V}_i}=-\epsilon_{v,i}\cdot\id_{\mathbb{V}_i}$. In this case, the matrix $E_{v}$ is diagonal under the decomposition $\mathbb{V}=\bigoplus_{i=1}^n\mathbb{V}_i$.

When $G$ is of type $D_n$ for $n$ even, we take $d_i=2i$ for $1\leq i\leq n-1$ and $d_n=n$. When $i\neq n/2,n$, we define $\mathbb{V}_i:=\Qlbar(-d_i)$. When $i\in \{n/2,n\}$, the degree $n$ part of $\mathbb{V}$ is a two dimensional vector space. We do the following instead: The outer automorphism group $S_2\cong \mathrm{Out}(G)$ acts on $\mathbb{V}$. We define $\mathbb{V}_{n/2}$ as the $S_2$-invariant part of the degree $n$ part of $\mathbb{V}$, and define $\mathbb{V}_n$ as the part that $S_2$ acts via the non-trivial character.\footnote{The one-dimensional vector space $\mathbb{V}_n$ is spanned by the \emph{Pfaffian}.} We still have $\mathbb{V}=\bigoplus_{i=1}^n\mathbb{V}_i$. For $i\neq n/2,n$, we define $E_{v}|_{\mathbb{V}_i}=-\epsilon_{v,i}\cdot\id_{\mathbb{V}_i}$. When $i\in \{n/2,n\}$, we only define the (multi-)set $\{\epsilon_{v,n/2},\epsilon_{v,n}\}$ as the set of eigenvalues of the $2\times2$-matrix $-E_{v}|_{\mathbb{V}_{n/2}\oplus\mathbb{V}_{n}}$. In this case, the matrix $E_{v}$ is block diagonal under the decomposition $\mathbb{V}=\bigoplus_{i=1}^n\mathbb{V}_i$ with a unique $2\times 2$-block $E_{v}|_{\mathbb{V}_{n/2}\oplus \mathbb{V}_n}$.

\subsubsection{Main result}
Now we formulate a general version of Theorem \ref{thm:maindiff}. Take $v_I$ as before. We make the following assumption.
\begin{assumption}\label{assumption:paircomm}
    We assume that $\{E_{v_i}\in\End(\mathbb{V})\}_{i\in I}$ pairwise commute. Here $E_{v_i}$ are defined in \eqref{eq:tmap}.
\end{assumption}

Note that Assumption \ref{assumption:paircomm} is an empty assumption when $G$ is not of type $D_n$ for $n$ even. When $G$ is of type $D_n$ for $n$ even, Assumption \ref{assumption:paircomm} amounts to requiring the $2\times 2$-matrices $E_{v_i}|_{\mathbb{V}_{n/2}\oplus \mathbb{V}_n}$ to be mutually commutative. Under this assumption, we can simultaneously upper-triangularize the matrices $E_{v_i}|_{\mathbb{V}_{n/2}\oplus \mathbb{V}_n}$. We can order each set $\{\epsilon_{v_i,n/2},\epsilon_{v_i,n}\}$ such that each number $\epsilon_{v_i,n}$ is an eigenvalue for a joint eigenvector of all $E_{v_i}|_{\mathbb{V}_{n/2}\oplus \mathbb{V}_n}$.

Define the differential operator on $\mathbb{R}^n$ \begin{equation}\label{eq:diff} D_{v_i}=(2g-2)b_{v_i}-(\log q)^{-1}\sum_{j=1}^n\epsilon_{v_i,j}\partial_{s_j}\end{equation} where $b_{v_i}$ are defined in \eqref{eq:bdef}.


\begin{thm}\label{thm:main}
    Let $v_I=\{v_i\}_{i=1,\cdots,n}$ be a collection of local volume data such that $d_{v_i}=2$ for each $i\in I$ and $e_{v_I}=0$. Under Assumption \ref{assumption:paircomm} and Assumption \ref{assumption:rational}, for each $e\in\pi_1(G)$, we have \begin{equation}
        \mathrm{vol}(\Sht_{G,I}^e,f^*v_I)=q^{\dim\Bun_G} \big((\prod_{i=1}^r D_{v_i})\mathscr{L}_{C,G}(s_1,\cdots,s_n)\big)\big|_{s_1=\cdots=s_n=0}
    .\end{equation}
\end{thm}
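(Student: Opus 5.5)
We follow the strategy of \cite{FYZvolume}, with the global relative Hecke operator computed through its local model. First we compute $\Gamma_{f^*v}$ on $H^*(\Bun_G)$ for a single local volume datum $v$ with $d_v=2$; the statement for $v_I$ then follows by composing. Recall the Atiyah--Bott description: $H^*(\Bun_G^e)$ is (up to twists) the free graded-commutative algebra generated by $\mathbb{V}\otimes H^*(C)$, with each generator $x\in\mathbb{V}$ of degree $d$ producing Künneth components $x\otimes\gamma$ for $\gamma$ in $H^0(C)$, $H^1(C)$, $H^2(C)$. These arise from the map $\Bun_G\times C\to\mathbb{B}G$ by slanting against $H_*(C)$. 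Moreover $f^*$ sends $\hbar$ to $c_1(T_C)$, pulled back along the projection to $C$, which has degree $2-2g$.

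The central step is a local--global compatibility statement (the forthcoming Theorem~\ref{thm:universalcommutator}). For $\alpha\in H^*(\mathbb{B}(L^+G\rtimes\Aut(D)))$, the class $f_{\Hk}^*[\alpha]$ should govern the commutator $\lh^*f^*\alpha-\rh^*f^*\alpha$ on $\Hk_G$, refined to keep track of the $C$-direction. Combining this with \eqref{eq:gammaformulaloc1} pulled back along $f$, together with the $H^*(\Bun_G\times C)$-linearity of $H^*(\frc)$ and $H^*(\frd)$, should prove the global analogue of Proposition~\ref{prop:grgammaformulaloc}, namely Proposition~\ref{prop:grgammaformula}. It asserts that $\Gamma_{f^*v}$ preserves the augmentation filtration of $H^*(\Bun_G)$ induced by the generators, and that on the associated graded $\Sym(\mathbb{V}\otimes H^*(C))$ it equals
\[
(2g-2)\,b_v\cdot\id \;+\;\text{the derivation extending } -E_v\otimes\id_{H^*(C)}.
\]
The constant $(2g-2)b_v$ comes from integrating $-b_v\hbar$ over $C$ via $\pr_{1,!}$, and the derivation part comes from $\widetilde{\nabla}^{\loc}_v$ acting on the generators.

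I expect the main obstacle to be precisely this local--global statement. The difficulty is to make $[\alpha]$ globalize compatibly with the family over $C$ and to control the error terms $\hbar\,\Gamma_{[\alpha][\beta]\cdot v}$, which should land in strictly higher filtration after applying $\pr_{1,!}$. The most delicate part is the effect on the generators coming from $H^1(C)$ and $H^2(C)$. To handle it I would first treat the universal case over $\mathbb{B}\Aut(D)$ and then use that the $C$-direction is modelled by the Atiyah bundle.

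Given the graded formula, the trace is computed as follows. By Assumption~\ref{assumption:paircomm} the operators $E_{v_i}$ can be simultaneously triangularized, so on $\Sym(\mathbb{V}\otimes H^*(C))$ the composite $\Gamma_{f^*v_I}$ is triangular with respect to a filtration whose graded pieces are Frobenius-stable. The trace of $\Frob\circ\Gamma$ therefore equals the trace of its graded version. Write $\mathbb{V}=\bigoplus_j\mathbb{V}_j$, with $\Frob$ acting on $\mathbb{V}_j$ by $q^{d_j}$ and the $E_{v_i}$ acting by $-\epsilon_{v_i,j}$. Introduce variables $s_j$ so that $\Frob$ on $\mathbb{V}_j$ is deformed to $q^{d_j+s_j}$. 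The generating trace of $\Frob$ on $\Sym(\mathbb{V}_j\otimes H^*(C))$, taken with Koszul signs, is then $\zeta_C(s_j+d_j)$, up to the twist by $q^{\dim\Bun_G}$ arising from Poincaré duality between $H_c^*$ and $H^*$. Inserting the derivation with eigenvalue $-\epsilon$ on a monomial of weight $m_j$ in $\mathbb{V}_j$ produces the factor $-\epsilon\,m_j$, and this is exactly $-(\log q)^{-1}\epsilon\,\partial_{s_j}$ evaluated at $s=0$. Hence each $\Gamma_{f^*v_i}$ becomes the operator $D_{v_i}$, and summing over the component $e$ (using $e_{v_I}=0$) gives the stated formula. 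The absolute convergence needed to interchange the infinite sums with differentiation is supplied by Remark~\ref{rmk:converg}.
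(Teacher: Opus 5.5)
Your architecture matches the paper's: the commutator relation of Theorem~\ref{thm:universalcommutator}, then formula \eqref{eq:gammaformula} for $\Gamma_v$ on tautological monomials, then a graded formula, then the trace argument of \cite[\S5.6.18]{FYZvolume}. However, the graded statement in the middle is false as you formulate it, and your trace step depends on it.

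You assert that $\Gamma_{f^*v}$ preserves the augmentation filtration of $H^*(\Bun_G^e)$, and that on $\Gr_{\aug}$ it equals $(2g-2)b_v$ plus a derivation. By \eqref{eq:gammaformula}, $\Gamma_v(\prod_{i\in I}\alpha_i^{z_i})$ contains, for every $J$ with $|I\setminus J|\geq 2$, the term $(\prod_{j\in J}\alpha_j^{z_j})\,\Gamma^{\loc}_{(\prod_{i\notin J}[\alpha_i])\cdot v}(1)^{\cap_{i\notin J}z_i}$. This replaces $|I\setminus J|$ tautological factors by a single one, so it lowers the augmentation degree.

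For instance, take $G=\PGL_2$, $\lambda$ minuscule, $v=v_{\lambda,\det}$ and $p\in H^4(\mathbb{B}G)$ the generator. The $\mathbb{P}^1$-bundle model gives $\Gamma^{\loc}_{[p]^2\cdot v}(1)=-4\hbar p-\hbar^3$. Hence $\Gamma_v\bigl((p^{[C]})^2\bigr)$ contains $8(g-1)\,p^{\mathrm{pt}}$, which lies in $F^1_{\aug}$ but not in $F^2_{\aug}$ when $g\neq 1$.

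At the same time, decomposable terms of $\widetilde{\nabla}^{\loc}_v(\alpha)$ produce components that raise the augmentation degree. So $\Gamma_v$ is not triangular for $F^{\bullet}_{\aug}$ in either direction, and ``trace $=$ trace on $\Gr_{\aug}$'' is unjustified. Your heuristic for discarding these terms is also the wrong globalization. You argue they carry an $\hbar$, which globalizes to $c_1(T_C)$ and pushes them into higher filtration. In the global formula there is no $\hbar$; instead the diagonal class $\D_{\Hk,!}1$ contributes $\PD(z_{i_1})\cup\cdots\cup\PD(z_{i_k})$.

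The missing ingredient is the Ran filtration of \S\ref{sec:Ranfil}. This is the increasing filtration by the total homological degree $\sum_s|z_s|$ of tautological monomials, and it is the global counterpart of reducing modulo $\hbar$. Since $|z\cap z'|=|z|+|z'|-2$, the multi-commutator terms above drop the Ran degree by $2(|I\setminus J|-1)$, and terms of the form $(\hbar\gamma)^z$ drop it by $2$. It follows that:
\begin{itemize}
\item $\Gamma_v$ preserves $F^{\Ran}_\bullet$;
\item $\Gr^{\Ran}_\bullet\Gamma_v$ is a constant plus a derivation, and it is this operator that preserves $F^{\bullet}_{\aug}$ (Lemma~\ref{lem:globalder});
\item $\Gr_{\aug}\Gr^{\Ran}\Gamma_v$ is given by Proposition~\ref{prop:grgammaformula}.
\end{itemize}
Since $F^{\Ran}_\bullet$ is Frobenius-stable and each $H^i_c(\Bun_G^e)$ is finite-dimensional, the trace can be computed on $\Gr_{\aug}\Gr^{\Ran}$. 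After that, your zeta-function bookkeeping applies.

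Recheck the signs in that bookkeeping: a derivation extending $-E_v\otimes\id$ has eigenvalue $+\epsilon_{v,j}$ on $\mathbb{V}_j$, not the $-\epsilon_{v,j}$ you use afterwards. Also, the statement is per component $e$, so no summation over $e$ is needed.
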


Under the case Example \ref{eg:vol=volgen}, Theorem \ref{thm:main} becomes Theorem \ref{thm:maindiff}.

The proof of Theorem \ref{thm:main} will be given in \S\ref{sec:proofmain}.

\begin{remark}
Assumption \ref{assumption:paircomm} can fail. See \cite[Remark\,1.3.9]{feng2026eigenweightsarithmetichirzebruchproportionality} for a counterexample. This also follows from our computation of $E_{v_{\lambda,\det}}$ in the type $D$ case in \S\ref{sec:Depsilon}.
\end{remark}

\section{Calculating global relative Hecke operators}\label{sec:proofmain}
This section is devoted to the proof of Theorem \ref{thm:main}, which will be given in \S\ref{sec:grgamma}.

\subsection{\texorpdfstring{Cohomology of $\Bun_G$}{Cohomology of BunG}} We now recall the Atiyah--Bott formula for the cohomology ring $H^*(\Bun_G)$ and the filtrations on it as considered in \cite{FYZvolume}.

\subsubsection{Augmentation filtration}\label{sec:augfil}
Recall the connected component decomposition $\Bun_G=\coprod_{e\in \pi_1(G)}\Bun_G^e$ where $\pi_1(G)$ is the algebraic fundamental group of $G$. For each $e\in \pi_1(G)$, the cohomology ring $H^*(\Bun_G)$ has a natural augmentation ideal $H^{>0}(\Bun_G^e)\sub H^*(\Bun_G^e)$. We get a decreasing augmentation filtration $\{F_{\aug}^{\bl}H^*(\Bun_G^e) \}_{\bl\in \ZZ_{\geq 0}}$ on $H^*(\Bun_G^e)$ such that \[F_{\aug}^{i}H^*(\Bun_G^e):=(H^{>0}(\Bun_G^e))^i\sub  H^{*}(\Bun_G^e).\] 

\subsubsection{Atiyah--Bott formula}\label{sec:AB}
Consider the map \[f:\Bun_G\times C\to \mathbb{B}(L^+G\rtimes\Aut(D)).\] It induces a map\footnote{We write $H_{\bullet}(C)$ instead of $H_*(C)$ to match the homological grading here with the Ran grading in \S\ref{sec:Ranfil}.} \[\taut: H^*(\mathbb{B}(L^+G\rtimes\Aut(D)))\otimes H_{\bullet}(C)\to H^*(\Bun_G)\] defined as \[\taut(\gamma\otimes z)= \int_zf^*\gamma\] where $\int_z: H^{i}(C)\to k$ is the natural integration map and $\pr_1^{\Bun_G}:\Bun_G\times C\to \Bun_G$ is the projection.

For each $e\in \pi_1(G)$, one can further restrict to a single connected component and get \[\taut^e: H^*(\mathbb{B}(L^+G\rtimes\Aut(D)))\otimes H_{\bullet}(C)\to H^*(\Bun_G^e).\] We abbreviate $\taut(\gamma\otimes z)=\gamma^z$. We call classes of the form $\gamma^z$ \emph{tautological classes}.

Note that the map \[\taut^e|_{H^*(\mathbb{B}G)\otimes H_{\bullet}(C)}:H^*(\mathbb{B}G)\otimes H_{\bullet}(C)\to H^*(\Bun_G)\] preserves augmentation filtrations on both sides. Here, we are considering the filtration $F_{\aug}^{i}(H^*(\mathbb{B}G)\otimes H_{\bullet}(C)):= F_{\aug}^{i}H^*(\mathbb{B}G)\otimes H_{\bullet}(C)$ on the left-hand-side. Therefore, we get an induced homomorphism \[\Gr^{\bl}_{\aug}(\taut^e|_{H^*(\mathbb{B}G)\otimes H_{\bullet}(C)}):\Gr_{\aug}^{\bl}(H^*(\mathbb{B}G)\otimes H_{\bullet}(C))\to \Gr_{\aug}^{\bl} H^*(\Bun_G^e).\] The Atiyah--Bott formula can be formulated as follows:
\begin{thm}\label{thm:AB}
    The map $\Gr^{\bl}_{\aug}(\taut^e|_{H^*(\mathbb{B}G)\otimes H_{\bullet}(C)})|_{\mathbb{V}\otimes H_{\bullet}(C)}:\mathbb{V}\otimes H_{\bullet}(C)\to \Gr_{\aug}^{\bl} H^*(\Bun_G^e)$ induces a ring isomorphism \[\mathrm{AB}^e:\Sym^{\bl}(\mathbb{V}\otimes H_{\bullet}(C))\isom \Gr_{\aug}^{\bl}H^*(\Bun_G^e) .\]
\end{thm}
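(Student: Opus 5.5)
This is the Atiyah--Bott theorem in its filtered form, and we plan to deduce it from the classical statement rather than reprove it. The classical input, in the $\ell$-adic setting over $\overline{\FF}_q$ (Atiyah--Bott topologically, Heinloth--Schmitt or Gaitsgory--Lurie $\ell$-adically), says the following. Choose homogeneous generators $x_1,\dots,x_n$ of $H^*(\mathbb{B}G)=R^W$ of degrees $2d_i$ and a basis $\{z\}$ of $H_\bullet(C)$. Then the tautological classes $x_i^z$ that have positive degree freely generate $H^*(\Bun_G^e)$ as a graded-commutative algebra. The odd classes, coming from $z\in H_1(C)$, generate an exterior part. Note that $x_i^{[C]}$ has degree $2d_i-2$, which is positive since $d_i\ge 2$ for semisimple $G$.

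The reduction proceeds in four steps.

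\emph{Step 1.} The map $\taut^e|_{H^*(\mathbb{B}G)\otimes H_\bullet(C)}$ is compatible with augmentation filtrations, as already noted before the statement. So $\mathrm{AB}^e$ is a well-defined homomorphism of graded-commutative rings. It is determined on degree one by sending $\bar x\otimes z\in\mathbb{V}\otimes H_\bullet(C)$ to the class of $x^z$ in $\Gr^1_{\aug}$.

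\emph{Step 2.} We check that $\mathrm{AB}^e$ is well defined, i.e.\ that the degree-one map does not depend on the lift $x$ of $\bar x$. If $x=yw$ with $y,w\in H^{>0}(\mathbb{B}G)$, then by the Künneth formula for $f^*y\cdot f^*w$ on $\Bun_G\times C$ we get
\[
(yw)^z=\sum \pm\, y^{z'}w^{z''}
\]
plus terms of the form $(\text{class pulled back from }\Bun_G)\cdot\int_z(\text{class on }C)$. Each such term is a product of two classes of positive degree, because $f^*y$ restricted to any $\Bun_G\times\{pt\}$ already lies in the augmentation ideal. Hence all these terms lie in $F^2_{\aug}$, and the degree-one map kills $F^2_{\aug}H^*(\mathbb{B}G)\otimes H_\bullet(C)$. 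This is exactly what is needed.

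\emph{Step 3.} We show $\mathrm{AB}^e$ is surjective. Since $H^*(\Bun_G^e)$ is generated by the tautological classes, $\Gr^1_{\aug}$ is spanned by their images. We must also account for the degree-zero classes $x^z$, where $2d_i=|z|$ can only happen for $z$ the fundamental class in the top degree. These are scalars, namely degree-type numbers depending on $e$. They vanish in $\Gr^\bullet_{\aug}$ in positive filtration and are harmlessly absorbed.

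\emph{Step 4.} We show injectivity, which we expect to be the main technical point. Freeness of $H^*(\Bun_G^e)$ on the generators $x_i^z$ implies that the augmentation filtration is just the polynomial-degree filtration. For a free graded-commutative algebra $A=\Sym(U)$ with $U$ in positive degrees, one has $(A^{>0})^m=\Sym^{\ge m}(U)$, so $\Gr_{\aug}A\cong\Sym^\bullet(U)$ canonically. The remaining care is in two places. First, the generators are taken from $\mathbb{V}$ only up to higher-order corrections, and Step 2 shows these corrections are invisible on $\Gr$. Second, the degree-zero tautological classes must be excluded; after restricting to a component and dimension counting, the $\mathbb{V}\otimes H_\bullet(C)$ generators of positive degree match the free generators.

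Comparing Poincaré series then gives bijectivity in each graded piece.
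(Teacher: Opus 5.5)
Your argument is correct and is essentially the paper's route. The paper gives no independent proof: it cites Heinloth--Schmitt \cite{heinloth2010cohomology} for the free generation of $H^*(\Bun_G^e)$ by the tautological classes $x_i^z$, and \cite[Theorem\,4.2.8]{FYZvolume} for this filtered formulation. Your Steps 1--4 supply the routine translation between the two, namely that for a free graded-commutative algebra on positive-degree generators the augmentation filtration is the word-length filtration.

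One correction to the justifications. In Step 2, what puts the Künneth terms $y^{z'}w^{z''}$ into $F^2_{\aug}$ is the degree count $|y^{z'}|\ge |y|-2\ge 2$, since $H^{>0}(\mathbb{B}G)$ starts in degree $4$ for semisimple $G$; restriction to $\Bun_G\times\{pt\}$ is not the reason. By the same count, the ``degree-zero tautological classes'' you discuss in Steps 3--4 never occur. Had they existed, they would not be ``harmlessly absorbed'': they would prevent the degree-one map from landing in $\Gr^1_{\aug}$ at all.
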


Theorem \ref{thm:AB} is first proved in \cite{heinloth2010cohomology} in positive characteristic. See \cite[Theorem\,4.2.8]{FYZvolume} for this formulation and a generalization.


\subsubsection{Ran filtration}\label{sec:Ranfil}
Now we introduce the Ran grading considered in \cite[\S5.5.4]{FYZvolume}. For each $e\in \pi_1(G)$, define the \emph{Ran filtration} $\{F^{\Ran}_{\bullet}H^*(\Bun_G^e)\}_{\bullet\in \ZZ_{\geq 0}}$ such that $F^{\Ran}_iH^*(\Bun_G^e)\sub H^*(\Bun_G^e)$ is spanned by those elements of the form $\gamma_1^{z_1}\cdots\gamma_k^{z_k}$ such that $\sum_i|z_k|\leq i$. Here, we have $\gamma_s\in H^*(\mathbb{B}(L^+G\rtimes\Aut(D)), z_s\in H_{\bullet}(C)$, and $|z_s|$ is the homological degree of $z_s$, and $k$ is any non-negative integer.

Note that the Ran filtration is multiplicative, therefore, the associated graded $\Gr^{\Ran}_{\bullet}H^*(\Bun_G^e)$ carries a natural commutative ring structure and a natural augmentation filtration $\{F_{\aug}^{\bl}\Gr^{\Ran}_{\bullet}H^*(\Bun_G^e)\}_{\bl\in \ZZ_{\geq 0}}$.

\subsection{Commutator relation}
We first introduce the key ingredient in the proof of Theorem \ref{thm:main}: Theorem \ref{thm:universalcommutator}, which is a special case of the commutator relations conjectured in \cite[\S4.5]{liu2025higherperiodintegralsderivatives}.

We have a commutative diagram \begin{equation}
    \begin{tikzcd}
        H^*(\mathbb{B}(L^+G\rtimes\Aut(D))) \ar[r, "H^*(\frc^{\loc})"] \ar[d, "f^*"] & H_{-(*+d_{\frc})}^{BM}(\Hk_G^{\loc}/\mathbb{B}(L^+G\rtimes\Aut(D)),\cK) \ar[r, "H^*(\frd^{\loc})"] \ar[d, "f_{\Hk}^*"] & H^{*+d_v}(\mathbb{B}(L^+G\rtimes\Aut(D))) \ar[d, "f^*"] \\
        H^*(\Bun_G\times C) \ar[r, "H^*(\frc)"] & H_{-(*+d_{\frc})}^{BM}(\Hk_G/\Bun_G\times C,f_{\Hk}^*\cK) \ar[r, "H^*(\frd)"] & H^{*+d_v}(\Bun_G\times C)
    \end{tikzcd}
.\end{equation}

The map $f^*:R^W[\hbar]=H^*(\mathbb{B}(L^+G\rtimes\Aut(D)))\to H^*(\Bun_G\times C)$ admits the following explicit description: Consider the map $\ev:\Bun_G\times C\to \mathbb{B}G$ and the induced map $\ev^*:R^W=H^*(\mathbb{B}G)\to H^*(\Bun_G\times C)$. We have \begin{equation}
    f^*\alpha=\ev^*\alpha
\end{equation} for $\alpha\in  R^W \sub R^W[\hbar]$ and \begin{equation} f^*\hbar=1\otimes c_1(T_C)=1\otimes(2-2g)\xi\in H^*(\Bun_G\times C).\end{equation} Here $\xi\in H^2(C)$ is the fundamental class of $C$.

Consider the maps \[\lf:\Hk_G\times C\xrightarrow{\lh\times\id}\Bun_G\times C\times C \xrightarrow{\pr_{1,3}}\Bun_G\times C\xrightarrow{f} \mathbb{B}(L^+G\rtimes\Aut(D))\] and \[\rf:\Hk_G\times C\xrightarrow{\rh\times\id}\Bun_G\times C\times C \xrightarrow{\pr_{1,3}}\Bun_G\times C\xrightarrow{f} \mathbb{B}(L^+G\rtimes\Aut(D)).\] Consider the map $\D_{\Hk}=(\id,l):\Hk_G\to \Hk_G\times C$ where $l:\Hk_G\to C$ is the map remembering the leg. We have a Gysin homomorphism \[\D_{\Hk,!}:H^*(\Hk_G)\to H^{*+2}(\Hk_G\times C)\] and the map $f_{\Hk}^*:H^*(\mathbb{B}(L^+G\rtimes\Aut(D)))\to H^*(\Hk_G)$.

\begin{thm}\label{thm:universalcommutator}
    For any $\alpha\in H^i(\mathbb{B}(L^+G\times\Aut(D)))$, we have \begin{equation}\label{eq:universalcommutator}\lf^*\alpha-\rf^*\alpha=\D_{\Hk,!}f^*_{\Hk}[\alpha]\in H^i(\Hk_G\times C) \end{equation} where $[\alpha]$ is defined in \eqref{eq:localcommutator}.
\end{thm}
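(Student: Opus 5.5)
We compare the two pullbacks of $\alpha$ to $\Hk_G\times C$. A point of $\Hk_G\times C$ is a modification $\cE\dashrightarrow\cE'$ at a leg $x$, together with a second point $y\in C$. The map $\lf$ (resp. $\rf$) records the restriction of $\cE$ (resp. $\cE'$) to the formal disc at $y$, viewed as a torsor for $L^+G\rtimes\Aut(D)$.

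\textbf{Off the graph.} Away from the graph $\D_{\Hk}(\Hk_G)=\{y=x\}$, the modification is an isomorphism near $y$. Hence $\lf$ and $\rf$ become canonically isomorphic on the open complement $U$, and $\lf^*\alpha-\rf^*\alpha$ restricts to zero on $U$.

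\textbf{Lifting to the graph.} By the localization sequence
\[H^{i-2}(\Hk_G)(-1)\xrightarrow{\D_{\Hk,!}}H^i(\Hk_G\times C)\to H^i(U),\]
the difference lifts to a class supported on the graph. The graph is a smooth divisor, being the pullback of the diagonal of $C$ along $l\times\id$. To fix a canonical lift, I would realize the two classes as pullbacks of one geometric object. Namely, over $\Hk_G\times C$ consider the universal local situation: the formal neighborhood of $y$ in the pair $(\cE,\cE')$, seen as a family over the "two-point" configuration $(x,y)$. This is the relative/Beilinson--Drinfeld-type local Hecke stack over $\widehat{C\times C}$, or its local model over $D$ with a moving point.

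The problem then reduces to a universal statement on a local stack $\mathcal{Y}\to \mathbb{A}^1$, with coordinate the difference $t=y-x$, equivariant for $L^+G\rtimes\Aut(D)$. Over $t\neq0$ the two maps to $\mathbb{B}(L^+G\rtimes\Aut(D))$ agree. Over $t=0$ the fiber is $\Hk^{\loc}_G$, and the two maps restrict to $\lh_{\loc}$ and $\rh_{\loc}$. So the difference class is $i_{0,!}(\beta)$ for some $\beta\in H^{i-2}(\Hk_G^{\loc})$, and we must identify $\beta=[\alpha]$.

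\textbf{Identifying $\beta$.} Restrict $i_{0,!}\beta$ back to the zero fiber. Self-intersection gives $i_0^*i_{0,!}\beta=e(N)\beta$, where the normal bundle of the graph is $T_D$ with its $\Aut(D)$-action, so $e(N)=\hbar$. On the other hand, $i_0^*(\lf^*\alpha-\rf^*\alpha)=\lh_{\loc}^*\alpha-\rh_{\loc}^*\alpha$. Hence $\hbar\beta=\lh_{\loc}^*\alpha-\rh_{\loc}^*\alpha$. Flatness of $H^*(\Hk_G^{\loc})$ over $k[\hbar]$ makes $\hbar$ a non-zero-divisor, so $\beta=[\alpha]$ as defined in \eqref{eq:localcommutator}. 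Globally, the same argument gives $\D_{\Hk}^*\D_{\Hk,!}\gamma=c_1(N_{\D})\gamma=f_{\Hk}^*\hbar\cdot\gamma$. However, $f_{\Hk}^*\hbar$ need not be a non-zero-divisor on $H^*(\Hk_G)$. This is why the class is pinned down locally and then pulled back via the Cartesian diagram: the universal identity on $\mathcal{Y}$ pulls back to \eqref{eq:universalcommutator}.

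\textbf{Main obstacle.} The key step is constructing the universal local family $\mathcal{Y}$ over the moving point $t$, with maps to $\mathbb{B}(L^+G\rtimes\Aut(D))$ that specialize correctly. Concretely, $\mathcal{Y}$ parametrizes a modification at $0$ of the trivial torsor on the disc, together with a second point $t$ and the restriction to the disc re-centered at $t$. Re-centering requires the $\Aut(D)$-equivariance together with translation by $t$. Carrying this out needs some care, for instance working with jets of sufficiently high order, or with a Beilinson--Drinfeld Grassmannian over $D\times D$ completed along the diagonal. Here I would first try the BD Grassmannian over $\widehat{D}^2$ with its $\Aut$-action, and prove the vanishing off $t=0$ by the canonical isomorphism there.
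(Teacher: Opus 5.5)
Your skeleton is sound as far as it goes. The difference $\lf^*\alpha-\rf^*\alpha$ does vanish off the graph. Relative purity for the section of $\Hk_G\times C\to\Hk_G$ identifies classes supported on the graph with $\D_{\Hk,!}$ of classes on $\Hk_G$. You are also right that $\D_{\Hk}^*\D_{\Hk,!}=l^*c_1(T_C)\cdot(-)$ cannot pin down the lift on $\Hk_G\times C$, because $l^*c_1(T_C)$ is nilpotent. So the whole proof rests on two things you do not construct: the family $\mathcal{Y}$, and a map $\Phi:\Hk_G\times C\to\mathcal{Y}$ such that $\lf,\rf$ factor through it, $\Phi^{-1}(Y_0)$ is the graph, and $\Phi^*i_{0,!}=\D_{\Hk,!}f_{\Hk}^*$.

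Moreover, the constructions you suggest lack the needed properties:
\begin{itemize}
\item Suppose the second point $t$ ranges over the formal neighbourhood of the diagonal (your $\widehat{D}^2$ completed along the diagonal, or $t$ nilpotent in $R$). Then $R[[z]]$ is already $(z-t)$-adically complete and $z-t$ is invertible in $R((z))$. So the disc and punctured disc at $t$ are those at $0$, and the modification is also a modification at $t$. Then $\lf$ and $\rf$ are never identified, and the localization step has no open set to act on.
\item Suppose instead $t$ is an arbitrary point of $\mathrm{Spec}\,k[[z]]$, e.g.\ a unit in $R$. Then $z-t$ is a unit in $R[[z]]$, and ``the restriction to the disc re-centered at $t$'' of a torsor on $\mathrm{Spec}\,R[[z]]$ is undefined.
\end{itemize}

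A working model must use the formal neighbourhood of the pair $\{x,y\}$ in an honest curve, e.g.\ $R[z]^\wedge_{z(z-t)}$ inside $\mathbb{A}^1_R$. Its automorphism group jumps from $\Aut(D)$ on the diagonal to $\Aut(D)\times\Aut(D)$ off it. So $\mathcal{Y}$ cannot be an $L^+G\rtimes\Aut(D)$-equivariant family over $\mathbb{A}^1$ with coordinate $y-x$.

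A coordinate-dependent model only receives maps from $\Hk_G|_V\times V$ for coordinate patches $V\subset C$. This shrinks the graph itself, and an identity in $H^{i-2}(\Hk_G)$ cannot be checked on an open cover. To globalize, you would need all of the following, and this is the missing idea rather than a routine detail:
\begin{itemize}
\item a moduli stack of two-pointed formal curves, i.e.\ a two-point analogue of $\mathbb{B}\Aut(D)$;
\item the Hecke stack over it;
\item purity along its diagonal;
\item base change for the Gysin map.
\end{itemize}

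The paper avoids this geometry entirely. It shows \eqref{eq:universalcommutator} is multiplicative in $\alpha$ and functorial in $G$, which reduces to $\GL_N$. It then passes to the module version, Theorem~\ref{thm:universalcommutatormodule}, using Ginzburg's theorem to get faithfulness of the $I\!H^*$-modules. Finally, it proves stability under convolution, which reduces everything to $\IC_{(1,0,\dots,0)}$ for $\GL_N$; that case is \cite[Theorem\,3.2.1]{FYZvolume}. Completed, your approach would give a uniform geometric proof independent of that base case, but as written it does not prove the theorem.
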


The proof of Theorem \ref{thm:universalcommutator} will be given in \S\ref{sec:proofcomm}.

\begin{remark}
    Theorem \ref{thm:universalcommutator} has an equivalent formulation, Theorem \ref{thm:universalcommutatormodule}. The proof of these two equivalent theorems confirms \cite[Conjecture\,4.45]{liu2025higherperiodintegralsderivatives} in the special case that $X=\textup{point}$ and one of $V,W$ is the trivial representation. Since \cite[Assumption\,4.46]{liu2025higherperiodintegralsderivatives} is not satisfied in Theorem \ref{thm:universalcommutatormodule}, the method of proof in \cite{liu2025higherperiodintegralsderivatives} is not applicable here.
\end{remark}

\begin{remark}
    It is obvious that both sides of \eqref{eq:universalcommutator} have the same image under the map $\D_{\Hk}^*:H^i(\Hk_G\times C)\to H^i(\Hk_G)$. Indeed, we have $\D_{\Hk}^*(\lf^*\alpha-\rf^*\alpha)=f_{\Hk}^*(\lh^*_{\loc}\alpha-\rh^*_{\loc}\alpha)=f_{\Hk}^*(\hbar\cdot [\alpha])=c_1(T_C)\cdot f_{\Hk}^*[\alpha]=\D_{\Hk}^*\D_{\Hk,!}f^*_{\Hk}[\alpha]$. This verifies the compatibility between the sign on both sides of \eqref{eq:universalcommutator} and the normalization $\hbar = c_1(T_D)\in H^2(\mathbb{B}\Aut(D))$.
\end{remark}

For later use, we note the following immediate corollary of Theorem \ref{thm:universalcommutator}: 
\begin{cor}\label{cor:universalcommutator}
    For any $\alpha\in H^i(\mathbb{B}(L^+G\rtimes\Aut(D)))$ and $z\in H_*(C)$, we have \begin{equation}\label{eq:universalcommutatorev}
        \lh^*(\pr_1^{\Bun_G})^*\alpha^z-\rh^*(\pr_1^{\Bun_G})^*\alpha^z=f_{\Hk}^*[\alpha]\cup l^*\PD(z)
    \end{equation} where $\pr_1^{\Bun_G}:\Bun_G\times C\to C$ is the projection to the first factor and $\lh,\rh,f_{\Hk}$ are the maps in \eqref{diag:localtoglo}. The map $\mathrm{PD}:H_*(C)\isom  H^{2-*}(C)$ is the isomorphism induced from the Poincar\'e duality characterized by $\int_{\xi}(\zeta\cup\PD(z))=\int_z \zeta$ for every $\zeta\in H^{*}(C)$.
\end{cor}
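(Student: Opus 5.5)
The plan is to pull back the identity of Theorem \ref{thm:universalcommutator} along $\Hk_G\times C$ and then integrate over the second factor $C$ against $z$. This integration is exactly how the tautological class $\alpha^z$ is built from $f^*\alpha$.

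Concretely, consider the projection $p:\Hk_G\times C\to\Hk_G$. We have
\[
\lh\times\id:\Hk_G\times C\to\Bun_G\times C\times C,
\qquad
\mathrm{pr}_{1,3}\circ(\lh\times\id)=\lh\times\id_C
\]
as maps to $\Bun_G\times C$, where $\lh$ here denotes the composite $\Hk_G\to\Bun_G\times C\to\Bun_G$. Base change for the Cartesian square formed by $\Hk_G\times C\to \Bun_G\times C$ over $\Hk_G\to\Bun_G$ gives
\[
\lh^*(\pr_1^{\Bun_G})^*\alpha^z=\int_z\lf^*\alpha,
\]
where $\int_z$ means fiber integration along $p$, i.e. $p_!(-\cup \PD(z))$ up to the normalization fixed in the statement. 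The same holds with $\rh$ and $\rf$. Here $(\pr_1^{\Bun_G})^*\alpha^z$ should be read as the pullback of $\alpha^z$ to $\Bun_G\times C$ and then along $\lh,\rh:\Hk_G\to\Bun_G\times C$. I would verify this by unwinding the definition $\alpha^z=\int_z f^*\alpha$ and using that integration over the second factor commutes with pullback along $\lh\times\id_C$, since $p$ and $\pr_1$ are trivial families.

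Applying $\int_z$ to both sides of \eqref{eq:universalcommutator} therefore reduces the left-hand side of \eqref{eq:universalcommutatorev} to
\[
\int_z \D_{\Hk,!}f_{\Hk}^*[\alpha].
\]
For the right-hand side, I would use the projection formula for the graph embedding $\D_{\Hk}=(\id,l)$. Writing $\int_z(\beta)=p_!(\beta\cup q^*\PD(z))$ with $q:\Hk_G\times C\to C$, we get
\[
p_!\bigl(\D_{\Hk,!}\gamma\cup q^*\PD(z)\bigr)=p_!\D_{\Hk,!}\bigl(\gamma\cup \D_{\Hk}^*q^*\PD(z)\bigr)=\gamma\cup l^*\PD(z),
\]
because $p\circ\D_{\Hk}=\id$ and $q\circ\D_{\Hk}=l$. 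Taking $\gamma=f_{\Hk}^*[\alpha]$ gives exactly the claimed formula.

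The only delicate point is bookkeeping of normalizations. I need the precise convention relating $\int_z$ on $H^*(C)$ to $p_!(-\cup\PD(z))$; the characterization $\int_\xi(\zeta\cup\PD(z))=\int_z\zeta$ supplies this, with the sign from ordering the cup product on the correct side. I also need that the Gysin map $\D_{\Hk,!}$ satisfies the projection formula with $p_!\D_{\Hk,!}=\id$. Once these are fixed, the remaining steps are formal.
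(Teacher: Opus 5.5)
Your proposal is correct and follows the paper's proof: both apply $\int_z=p_!(-\cup q^*\PD(z))$ to the identity of Theorem~\ref{thm:universalcommutator}, using $\lf=f\circ(\overline{\lh}\times\mathrm{id}_C)$ to recover $\alpha^z$ on the left-hand side. The only difference is on the right-hand side, where the paper writes $\D_{\Hk,!}$ as cup product with the pulled-back diagonal class $1\otimes\xi-\beta+\xi\otimes 1$, whereas you use the projection formula together with $p\circ\D_{\Hk}=\mathrm{id}$; this is the same computation phrased more invariantly.
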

\begin{proof}
    Consider the map $\pr_1^{\Hk_G}:\Hk_G\times C\to \Hk_G$ (resp. $\pr_2^{\Hk_G}:\Hk_G\times C\to C$) given by projection to the first (resp. second) factor. Note that the Gysin map can be written as \[\D_{\Hk,!}=\D_{\Hk,!}\D_{\Hk}^*(\pr_1^{\Hk_G})^*=(\pr_1^{\Hk_G})^*(-)\cup \D_{\Hk,!}1=\pr_1^*(-)\cup (1\otimes\xi-\beta+\xi\otimes 1)\] where $\beta=\sum\zeta_i\otimes\zeta^i$ for a basis $(\zeta_i)$ of $H^1(C)$ with dual basis $(\zeta^i)$ satisfying $\zeta_i\cup\zeta^j=\delta_{ij}\xi$. The equation \eqref{eq:universalcommutator} can be rewritten as \begin{equation}\label{eq:universalcommutator2}
    \lf^*\alpha-\rf^*\alpha=(\pr_1^{\Hk_G})^*f^*_{\Hk}[\alpha] \cup (1\otimes\xi-\beta+\xi\otimes 1) .
\end{equation}   The desired identity follows from applying $\int_z=(\pr_1^{\Hk_G})_!(-\cup (\pr_2^{\Hk})^*\PD(z))$ to the identity \eqref{eq:universalcommutator2}.

\end{proof}

\subsection{Comparing local and global relative Hecke operators}
Now we apply Theorem \ref{thm:universalcommutator} to compute the global relative Hecke operators $\Gamma_v$. We fix a local volume datum $v=(\cK,\frc^{\loc},\frd^{\loc})$ and study the map $\Gamma_{v}^e$ evaluated on tautological classes.

Using \eqref{eq:universalcommutatorev}, for each $\alpha\in H^*(\mathbb{B}(L^+G\rtimes\Aut(D)))$ and $z\in H_*(C)$, we have \begin{equation}\label{eq:gammaformula1}\begin{split}
    \Gamma_{v}(\alpha^z)&=(\pr_{1}^{\Bun_G})_!H^*(\frd)H^*(\frc)(\pr_{1}^{\Bun_G})^*\alpha^z \\ &=(\pr_{1}^{\Bun_G})_!H^*(\frd)(\lh^*(\pr_{1}^{\Bun_G})^*\alpha^z\cdot H^*(\frc)1) \\ &=(\pr_{1}^{\Bun_G})_!H^*(\frd)((\rh^*(\pr_{1}^{\Bun_G})^*\alpha^z+f_{\Hk}^*[\alpha]\cup l^* \PD(z))\cdot H^*(\frc)1) \\ 
    &=(\pr_{1}^{\Bun_G})_!((\pr_{1}^{\Bun_G})^*\alpha^z\cdot H^*(\frd)H^*(\frc)1)+ (\pr_{1}^{\Bun_G})_!H^*(\frd)(f_{\Hk}^*[\alpha]\cup l^*\PD(z)\cdot H^*(\frc)1) \\ &=\alpha^z\cdot (\pr_{1}^{\Bun_G})_!(f^*H^*(\frd^{\loc})H^*(\frc^{\loc})1)+(\pr_{1}^{\Bun_G})_!(f^*H^*(\frd^{\loc})([\alpha]\cdot H^*(\frc^{\loc})1)\cup l^*\PD(z))
    \\ &=\alpha^z\cdot \Gamma_{v}^{\loc}(1)^{\xi}+ \widetilde{\nabla}_{v}^{\loc}(\alpha)^z
    \\ &=(2g-2)b_v\cdot \alpha^z+ \widetilde{\nabla}_{v}^{\loc}(\alpha)^z
\end{split}\end{equation} where the map $\Gamma_v^{\loc}$ is defined in \eqref{eq:localhmap} and the map $\widetilde{\nabla}_v^{\loc}$ is defined in \eqref{eq:nablatildeloc}. This is the global counterpart of \eqref{eq:gammaformulaloc1}.

Similarly, for $\alpha_i\in H^*(\mathbb{B}(L^+G\rtimes\Aut(D))), z_i\in H_*(C), i\in I$, we have \begin{equation}\label{eq:gammaformula}
    \begin{split}
        \Gamma_{v}(\prod_{i\in I}\alpha_i^{z_i})=\sum_{J\sub I}\pm (\prod_{j\in J}\alpha_j^{z_j})\pr_{1,!}(f^*H^*(\frd^{\loc})(\prod_{i\in I\backslash J}[\alpha_i]\cdot H^*(\frc^{\loc})1)\cup \prod_{i\in I\backslash J}\PD(z_i)) \\ =\sum_{J\sub I}\pm(\prod_{j\in J}\alpha_j^{z_j}) \Gamma_{({\prod_{i\in I\backslash J}[\alpha_i]})\cdot v}^{\loc}(1)^{\cap_{i\in I\backslash J}z_i}
    .\end{split}
\end{equation} Here, we define $z_i\cap z_j=\PD^{-1}(\PD(z_i)\cup \PD(z_j))$.

This formula is parallel to \cite[(5.5.13)]{FYZvolume} and is a global analogue of \eqref{eq:gammaformulaloc}.


\subsection{Global relative Hecke operators on associated graded}\label{sec:grgamma}

We have the following global analogue of Lemma \ref{lem:localder}:
\begin{lemma}\label{lem:globalder}
    The map $\Gamma_v^e:H^*(\Bun_G^{e+e_{\cK}})\to H^*(\Bun_G^e)$ preserves the Ran filtration $F^{\Ran}_{\bullet}$ defined on both sides in \S\ref{sec:Ranfil}. Moreover, the induced map on associated graded\footnote{The local analogue of taking associated graded with respect to the Ran filtration should be regarded as taking $-\otimes_{k[\hbar]}k$.} $\Gr^{\Ran}_{\bullet}\Gamma_v^e:\Gr^{\Ran}_{\bullet}H^*(\Bun_G^{e+e_{\cK}})\to \Gr^{\Ran}_{\bullet}H^*(\Bun_G^e)$ is a derivation, which preserves the augmentation filtration $F^{\bl}_{\aug}$ on both sides.
\end{lemma}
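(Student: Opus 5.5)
The plan is to read off both claims directly from the global commutator formula \eqref{eq:gammaformula}, together with the fact that tautological classes $\gamma^z$ with $\gamma\in H^*(\mathbb{B}(L^+G\rtimes\Aut(D)))$ and $z\in H_{\bullet}(C)$ generate $H^*(\Bun_G^{e+e_{\cK}})$ as a ring. Generation follows from Theorem \ref{thm:AB}: the associated graded is generated by the images of $\mathbb{V}\otimes H_\bullet(C)$, so the filtered ring is generated by lifts of these classes, and tautological classes provide such lifts. Consequently, a spanning set of $F^{\Ran}_m$ is given by monomials $\prod_{i\in I}\alpha_i^{z_i}$ with $\sum_i|z_i|\le m$, and it suffices to evaluate $\Gamma_v^e$ on these monomials.

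\textbf{Step 1: preservation of $F^{\Ran}$.} Apply \eqref{eq:gammaformula} to a monomial $\prod_{i\in I}\alpha_i^{z_i}$. The term indexed by $J\subseteq I$ is
\[
\Bigl(\prod_{j\in J}\alpha_j^{z_j}\Bigr)\cdot\beta_J^{\cap_{i\in I\setminus J}z_i},
\qquad
\beta_J:=\Gamma^{\loc}_{(\prod_{i\in I\setminus J}[\alpha_i])\cdot v}(1)\in H^*(\mathbb{B}(L^+G\rtimes\Aut(D))).
\]
Since $\cap$ is Poincar\'e dual to cup product, we have $|z_i\cap z_j| = |z_i|+|z_j|-2$. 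This gives $|\cap_{i\in I\setminus J} z_i|\le\sum_{i\in I\setminus J}|z_i|$ whenever $I\setminus J\neq\varnothing$, with equality only when $|I\setminus J|=1$. For $J=I$ the term is $\prod_i\alpha_i^{z_i}\cdot \Gamma^{\loc}_v(1)^{\xi}$, and the cap-product of the empty set is the fundamental class $\xi$, which has homological degree $2$. This looks like it raises the Ran degree, but $\Gamma_v^{\loc}(1)=-b_v\hbar$ and $f^*\hbar$ is pulled back from $C$, so $(-b_v\hbar)^{\xi}$ is the scalar $(2g-2)b_v$ and the Ran degree does not increase (compare \eqref{eq:gammaformula1}). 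Hence every term lies in $F^{\Ran}_m$. Pinning down this bookkeeping is the delicate point.

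\textbf{Step 2: the associated graded is a derivation.} Modulo $F^{\Ran}_{m-1}$, the only surviving terms are:
\begin{itemize}
\item $J=I$, which contributes the scalar $(2g-2)b_v$ times the monomial;
\item $|I\setminus J|=1$, which gives $\sum_i \pm\bigl(\prod_{j\neq i}\alpha_j^{z_j}\bigr)\,\widetilde\nabla^{\loc}_v(\alpha_i)^{z_i}$.
\end{itemize}
Terms with $|I\setminus J|\ge 2$ have strictly smaller Ran degree, so they drop out. The second item is a Leibniz-rule expression, with Koszul signs coming from the odd classes $z_i$.

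Here I would have to check the scalar term. Either it should be reinterpreted so that the derivation statement holds, or, more likely, the intended normalization absorbs it. Concretely, the statement ``$\Gr^{\Ran}\Gamma_v^e$ is a derivation'' should be read as applying to the part of strictly lower Ran degree shift, with the scalar $(2g-2)b_v\cdot\id$ split off, or equivalently $\Gamma_v^e$ includes the factor $\pr_{1,!}$ that drops degree by $2$. I would fix this by carefully tracking the index shift on $F^{\Ran}$ introduced by $\pr_{1,!}$, and this is the main obstacle.

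\textbf{Step 3: augmentation filtration.} On the associated graded, the derivation sends $\alpha^z\mapsto\widetilde\nabla_v^{\loc}(\alpha)^z$. Modulo $\hbar$, which contributes only through $\xi$ and hence to lower Ran degree, this is $\nabla^{\loc}_v(\alpha)^z$. By Lemma \ref{lem:localder}, $\nabla^{\loc}_v$ preserves $F^{\bl}_{\aug}H^*(\mathbb{B}G)$. A derivation that preserves the augmentation degree on generators then preserves $F^{\bl}_{\aug}$ on the whole ring.
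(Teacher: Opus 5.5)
Your argument is essentially the paper's. You expand $\Gamma_v^e$ on monomials in tautological classes via \eqref{eq:gammaformula} and observe that capping strictly lowers homological degree. Your exact count $|z_i\cap z_j|=|z_i|+|z_j|-2$ sharpens the paper's bound $|z_i\cap z_j|\le\min\{|z_i|,|z_j|\}$. Hence only the terms with $|J|\geq |I|-1$ survive in $\Gr^{\Ran}_\bullet$, and the augmentation statement can be read off from the surviving terms.

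The one thing to correct is your diagnosis in Step 2. There is no obstacle there, and the remedy you propose would not work.
\begin{itemize}
\item Tracking the index shift of $\pr_{1,!}$ cannot help. The map $\pr_{1,!}$ is already part of $\Gamma_v$, and $\Gamma_v^e(1)=\pr_{1,!}f^*(-b_v\hbar)=(2g-2)b_v\cdot 1$. This is nonzero in $\Gr^{\Ran}_0$ as soon as $g\neq 1$ and $b_v\neq 0$.
\item There is no ``part of strictly lower Ran degree shift'' to restrict to. The scalar term and the Leibniz terms lie in the same Ran degree, so no renormalization turns $\Gr^{\Ran}_\bullet\Gamma_v^e$ into an honest derivation.
\end{itemize}

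The word ``derivation'' in the lemma has to be read as ``a derivation plus the scalar $(2g-2)b_v\cdot\id$''. This is what the paper actually proves and uses:
\begin{itemize}
\item its proof keeps the $J=I$ term among the survivors $|J|\geq|I|-1$;
\item Proposition \ref{prop:grgammaformula} records exactly this shape;
\item this scalar is the source of the $(2g-2)b_{v_i}$ term in $D_{v_i}$ in \eqref{eq:diff}.
\end{itemize}
So your first suggested reading, splitting off the scalar, is the right one. With that reading, your Steps 1--3 already form a complete proof.
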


\begin{proof}
The first claim follows from $|z_i\cap z_j|\leq \min\{|z_i|,|z_j|\}$ for any $z_i, z_j\in H_{\bullet}(C)$. The second claim follows from the observation that only the terms in \eqref{eq:gammaformula} satisfying $|J|\geq |I|-1$ survive under $\Gr_{\bullet}^{\Ran}\Gamma_{v}^e$. The last claim, concerning the preservation of the augmentation filtration, follows from \eqref{eq:gammaformula1} and the Atiyah--Bott formula (Theorem \ref{thm:AB}).
\end{proof}

By Lemma \ref{lem:globalder}, we get a derivation \[\Gr_{\aug}^{\bl}\Gr^{\Ran}_{\bullet}\Gamma_v^e:\Gr_{\aug}^{\bl}\Gr^{\Ran}_{\bullet}H^*(\Bun_G^{e+e_{\cK}})\to \Gr_{\aug}^{\bl}\Gr^{\Ran}_{\bullet}H^*(\Bun_G^e).\] By Theorem \ref{thm:AB}, we have $\Gr_{\aug}^{\bl}\Gr^{\Ran}_{\bullet}H^*(\Bun_G^{e+e_{\cK}})\cong \Sym^{\bl}(\mathbb{V}\otimes H_{\bullet}(C))$. Under this isomorphism, we can identify the map above as
\begin{equation}
    \Gr_{\aug}^{\bl}\Gr^{\Ran}_{\bullet}\Gamma_v^e:\Sym^{\bl}(\mathbb{V}\otimes H_{\bullet}(C))\to \Sym^{\bl}(\mathbb{V}\otimes H_{\bullet}(C)).
\end{equation} 

On the other hand, the local relative Hecke operator on the Gross motive \eqref{eq:tmap} induces a derivation
\[
\nabla_{E_v\otimes \id}:\Sym^{\bl}(\mathbb{V}\otimes H_{\bullet}(C))\to \Sym^{\bl}(\mathbb{V}\otimes H_{\bullet}(C)),
\]
defined by requiring that its restriction to generators is given by $\nabla_{E_v\otimes \id}|_{\mathbb{V}\otimes H_{\bullet}(C)}=E_v\otimes\id$, and extended to the symmetric algebra via the Leibniz rule. We have the following key observation, which is a global counterpart of Proposition \ref{prop:grgammaformulaloc}, generalizing \cite[Proposition\,5.6.17]{FYZvolume}:

\begin{prop}\label{prop:grgammaformula}
    We have $\Gr_{\aug}^{\bl}\Gr^{\Ran}_{\bullet}\Gamma_v^e=(2g-2)b_v\cdot \id + \nabla_{E_v\otimes \id}\in \End(\Sym^{\bl}(\mathbb{V}\otimes H_{\bullet}(C)))$.
\end{prop}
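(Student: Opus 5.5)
Since the Atiyah--Bott isomorphism (Theorem \ref{thm:AB}) identifies $\Gr_{\aug}^{\bl}\Gr^{\Ran}_{\bullet}H^*(\Bun_G^e)$ with the free commutative algebra $\Sym^{\bl}(\mathbb{V}\otimes H_{\bullet}(C))$, the plan is to reduce everything to a computation on generators. The operator $\Gamma_v^e$ is not itself a derivation: on a product of tautological classes, formula \eqref{eq:gammaformula} expands into a sum over subsets $J\subseteq I$. The relevant pieces are:
\begin{itemize}
\item the term $J=I$, which contributes $(2g-2)b_v\prod_i\alpha_i^{z_i}$, because $\Gamma_v^{\loc}(1)^{\xi}=-b_v\hbar^{\xi}$ and $\hbar^{\xi}=\int_{\xi}(2-2g)\xi$ gives the factor $(2g-2)b_v$;
\item the terms with $|I\setminus J|=1$, each of the form $\pm\prod_{j\neq i}\alpha_j^{z_j}\cdot\widetilde{\nabla}_v^{\loc}(\alpha_i)^{z_i}$;
\item the terms with $|I\setminus J|\geq 2$, which involve $\cap$-products of at least two $z_i$.
\end{itemize}
By Lemma \ref{lem:globalder}, the last group drops in Ran degree and vanishes on $\Gr^{\Ran}$. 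So on $\Gr^{\Ran}_\bullet$ the operator $\Gamma_v^e$ equals $(2g-2)b_v\cdot\id+\delta$, where $\delta$ is the derivation determined by $\alpha^z\mapsto \widetilde{\nabla}_v^{\loc}(\alpha)^z$. I will state this carefully, subtracting the scalar first so that the remainder is a genuine derivation. The signs $\pm$ are the Koszul signs from moving odd classes $z_i\in H_1(C)$, consistent with the graded-commutative structure on the Sym side.

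It then remains to identify $\Gr_{\aug}^{\bl}\delta$ on the generators $\mathbb{V}\otimes H_\bullet(C)$ with $E_v\otimes\id$. A derivation of a symmetric algebra is determined by its values on generators, and passing to $\Gr_{\aug}$ is compatible with Leibniz. So it suffices to show that for $\alpha\in F^1_{\aug}R^W$ with image $\bar\alpha\in\mathbb{V}$, the class $\widetilde{\nabla}_v^{\loc}(\alpha)^z$ agrees with $(E_v\bar\alpha)^z$ modulo $F^2_{\aug}$ and lower Ran filtration. Here I expand $\widetilde{\nabla}_v^{\loc}(\alpha)\in R^W[\hbar]$ as $\nabla_v^{\loc}(\alpha)+\hbar\cdot(\text{stuff})$. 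The terms divisible by $\hbar$ give classes $(\hbar\beta)^z$. Since $f^*\hbar$ is pulled back from $C$ and lies in $H^2(C)$, and $z\cup(2-2g)\xi$ kills everything except the point class, the tautological class $(\hbar\beta)^z$ is either zero or a multiple of $\beta^{[\mathrm{pt}]}$ of Ran degree $0<|z|$ (when $|z|>0$). When $|z|=0$ it is the degree-zero piece, which is handled by checking that $f^*(\hbar\beta)$ integrated over a point vanishes. Either way these terms vanish in $\Gr^{\Ran}$. By definition $\nabla_v^{\loc}(\alpha)$ modulo $F^2_{\aug}$ is $E_v\bar\alpha$ via \eqref{eq:tmap}, and Lemma \ref{lem:localder} ensures that $F^2_{\aug}$ is preserved.

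I also need $\taut$ to send $F^2_{\aug}R^W\otimes z$ into $F^2_{\aug}$ of $\Gr^{\Ran}H^*(\Bun_G^e)$. For a product $\beta\gamma$, $(\beta\gamma)^z$ is computed by a Künneth/diagonal decomposition of $f^*\beta\cdot f^*\gamma$, giving a sum of products $\beta^{z'}\gamma^{z''}$. These lie in $F^2_{\aug}$ up to terms where one factor has degree zero, and those terms vanish since the factor is in the augmentation ideal.

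The main obstacle is the bookkeeping in the first step: the Ran and augmentation filtrations interact with the $\hbar$-terms and with the Künneth decomposition of $\Delta_!$, and one must make sure no surviving cross term appears in the correct bigraded piece. I would first verify the claim on a single generator $\alpha^z$ via \eqref{eq:gammaformula1}, which is immediate, and then rely on the derivation property from Lemma \ref{lem:globalder} to propagate it to all of $\Sym^{\bl}$.
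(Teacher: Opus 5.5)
Your proposal is correct and follows the paper's argument: the paper just declares the proposition immediate from \eqref{eq:gammaformula} and \eqref{eq:gammaformula1}, and you supply the bookkeeping it leaves implicit (terms with $|I\setminus J|\geq 2$ drop Ran degree, the $\hbar$-multiples in $\widetilde{\nabla}^{\loc}_v(\alpha)$ give tautological classes of strictly lower Ran degree, and the tautological map sends $F^2_{\aug}$ into $F^2_{\aug}$). Your point that the scalar $(2g-2)b_v$ must be subtracted before invoking the derivation property is also right, since $\Gr^{\Ran}_{\bullet}\Gamma_v^e$ sends $1$ to $(2g-2)b_v$ and so is not literally a derivation as Lemma \ref{lem:globalder} phrases it.
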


\begin{proof}
    This is immediate from \eqref{eq:gammaformula} and \eqref{eq:gammaformula1}.
\end{proof}

\begin{proof}[Proof of Theorem \ref{thm:main}]
    The proof given in \cite[\S5.6.18]{FYZvolume} translates verbatimly after replacing \cite[Proposition\,5.6.17]{FYZvolume} by Proposition \ref{prop:grgammaformula}.
\end{proof}

\subsection{Proof of Theorem \ref{thm:universalcommutator}}\label{sec:proofcomm}
In this section, we prove the key identity \eqref{eq:universalcommutator}.
\subsubsection{Multiplicativity in $\alpha$}
We first prove that \eqref{eq:universalcommutator} holds for $\alpha_1\alpha_2$ if it holds for $\alpha_1$ and $\alpha_2$. Indeed, assume \eqref{eq:universalcommutator} holds for $\alpha_1$ and $\alpha_2$. We have \[\begin{split}
    \lf^*(\alpha_1\alpha_2)-\rf^*(\alpha_1\alpha_2) &=\lf^*\alpha_1(\lf^*\alpha_2-\rf^*\alpha_2)+(\lf^*\alpha_1-\rf^*\alpha_1)\rf^*\alpha_2 \\
    &=\lf^*\alpha_1\D_{\Hk,!}f_{\Hk}^*[\alpha_2]+\D_{\Hk,!}f_{\Hk}^*[\alpha_1]\rf^*\alpha_2\\
    &=\D_{\Hk,!}(\lh^*f^*\alpha_1f_{\Hk}^*[\alpha_2]+f_{\Hk}^*[\alpha_1]\rh^*f^*\alpha_2) \\
    &=\D_{\Hk,!}f_{\Hk}^*(\lh_{\loc}^*\alpha_1[\alpha_2]+[\alpha_1]\rh_{\loc}^*\alpha_2)\\
    &=\D_{\Hk,!}f_{\Hk}^*[\alpha_1\alpha_2]
\end{split}.\] This verifies \eqref{eq:universalcommutator} for $\alpha_1\alpha_2$.

\subsubsection{Functoriality for $G_1\to G_2$}
Assume we have a group homomorphism between split reductive groups $G_1\to G_2$, which induces maps $\phi_{\loc}:\mathbb{B}(L^+G_1\rtimes\Aut(D))\to\mathbb{B}(L^+G_2\rtimes\Aut(D))$, $\phi:\Bun_{G_1}\to\Bun_{G_2}$, $\phi_{\loc,\Hk}:\Hk_{G_1}^{\loc}\to\Hk_{G_2}^{\loc}$, and $\phi_{\Hk}:\Hk_{G_1}\to\Hk_{G_2}$. We claim that \eqref{eq:universalcommutator} for $\alpha\in H^*(\mathbb{B}(L^+G_2\times\Aut(D)))$ implies \eqref{eq:universalcommutator} for $\phi_{\loc}^*\alpha\in H^*(\mathbb{B}(L^+G_1\times\Aut(D)))$. Indeed, assume \eqref{eq:universalcommutator} holds for $\alpha\in H^*(\mathbb{B}(L^+G_2\times\Aut(D)))$. We have \[\begin{split}
    \lf^*(\phi_{\loc}^*\alpha)-\rf^*(\phi_{\loc}^*\alpha) &= (\phi_{\Hk}\times\id_C)^*(\lf^*\alpha-\rf^*\alpha) \\
    &=(\phi_{\Hk}\times\id_C)^*\D_{\Hk,!}f_{\Hk}^*[\alpha] \\
    &=\D_{\Hk,!}f_{\Hk}^*\phi_{\loc,\Hk}^*[\alpha] \\
    &=\D_{\Hk,!}f_{\Hk}^*[\phi_{\loc}^*\alpha]
\end{split}.\] This verifies \eqref{eq:universalcommutator} for $\phi_{\loc}^*\alpha$.

\subsubsection{Equivalent formulation}
Note that Theorem \ref{thm:universalcommutator} is equivalent to the following:
\begin{thm}\label{thm:universalcommutatormodule}
    For each $\cK\in\Shv(\Hk_G^{\loc})^{\heartsuit}$, $m\in H^*(\cK)$, and $\alpha\in H^*(\mathbb{B}(L^+G\rtimes\Aut(D)))$, we have \begin{equation}\label{eq:universalcommutatormodule}
       \lf^*\alpha\pr_1^*f_{\Hk}^* m-\rf^*\alpha\pr_1^*f_{\Hk}^* m=\D_{\Hk,!}f^*_{\Hk}([\alpha]m)\in H^*(\Hk_G\times C,\pr_1^*f_{\Hk}^*\cK)
    \end{equation} where $\pr_1:\Hk_G\times C\to \Hk_G$ is the projection to the first factor and $f_{\Hk}^*:H^*(\Hk^{\loc}_G,\cK)\to H^*(\Hk_G,f_{\Hk}^*\cK)$ is the pullback map.
\end{thm}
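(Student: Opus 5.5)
The statement to be addressed is the equivalence between Theorem \ref{thm:universalcommutator} and Theorem \ref{thm:universalcommutatormodule}, together with a route to proving the module version. The plan is to do the equivalence first and then attack the module version.

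\emph{Module version implies Theorem \ref{thm:universalcommutator}.} Take $\cK=\uk_{\Hk_G^{\loc}}$ (or its restriction to a finite union of Schubert strata, which lies in the heart) and $m=1$. Since every class in $H^*(\Hk_G\times C)$ is supported on some $\Hk_{G,\leq\lambda}\times C$, it suffices to check \eqref{eq:universalcommutator} after restriction to each such piece, and there it is exactly \eqref{eq:universalcommutatormodule}.

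\emph{Theorem \ref{thm:universalcommutator} implies the module version.} Both sides of \eqref{eq:universalcommutatormodule} are obtained from the corresponding sides of \eqref{eq:universalcommutator} by the $H^*(\Hk_G\times C)$-module action on $H^*(\Hk_G\times C,\pr_1^*f_{\Hk}^*\cK)$, applied to $\pr_1^*f_{\Hk}^*m$. One then uses the projection formula
\[
\D_{\Hk,!}(x)\cdot \pr_1^*y=\D_{\Hk,!}(x\cdot \D_{\Hk}^*\pr_1^*y)=\D_{\Hk,!}(x\, y),
\]
valid since $\pr_1\circ\D_{\Hk}=\id$.

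\emph{Proof of the module version.} I would reduce to the case $G=T$ a split torus, or more precisely to $\alpha\in R=H^*(\mathbb{B}T)$, via a Borel/torus comparison. The tools are the multiplicativity in $\alpha$ already shown, together with the functoriality for $G_1\to G_2$, which says that pulling back along $B\to G$ is harmless. Since $H^*(\mathbb{B}G)\to H^*(\mathbb{B}B)=R$ is injective, it is enough to verify the identity after pulling back along $\Hk_B\to\Hk_G$. Injectivity on the Hecke side is the delicate point. Here I would use the semi-infinite (Mirković--Vilonen) stratification of $\Gr_G$ by $S_\nu$: on each stratum the family $\Hk_G$ contracts onto $\Hk_T$, and the hyperbolic localization/weight argument gives injectivity of the restriction of $H^*$ of $\IC$-type (pure) sheaves. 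This step is where the heart condition $\cK\in\Shv^\heartsuit$ and purity are needed.

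\emph{Torus case.} By multiplicativity it suffices to treat generators, namely $\chi\in X^*(T)$ viewed as $c_1$ in $H^2(\mathbb{B}T)$, together with $\hbar$. For $\hbar$ both sides vanish: $\lf^*\hbar=\rf^*\hbar$ and $[\hbar]=0$. For $\chi$ on the $\nu$-component, $\lf^*\chi-\rf^*\chi$ is $c_1$ of $\cL_\chi(\cE_1)\otimes\cL_\chi(\cE_2)^{-1}$ pulled back along the second $C$-factor. Since $\cE_1=\cE_2(\nu\cdot x)$, this line bundle is $\cO(\langle\chi,\nu\rangle\Delta)$, whose $c_1$ is $\langle\chi,\nu\rangle\,\D_{\Hk,!}1$. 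Locally, $[\chi]=\langle\chi,\nu\rangle$ by the Remark comparing $\D_{\Hk}^*$, since the local $\lh^*\chi-\rh^*\chi=\langle\chi,\nu\rangle\hbar$. This completes the torus case.

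The main obstacle is the injectivity and compatibility of the reduction from $\Hk_G$ with coefficients in $\cK$ to the $T$-strata. In particular, one must check that the relevant restriction map on $H^*(\Hk_G\times C,\pr_1^*f_{\Hk}^*\cK)$ is injective and is compatible with $\D_{\Hk,!}$; I would attempt this using the weight (purity) argument on the semi-infinite filtration.
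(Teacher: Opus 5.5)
Your easy direction (Theorem~\ref{thm:universalcommutator} $\Rightarrow$ the module version, via the projection formula) is fine, and so is your torus computation. The gap is the reduction from $G$ to $B$ or $T$.

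\textbf{Your two justifications do not give the reduction.} Injectivity of $H^*(\mathbb{B}G)\to H^*(\mathbb{B}B)$ concerns the ring in which $\alpha$ lives. It says nothing about whether an identity in $H^*(\Hk_G\times C,\pr_1^*f_{\Hk}^*\cK)$ can be detected after pulling back to $\Hk_B\times C$. The functoriality proved in the paper also runs the wrong way for you: for $G_1\to G_2$ it transports the identity from $G_2$ to $G_1$, i.e.\ from $G$ to $B$, never back. So everything rests on injectivity of the restriction from $\Hk_G\times C$ to $\Hk_B\times C$ with coefficients. You leave this open, and the mechanism you suggest does not reach it.

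\textbf{The missing injectivity is global.} Already for $\cK$ the skyscraper at the unit, it contains the assertion that $H^*(\Bun_G)\to\prod_\theta H^*(\Bun_B^\theta)=\prod_\theta H^*(\Bun_T^\theta)$ is injective. That is a statement about the base $\Bun_G$, not the fibre $\Gr_G$, so no analysis of semi-infinite orbits can supply it. It also genuinely needs all $\theta$ at once: each $H^*(\Bun_T^\theta)$ grows like a polynomial ring in $n$ variables, while $H^*(\Bun_G^e)$ grows like one in $2n$.

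\textbf{The fibre direction uses a different tool.} There, $*$-restriction to $\Gr_B$ amounts (contracting $S_\nu$ to $t^\nu$) to restriction to stalks at the $T$-fixed points. So the relevant input is equivariant localization plus freeness of $I\!H^*$, not hyperbolic localization. You would still have to combine this with the base direction through $H^*(\Bun_G\times C)\otimes_{R^W[\hbar]}I\!H^*(\Hk^{\loc}_{G,\leq\lambda})$, and none of that is addressed.

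\textbf{How the paper avoids this.} The paper never needs such an injectivity. It uses functoriality only in the valid direction, along $G\to\GL_N$. The pullbacks of $H^*(\mathbb{B}(L^+\GL_N\rtimes\Aut(D)))$ generate $H^*(\mathbb{B}(L^+G\rtimes\Aut(D)))$, so with multiplicativity it suffices to treat $\GL_N$. For $\GL_N$ it passes to the module version and uses propagation under convolution to reduce to $\cK=\IC_{(1,0,\dots,0)}$. That case is a direct computation on colength-one modifications \cite[Theorem\,3.2.1]{FYZvolume}.

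The only faithfulness input is that the module version for IC sheaves implies Theorem~\ref{thm:universalcommutator}. This rests on a local fact, Ginzburg's theorem that $H^*(\Gr_{G,\leq\lambda})$ acts faithfully on $I\!H^*(\Gr_{G,\leq\lambda})$, plus a freeness argument. Note also that your shortcut for this direction ($\cK=\uk$, $m=1$) fails as stated. $\Shv(\Hk_G^{\loc})^{\heartsuit}$ is the perverse heart, and constant sheaves on Schubert varieties are in general not perverse.
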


To see the equivalence, in one direction, since $\D_{\Hk,!}f^*_{\Hk}([\alpha]m)=\D_{\Hk,!}f^*_{\Hk}([\alpha])\cdot \pr_1^* f_{\Hk}^*m$, we know that \eqref{eq:universalcommutatormodule} can be obtained from the identity \eqref{eq:universalcommutator} by acting on $\pr_1^*f_{\Hk}^* m$. Therefore, Theorem \ref{thm:universalcommutator} implies Theorem \ref{thm:universalcommutatormodule}. 

Conversely, assume Theorem \ref{thm:universalcommutatormodule} is true. To see \eqref{eq:universalcommutator}, we only need to show that any element $\alpha\in H^*(\Hk_G\times C)$ annihilating $\pr_1^*f_{\Hk}^*m\in I\!H^*(\Hk_{G,\leq\lambda}\times C)$ for all $\lambda\in X_*(T)_+$ and $m\in I\!H^*(\Hk_{G,\leq\lambda}^{\loc})$ is zero. Since \[H^*(\Hk_G\times C)=\lim_{\lambda} H^*(\Hk_{G,\leq\lambda}\times C),\] we only need to show that for any $\lambda\in X_*(T)_+$, any element $\alpha\in H^*(\Hk_{G,\leq\lambda}\times C)$ annihilating \[\pr_1^*f_{\Hk}^* I\!H^*(\Hk_{G,\leq\lambda}^{\loc})\sub I\!H^*(\Hk_{G,\leq\lambda}\times C)\] is zero. By Künneth formula, we are reduced to show that the annihilator of \[f_{\Hk}^*I\!H^*(\Hk_{G,\leq\lambda}^{\loc}) \sub I\!H^*(\Hk_{G,\leq\lambda})\] in $H^*(\Hk_{G,\leq\lambda})$ is zero. Note that we have \[H^*(\Hk_{G,\leq\lambda})=H^*(\Bun_G\times C)\otimes_{R^W[\hbar]} H^*(\Hk_{G,\leq\lambda}^{\loc})\] and \[I\!H^*(\Hk_{G,\leq\lambda})=H^*(\Bun_G\times C)\otimes_{R^W[\hbar]} I\!H^*(\Hk_{G,\leq\lambda}^{\loc}).\]  Since the map $H^*(\Hk_{G,\leq\lambda}^{\loc})\to \End_{R^W[\hbar]}(I\!H^*(\Hk_{G,\leq\lambda}^{\loc}))$ is injective. We only need to show that \[Q:=\coker(H^*(\Hk_{G,\leq\lambda}^{\loc})\to \End_{R^W[\hbar]}(I\!H^*(\Hk_{G,\leq\lambda}^{\loc})))\] is a free $R^W[\hbar]$-module. By \cite[Theorem\,1.5]{ginzburg1998loopgrassmanniancohomologyprincipal}, we know \[[H^*(\Hk_{G,\leq\lambda}^{\loc})\to \End_{R^W[\hbar]}(I\!H^*(\Hk_{G,\leq\lambda}^{\loc}))]\otimes_{R^W[\hbar]}k=[H^*(\Gr_{G,\leq \lambda})\to \End(I\!H^*(\Gr_{G,\leq\lambda}))]\] is an injection. Since both $\End_{R^W[\hbar]}(I\!H^*(\Hk_{G,\leq\lambda}^{\loc}))$ and $H^*(\Hk_{G,\leq\lambda}^{\loc})$ are free $R^W[\hbar]$-modules, we know that $\mathrm{Tor}^{\leq -1}_{R^W[\hbar]}(Q,k)=0$. Since $Q$ is a finitely generated graded $R^W[\hbar]$-module, this implies that $Q$ is a free $R^W[\hbar]$-module.

\subsubsection{Propagation through convolution}
We claim that Theorem \ref{thm:universalcommutatormodule} is stable under the convolution of $\Shv(\Hk_{G}^{\loc})^{\heartsuit}$, that is, assuming \eqref{eq:universalcommutatormodule} holds for $m_1\in H^*(\cK_1)$ and $m_2\in H^*(\cK_2)$ for $\cK_1,\cK_2\in\Shv(\Hk_{G}^{\loc})^{\heartsuit}$, one can deduce that \eqref{eq:universalcommutatormodule} holds for $m_1\otimes m_2\in H^*(\cK_1 * \cK_2)\cong H^*(\cK_1)\otimes_{R^W[\hbar]} H^*(\cK_2)$.

Consider the global iterated Hecke stack $\widetilde{\Hk}_{G}$ defined by the Cartesian diagram \[\begin{tikzcd}
    \widetilde{\Hk}_{G} \ar[r, "\rp"] \ar[d, "\lp"] & \Hk_G \ar[d, "\lh"] \\
    \Hk_G \ar[r, "\rh"] & \Bun_G\times C
\end{tikzcd}.\] We have three maps $\lh_2,h_2,\rh_2:\widetilde{\Hk}_G\to\Bun_G\times C$ defined by $\rh_2:=\rh\circ\rp$, $\lh_2:=\lh\circ\lp$, and $h_2:=\lh\circ\rp=\rh\circ\lp$. We also have a map $m:\widetilde{\Hk}_{G}\to \Hk_G$ given by composition of modifications. We have a map $\D_{\widetilde{\Hk}}=(\id,l\circ\rp):\widetilde{\Hk}\to\widetilde{\Hk}\times C$.

Similarly, we have the local iterated Hecke stack defined by the Cartesian diagram \[\begin{tikzcd}
    \widetilde{\Hk}_{G}^{\loc} \ar[r, "\rp_{\loc}"] \ar[d, "\lp_{\loc}"] & \Hk_G^{\loc} \ar[d, "\lh_{\loc}"] \\
    \Hk_G^{\loc} \ar[r, "\rh_{\loc}"] & {\mathbb{B}(L^+G\rtimes\Aut(D))}
\end{tikzcd}\] and maps $\lh_{\loc,2},h_{\loc,2},\rh_{\loc,2}:\widetilde{\Hk}_{G}^{\loc}\to \mathbb{B}(L^+G\rtimes\Aut(D))$, $m_{\loc}:\widetilde{\Hk}_{G}^{\loc}\to \Hk_G^{\loc}$. We have a map $f_{\Hk,2}:\widetilde{\Hk}_{G}\to\widetilde{\Hk}_{G}^{\loc}$. We have $\cK_1 *\cK_2=m_{\loc,!}(\lp_{\loc}^*\cK_1\otimes \rp_{\loc}^*\cK_2)$. 

We would like to prove \[\lf^*\alpha\pr_1^*f_{\Hk}^* (m_1\otimes m_2)-\rf^*\alpha\pr_1^*f_{\Hk}^* (m_1\otimes m_2)=\D_{\Hk,!}f^*_{\Hk}([\alpha]m_1\otimes m_2)\in H^*(\Hk_G\times C,\pr_1^*f_{\Hk}^*(\cK_1*\cK_2)).\] Since we have an isomorphism $r_{m}:H^*(\Hk_G\times C,\pr_1^*f_{\Hk}^*(\cK_1*\cK_2))\cong H^*(\widetilde{\Hk}_G\times C,\widetilde{\pr}_1^*f_{\Hk,2}^*(\lp_{\loc}^*\cK_1\otimes\rp_{\loc}^*\cK_2))$ where $\widetilde{\pr}_1:\widetilde{\Hk}_G\times C\to \widetilde{\Hk}_G$ is the projection map to the first factor. The identity above is equivalent to \[r_{m}(\lf^*\alpha\pr_1^*f_{\Hk}^*( m_1\otimes m_2)-\rf^*\alpha\pr_1^*f_{\Hk}^* (m_1\otimes m_2))=r_{m}\D_{\Hk,!}f^*_{\Hk}([\alpha]m_1\otimes m_2).\]

Note that \[\begin{split}
    &r_{m}(\lf^*\alpha\pr_1^*f_{\Hk}^* (m_1\otimes m_2)-\rf^*\alpha\pr_1^*f_{\Hk}^* (m_1\otimes m_2)) \\
    =&(\lp\times\id)^*(\lf^*\alpha\pr_1^*f_{\Hk}^* m_1-\rf^*\alpha\pr_1^*f_{\Hk}^* m_1)\cup (\rp\times\id)^* \pr_1^*f_{\Hk}^*m_2 \\ &+  (\lp\times\id)^* \pr_1^*f_{\Hk}^*m_1 \cup (\rp\times\id)^*(\lf^*\alpha\pr_1^*f_{\Hk}^* m_2-\rf^*\alpha\pr_1^*f_{\Hk}^* m_2) \\
    =&(\lp\times\id)^*\D_{\Hk,!}f^*_{\Hk}([\alpha]m_1)\cup (\rp\times\id)^* \pr_1^*f_{\Hk}^*m_2 +  (\lp\times\id)^* \pr_1^*f_{\Hk}^*m_1 \cup (\rp\times\id)^*\D_{\Hk,!}f^*_{\Hk}([\alpha]m_2) \\
    =&\D_{\widetilde{\Hk},!}(\lp^*f_{\Hk}^*([\alpha]m_1)\cup \rp^*f_{\Hk}^* m_2+ \lp^*f_{\Hk}^*m_1\cup \rp^*f_{\Hk}^*([\alpha]m_2) ) \\
    =&\D_{\widetilde{\Hk},!}f_{\Hk,2}^*(\lp_{\loc}^*([\alpha]m_1)\cup \rp^*_{\loc} m_2 + \lp_{\loc}^*m_1\cup \rp_{\loc}^*([\alpha]m_2))\\
    =&\D_{\widetilde{\Hk},!}f_{\Hk,2}^*r_{m_{\loc}}([\alpha]m_1\otimes m_2) \\
    =&r_{m}\D_{\Hk,!}f_{\Hk}^*([\alpha]m_1\otimes m_2)
.\end{split}\] Here, we are using the isomorphism $r_{m_{\loc}}:H^*(\Hk_G^{\loc},\cK_1*\cK_2)\cong H^*(\widetilde{\Hk}_G^{\loc},\lp_{\loc}^*\cK_1\otimes\rp_{\loc}^*\cK_2)$ and the identity $m_{\loc}^*[\alpha]=\lp_{\loc}^*[\alpha]+\rp_{\loc}^*[\alpha]$ in the next to the last step. This finishes the proof.

\subsubsection{Conclude the proof}
First, note that the images of $\phi_{\loc}^* :H^*(\mathbb{B}(L^+\GL_N\times\Aut(D)))\to H^*(\mathbb{B}(L^+G\rtimes\Aut(D)))$ for all homomorphisms $G\to\GL_N$ generate $H^*(\mathbb{B}(L^+G\rtimes\Aut(D)))$ as an algebra. By the functoriality and multiplicativity, we are reduced to proving Theorem \ref{thm:universalcommutator} for $\GL_N$. By the equivalent formulation using perverse sheaves and propagation under convolution, we are reduced to showing Theorem \ref{thm:universalcommutatormodule} when $\cK=\IC_{(1,0,\cdots,0)}\in \Shv(\Hk^{\loc}_{\GL_N})^{\heartsuit}$. This case follows from \cite[Theorem\,3.2.1]{FYZvolume}, which can also be verified via a straightforward computation.

\section{Langlands dual description of eigenweights}\label{sec:langlandsdual}
In this section, we explain the local volume datum associated with the determinant line bundle $\cL_{\det}\in \Pic(\Hk_G^{\loc})_{\QQ}$ and a coweight $\lambda\in X_*(T)_+$ as promised in Example \ref{eg:localvolumedatumdet}. Moreover, when $G$ is simple, we give a description of the constants $b_{\lambda},\epsilon_{\lambda,i}$ involved in Theorem \ref{thm:mainsame} on the Langlands dual side.

\subsection{Local volume datum for the determinant line bundle}\label{sec:detlbloc}
We now consider a specific choice of local volume datum defined in Definition \ref{def:localvolumedatum}. 

\subsubsection{The determinant line bundle}\label{sec:detlb}

We first recall the construction of the determinant line bundle $\cL_{\det}\in \Pic(\Hk_G^{\loc})_{\QQ}$. It is characterized as follows. Let 
\[
q:L^+G\bs LG \to \Hk_G^{\loc}
\]
be the natural map. We require that $q^*\cL_{\det}\in \Pic(L^+G\bs LG)\sub \Pic(L^+G\bs LG)_{\QQ}$ is ample and generates $\Pic(L^+G\bs LG^e)$ for each $e\in \pi_0(LG)$. Here, $LG^e$ is the connected component of $LG$ indexed by $e$. Moreover, $\cL_{\det}$ is relatively ample with respect to the map 
\[
\rh_{\loc}:\Hk_G^{\loc}\to \mathbb{B}(L^+G\rtimes\Aut(D))
\]
in \eqref{diag:localtoglo}.

We now recall an explicit construction of $\cL_{\det}$. For each finite-dimensional representation $V\in\Rep(G)$ with representation map $r_V:G\to \GL(V)$, we obtain an induced map 
\[
\phi_V:\Hk_G^{\loc}\to \Hk_{\GL(V)}^{\loc}.
\]
We identify $\Hk_{\GL(V)}^{\loc}$ with the moduli stack of modifications $\{\cE_1\dashrightarrow \cE_2\}$ of vector bundles of rank $\dim(V)$ over $D$ considered modulo $\Aut(D)$, and we adopt the convention that the map 
\[
\rh_{\GL(V),\loc}:\Hk_{\GL(V)}^{\loc}\to \mathbb{B}(L^+\GL(V)\rtimes\Aut(D))
\]
sends $(\cE_1\dashrightarrow \cE_2)$ to $\cE_2$. Let $\cL_{\Std}$ be the line bundle on $\Hk_{\GL(V)}^{\loc}$ whose fiber at $(\cE_1\dashrightarrow \cE_2)$ is
\[
(\cE_2:\cE_1):=\det(\cE_2/\cE_1\cap\cE_2)\otimes\det(\cE_1/\cE_1\cap \cE_2)^{-1}.
\]
Define $\cL_V:=\phi_V^*\cL_{\Std}\in \Pic(\Hk_G^{\loc})$.

Let $\Ad\in\Rep(G)$ denote the adjoint representation. The above construction yields a line bundle $\cL_{\Ad}\in \Pic(\Hk_G^{\loc})$. We then define the determinant line bundle by
\[
\cL_{\det}:=\cL_{\Ad}^{\otimes 1/(2h_G^{\vee})}\in \Pic(\Hk_G^{\loc})_{\QQ},
\]
where $h_G^{\vee}$ is the dual Coxeter number of $G$.

\subsubsection{Local volume data}
For each dominant coweight $\lambda\in X_*(T)_+$, define \begin{equation}\label{eq:dlambda} d_{\lambda}:=\langle 2\rho, \lambda\rangle.\end{equation} We use $\IC_{\l}\in \Shv(\Hk_G^{\loc})$ to denote the intersection complex of the closed Schubert cell $\Hk_{G,\leq\lambda}^{\loc}\sub \Hk_G^{\loc}$ normalized such that $\IC_{\lambda}\langle -d_{\lambda}\rangle|_{\Hk_{G,\lambda}^{\loc}}$ lies in the heart of the naive $t$-structure. Under the isomorphism \begin{equation}
    \begin{split}
        \Hom^0(\lh_{\loc,!}\IC_{\lambda}\langle d_{\lambda}\rangle, \uk_{\mathbb{B}(L^+G\rtimes\Aut(D))})\cong \Hom^0(\lh_{\loc,!}\uk_{\Hk_{G,\leq\lambda}^{\loc}}\langle 2d_{\lambda}\rangle,\uk_{\mathbb{B}(L^+G\rtimes\Aut(D))})\cong k
    \end{split}
,\end{equation} we use $[\Hk_{G,\leq\lambda}^{\loc}]^{BM}\in \Hom^0(\lh_{\loc,!}\IC_{\lambda}\langle d_{\lambda}\rangle, \uk_{\mathbb{B}(L^+G\rtimes\Aut(D))})$ to denote the element corresponding to $1\in k$, which is the \emph{(relative) fundamental (Borel-Moore homology) class} of $\Hk_{G,\l}^{\loc}$ (over $\lh_{\loc}$).

Consider the first Chern class $c_1(\cL_{\det})\in H^2(\Hk_{G}^{\loc})$. We define \[\frc_{\lambda,\det}^{\loc}:= c_1(\cL_{\det})^{d_{\lambda}+1}\cdot [\Hk_{G,\leq\lambda}^{\loc}]^{BM}   \in \Cor_{\Hk_{G}^{\loc},\IC_{\lambda}\langle -d_{\lambda} - 2\rangle}(\uk_{\mathbb{B}(L^+G\rtimes\Aut(D))}, \uk_{\mathbb{B}(L^+G\rtimes\Aut(D))}) .\] In other words, the element \[\frc_{\lambda,\det}^{\loc}\in \Hom^0(\lh_{\loc,!}\IC_{\lambda}\langle -d_{\lambda} - 2\rangle, \uk_{\mathbb{B}(L^+G\rtimes\Aut(D))})\] is given by the composition \[\begin{split}
    \lh_{\loc,!}\IC_{\lambda}\langle -d_{\lambda} - 2\rangle \xrightarrow{\lh_{\loc,!}(-\cup c_1(\cL_{\det})^{d_{\lambda}+1})} \lh_{\loc,!}\IC_{\lambda}\langle d_{\lambda}\rangle \xrightarrow{[\Hk_{G,\leq\lambda}^{\loc}]^{BM}} \uk_{\mathbb{B}(L^+G\rtimes\Aut(D))}
\end{split}.\]

Consider also the (relative) fundamental (cohomology) class (over $\rh_{\loc}$) \[\frd_{\lambda}^{\loc}:=[\Hk_{G,\leq\lambda}^{\loc}]\in \Hom^0(\uk_{\mathbb{B}(L^+G\rtimes\Aut(D))}, \rh_{\loc,!}\IC_{\lambda}\langle -d_{\lambda}\rangle),\] that is, the element corresponds to $1\in k$ under the isomorphism \[\Hom^0(\uk_{\mathbb{B}(L^+G\rtimes\Aut(D))}, \rh_{\loc,!}\IC_{\lambda}\langle -d_{\lambda}\rangle)\cong \Hom^0(\uk_{\mathbb{B}(L^+G\rtimes\Aut(D))},\rh_{\loc,*}\uk_{\Hk_{G,\leq\lambda}^{\loc}})\cong k.\]

\begin{defn}\label{def:localvolumedatumdet}
    The \emph{local volume datum given by the determinant line bundle} is the triple \[v_{\l,\det}=(\IC_{\l}, \frc^{\loc}_{\l,\det},\frd^{\loc}_{\l})\] in which each term is defined as above.
\end{defn}

\subsection{Translation to the spectral side}\label{sec:transtospec}
In this section, we describe the constants $b_{\lambda}$ and $\epsilon_{\lambda,i}$ appearing in Theorem \ref{thm:mainsame} in terms of the Langlands dual group. For simplicity, we assume that $G$ is almost simple.

\subsubsection{Notations}
Let $\kappa_{\min}:\frgc\times\frgc\to k$ be the $\Gc$-invariant non-degenerate bilinear form normalized such that $\kappa_{\min}(\alpha_{\mathrm{long}},\alpha_{\mathrm{long}})=2$ where $\alpha_{\mathrm{long}}\in X_*(\Tc)$ is any long coroot of $\Gc$. 

For each finite-dimensional representation $V\in \Rep(\Gc)$, let $r_V:\Gc\to \GL(V)$ be the representation map and $dr_V:\frgc\to \mathfrak{gl}(V)$ its differential. This defines a $\Gc$-invariant bilinear form
\begin{equation}\label{eq:kappaV}
\kappa_V:\frgc\times\frgc \to k,\qquad \kappa_V(X,Y)=\tr\bigl(dr_V(X)\,dr_V(Y)\bigr)
\end{equation}
for $X,Y\in \frgc$. Let $\Ad\in \Rep(\Gc)$ denote the adjoint representation. Then
\begin{equation}\label{eq:kappamin}
\kappa_{\min}=\frac{\kappa_{\Ad}}{2h^{\vee}_{\Gc}l_G}.
\end{equation}
Here $l_G$ is the lacing number, equal to $1$ for types $ADE$, $2$ for types $BCF$, and $3$ for type $G$, and $h_{\Gc}^{\vee}$ is the dual Coxeter number of $\frgc$.

We use $\check{\alpha}_j\in X^*(\Tc),\, 1\leq j\leq n$ to denote the simple roots of $\Gc$. By restriction, we can regard $\kappa_{\min}$ as a $W$-invariant bilinear form on $\frtc$ (hence also $\frt\cong \frtc^*$) satisfying $\kappa_{\min}(\check{\alpha}_{\mathrm{short}},\check{\alpha}_{\mathrm{short}})=2$ where $\check{\alpha}_{\mathrm{short}}\in X^*(\Tc)$ is any short root of $\Gc$. That is, the bilinear form $\kappa_{\min}$ corresponds to the basic bilinear form on $\frg$.

We use $\frgc_{\ZZ}$ to denote the Chevalley form of $\frgc$. Choose Chevalley generators $x_i\in \frgc_{\ZZ,\alpha_i},~1\leq i\leq n$ which are unique up to signs. Let $e=\sum_{i=1}^n \kappa_{\min}(\check{\alpha}_i,\check{\alpha}_i)x_i/2\in \frgc_{\ZZ}$. Let $\rho\in X_*(\Tc)_{\QQ} \sub \frtc$ be the half sum of all positive coroots of $\Gc$. This determines an $\mathfrak{sl}_2$-triple $(e,2\rho,f)$ for a unique $f\in \frgc$. 

Using the non-degenerate bilinear form $\kappa_{\min}$, we make identification $\frgc\cong \frgc^*$. Consider the Kostant slice $\frsc=e+\frgc_f\sub \frgc$. We have a canonical grading $\frgc_f=\bigoplus_{d}\frgc_{f,d}$ where $[\rho,-]$ restricts to $\cdot d$ on $\frgc_{f,d}$.\footnote{The $d$'s appearing here are the negative of \emph{exponents} of $\frgc$.} Then we have $\frgc_{f}=\bigoplus_{j=1}^n \frgc_{f,-d_j+1}$ when $G$ is not of type $D_n$ and $n$ is even. 

Let $\frcc=\frgc\sslash \Gc$ be the Chevalley quotient and $\chi:\frgc\to \frcc$ be the quotient map. It induces an isomorphism $\chi|_{\frsc}: \frsc\isom \frcc$. Moreover, we have $\frsc\cong \frcc\cong\mathbb{V}$ as graded vector spaces after properly normalizing the grading.



Let $J_{\Gc}\to \frsc$ be the regular centralizer whose fiber at $X\in \frsc$ is the centralizer subgroup $C_{\Gc}(X)\sub \Gc$. The map $\chi$ induces a map $d\chi:\frgc\times_{\frcc} T^*\frcc\to T^*\frgc\cong \frgc\times \frgc^*\cong \frgc\times\frgc$ which restricts to an isomorphism $d\chi|_{\frsc}:T^*\frsc\isom \Lie J_{\Gc}$.

The bilinear form $\kappa_{\min}$ restricts to a non-degenerate bilinear form $\kappa:=\kappa_{\min}|_{\frgc_f\times\frgc_e}:\frgc_f\times \frgc_e \to k$. Using this, we make identification $\frgc_e\cong \frgc_f^*$. Note that we also have a grading $\frgc_e=\bigoplus_d \frgc_{e,d}$ such that we have $(\frgc_{e,-d})^*\cong \frgc_{f,d}$.

We use $V_{\lambda}\in \Rep(\Gc)$ to denote the irreducible representation of $\Gc$ with highest weight $\lambda\in X_*(T)_+$. We use $V_{\lambda,\ZZ}\sub V_{\lambda}$ to denote the standard representation of $\Gc_{\ZZ}$ with highest weight $\lambda$, which is a $\ZZ$-lattice inside $V_{\lambda}$. We use $u_{\lambda,-}\in V_{\lambda,\ZZ}$ to denote a generator of the lowest weight submodule, and $u_{\lambda,+}^*\in V_{\lambda,\ZZ}^*$ to denote a generator of the lowest weight submodule of $V_{\lambda,\ZZ}^*=\Hom_{\ZZ}(V_{\lambda,\ZZ},\ZZ)$. The generators $u_{\lambda,-}$ and $u_{\lambda,+}^*$ are normalized such that $\langle e^{d_{\lambda}}\cdot u_{\lambda,-},u_{\lambda,+}^*\rangle \geq 0$. This determines the pair $u_{\lambda,-}, u_{\lambda,+}^*$ up to a simultaneous sign change. Let $V_{\lambda}=\bigoplus_{\mu\in X^*(\Tc)}V_{\lambda}(\mu)$ be the weight space decomposition.

\subsubsection{Spectral description of $\epsilon_{\lambda,j}$}\label{sec:Espectral}
Now we describe the eigenweights $\epsilon_{\lambda,j}$ involved in Theorem \ref{thm:mainsame} (see also Example \ref{eg:Edet}) as some invariants attached to the representation $V_{\lambda}\in \Rep(\Gc)$.

Consider the bilinear form $\kappa_{\lambda}:\frgc_f\times\frgc_e\to k$ defined by \begin{equation}
    \kappa_{\lambda}(X,Y)=\langle \sum_{s=0}^{d_{\lambda}}  e^sXYe^{d_{\lambda}-s}\cdot u_{\lambda,-},u_{\lambda,+}^* \rangle.
\end{equation} This determines an endomorphism $E_{\lambda}\in\End(\frgc_f)= \End(\frsc)$ defined such that $\kappa(E_{\lambda}(Y),X)=\kappa_{\lambda}(Y,X)$ for any $X\in \frgc_e$ and $Y\in \frgc_f$.

\begin{thm}\label{thm:Espectral}
    Under the natural identification $\mathbb{V}\cong \frsc$, we have $E_{v_{\lambda,\det}}=-E_{\lambda}$.
\end{thm}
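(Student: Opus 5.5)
We will compute $E_{v_{\lambda,\det}}$ by passing through the equivariant derived Satake equivalence of Bezrukavnikov--Finkelberg, which identifies $H^*_{L^+G\rtimes\mathbb{G}_m}(\Gr_G,\IC_\lambda)$ as a module over $H^*(\mathbb{B}(L^+G\rtimes\Aut(D)))=R^W[\hbar]$ with a space of sections on the Kostant slice side. Concretely, $H^*(\Hk^{\loc}_{G,\leq\lambda},\IC_\lambda)$ is identified with $\Gamma(\frsc_\hbar,V_\lambda\otimes\mathcal{O})^{J}$-type data, i.e.\ with $V_\lambda\otimes k[\frsc][\hbar]$ restricted along $e+\frgc_f$ and twisted by the $\hbar$-deformed regular centralizer action. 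Under this dictionary:
\begin{itemize}
\item $\lh_{\loc}^*$ and $\rh_{\loc}^*$ acting on $H^*(\IC_\lambda)$ correspond to the two actions of $k[\frcc][\hbar]$ on bimodules over the asymptotic (quantized) Kostant--Whittaker algebra, and their difference divided by $\hbar$ (i.e.\ $[\alpha]$) becomes, modulo $\hbar$, the action of the Hamiltonian vector field of $\alpha$, namely $d\chi$ of $d\alpha\in T^*\frsc$ landing in $\Lie J_{\Gc}$ acting on $V_\lambda$. For $\alpha\in\mathbb{V}\cong\frsc^*$ linear, this is the action of the corresponding element $Y\in\frgc_e$ (using $\kappa$ to identify $\frgc_e\cong\frgc_f^*$), via Ginzburg's description of $H^*(\Gr)$ as $U(\frgc_e)$.
\item The class $\frd^{\loc}_\lambda=[\Hk^{\loc}_{G,\leq\lambda}]$, the unit of $H^*(\IC_\lambda\langle -d_\lambda\rangle)$, corresponds to the lowest weight vector $u_{\lambda,-}$, and the fundamental BM class $[\Hk^{\loc}_{G,\leq\lambda}]^{BM}$ corresponds to pairing with the highest weight covector $u^*_{\lambda,+}$; this is checked by degree: these are the unique (up to scalar) elements in extreme degrees $\pm d_\lambda$.
\item $c_1(\cL_{\det})$ acts on $H^*(\IC_\lambda)$, modulo $\hbar$ and at the origin of $\frsc$, as the principal nilpotent $e$ (Ginzburg: cup product with the determinant class is $e$, with the normalization $\kappa_{\min}$ matching $\cL_{\Ad}^{1/2h^\vee}$ via \eqref{eq:kappamin}).
\end{itemize}

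With these, $E_v$ on $\mathbb{V}=\Gr^1_{\aug}$ is computed from \eqref{eq:gammaformulaloc} at $\hbar=0$ and at the augmentation point: for $\alpha\in\mathbb{V}$, $\nabla_v(\alpha)$ modulo $F^2_{\aug}$ is a linear functional on $\frsc$ given by evaluating $\Gamma^{\loc}_{[\alpha]\cdot v}(1)$ to first order at $e$. Unwinding, $\Gamma^{\loc}_{[\alpha]\cdot v}(1)$ is $\langle c_1(\cL_{\det})^{d_\lambda+1}\,[\alpha]\,u_{\lambda,-},u^*_{\lambda,+}\rangle$ computed over the point $e+X\in\frsc$, where $c_1(\cL_{\det})$ acts by $e+X$. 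Expanding $(e+X)^{d_\lambda+1}Y$ to first order in $X$, only terms with a single $X$ survive since $e^{d_\lambda+1}Y u_{\lambda,-}$ has wrong weight to pair nontrivially after projecting; precisely the first-order term is $\sum_s e^sXYe^{d_\lambda-s}$ after commuting $Y$ (which lies in $\frgc_e$, hence commutes with $e$) into position. This gives exactly $\kappa_\lambda(X,Y)$, and hence $E_v=-E_\lambda$, the sign coming from the convention $E_v|_{\mathbb{V}_i}=-\epsilon_{v,i}$ versus the orientation of $[\alpha]=\hbar^{-1}(\lh^*-\rh^*)$.

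The main obstacle is the first bullet: making precise, with correct normalizations and signs, that $[\alpha]$ corresponds to the $\frgc_e$-action (including the identification $\mathbb{V}\cong\frsc$ through $\kappa_{\min}$, not $\kappa_{\Ad}$). I would first verify it for $\GL_N$ and $\lambda$ minuscule by direct computation, matching \cite[Theorem\,3.2.1]{FYZvolume}, then propagate via the convolution/multiplicativity arguments of \S\ref{sec:proofcomm}, which are compatible with the tensor structure of Satake.
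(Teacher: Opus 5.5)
\textbf{Where the proposal matches the paper.} Your outline follows the paper's proof closely. It uses the Bezrukavnikov--Finkelberg description of $H^*(\Hk_G^{\loc})$ and of $I\!H^*(L^+G\bs\Gr_{G,\leq\lambda})\cong V_\lambda\otimes\cO(\frsc)$. It identifies $c_1(\cL_{\det})$ with the tautological section $X\mapsto X$ of $\Lie J_{\Gc}$, and the two fundamental classes with $u_{\lambda,-}$ and $u^*_{\lambda,+}$. It then extracts the linear term of $\langle (e+X)^{d_\lambda+1}[\alpha]\,u_{\lambda,-},u^*_{\lambda,+}\rangle$. The expansion step is fine. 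The clean reason is that $e^{d_\lambda+1}$ acts by zero on all of $V_\lambda$. This also kills the $X$-dependence of the section $d\alpha$ of $\Lie J_{\Gc}$ at $e+X$, which you silently replace by its value $Y$ at $e$.

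\textbf{The gap: the sign.} The sign is the only content of the statement beyond $E_{v_{\lambda,\det}}=\pm E_\lambda$, and you flag it as the main obstacle but do not resolve it. Your first bullet identifies $[\alpha]$ modulo $\hbar$ with $+d\alpha$, i.e.\ with $+Y$, so your computation as written gives $E_{v_{\lambda,\det}}=+E_\lambda$. You then obtain the minus sign from the convention $E_v|_{\mathbb{V}_i}=-\epsilon_{v,i}\cdot\id$. That convention only defines the numbers $\epsilon_{v,i}$ from $E_v$. It cannot change an identity between the endomorphisms $E_{v_{\lambda,\det}}$ and $E_\lambda$. In effect you are reasoning backwards from the expected positivity of the eigenweights, which Corollary \ref{cor:epsilongenformula} derives \emph{from} the theorem.

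\textbf{Where the sign actually comes from.} It must come out of the dictionary itself. Take the orientation $[\alpha]=\hbar^{-1}(\lh^*_{\loc}\alpha-\rh^*_{\loc}\alpha)$, and swap $\lh,\rh$ to match the conventions of \cite{BF} (harmless, since $E_\lambda=E_{-w_0(\lambda)}$). Then the image of $[x]$ in $H^*(L^+G\bs LG/L^+G)\cong\cO(T\frcc)$ is $-dx$, not $dx$. The paper fixes this by a localization check. On the graded pieces of the filtration of $I\!H^*(T\bs\Gr_{G,\leq\lambda}/\Gm)$ by semi-infinite orbits, cup product with $[x]$ is multiplication by minus a directional derivative of $x$.

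\textbf{Your proposed remedy does not close the gap.} \cite[Theorem\,3.2.1]{FYZvolume} is the input for the local--global commutator relation (Theorem \ref{thm:universalcommutator}), not for the Bezrukavnikov--Finkelberg dictionary. Matching against it therefore does not determine this sign.

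\textbf{A related normalization issue.} Saying the two fundamental classes are ``unique up to scalar'' is not enough. The product of the two scalars rescales $E_{v_{\lambda,\det}}$, so it must be shown to equal $1$. This is where the integral normalizations of $u_{\lambda,-}$, $u^*_{\lambda,+}$ and of $e$ (via $\kappa_{\min}$) enter.
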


\begin{proof}
    During this proof, we exchange the role of $\lh$ and $\rh$ everywhere to match the convention in most literature, like \cite{BF}\cite{yun2011integral}. In particular, we change the definition of $\cL_{\det}$ such that it is relatively ample over $\lh_{\loc}$. Since $E_{\lambda}=E_{-w_0(\lambda)}$, we can still work with $\lambda$ instead of $-w_0(\lambda)$.

    Recall that we use $\kappa_{\min}$ to make identification $\frcc^*\cong\frcc$.
    By \cite[Theorem\,1(b)]{BF}, there is a canonical graded isomorphism $H^*(\Hk_G^{\loc})\cong \bigoplus_{z\in Z(\Gc)} \cO(D_{\frcc}(\frcc\times \frcc))$.\footnote{By $\pi_0(\Hk_G^{\loc})\cong\pi_1(G)\cong X^*(Z(\Gc))$, we get a connected component decomposition $\Hk_G^{\loc}=\coprod_{\chi\in X^*(Z(\Gc))}\Hk_G^{\loc,\chi}$. Note that different $H^*(\Hk_{G}^{\loc,\chi})$ for $\chi \in X^*(Z(\Gc))$ are canonically isomorphic. This gives a canonical diagonal embedding $H^*(\Hk_{G}^{\loc,\triv})\sub H^*(\Hk_{G}^{\loc})$ for $\triv\in X^*(Z(\Gc))$. Under the normalization we choose, this embedding corresponds to $\cO(D_{\frcc}(\frcc\times \frcc))\sub \bigoplus_{z\in Z(\Gc)}\cO(D_{\frcc}(\frcc\times\frcc))$ which is the inclusion via the direct summand indexed by the identity element $e\in Z(\Gc)$.} 
    Here, we use $D_{\frcc}(\frcc\times \frcc)$ to denote the deformation to the normal cone of the scheme $\frcc\times\frcc$ along the closed subscheme $\frcc\sub \frcc\times\frcc$ embedding along the diagonal. Under this identification, the natural maps $\lh_{\loc}^*,\rh_{\loc}^*:H^*(\mathbb{B}(L^+G\rtimes\Aut(D)))\to H^*(\Hk_G^{\loc}) $ get identified with pull-back maps for the natural projections $p_1,p_2:D_{\frcc}(\frcc\times \frcc)\to \frcc\times\AA^1$ where we identify $\AA^1\cong\Spec k[\hbar]$.

    
    The canonical isomorphism $H^*(\Hk_G^{\loc})\otimes_{k[\hbar]}k\cong H^*(L^+G\bs LG/L^+G)$ gets identified with the isomorphism $\bigoplus_{z\in Z(\Gc)}\cO(D_{\frcc}(\frcc\times \frcc)\times_{\AA^1}\{0\})\cong \bigoplus_{z\in Z(\Gc)}\cO(T\frcc)\cong \bigoplus_{z\in Z(\Gc)} U(\Lie J_{\Gc})$. Under this identification, for $x\in \cO(\frcc)$, the image of $[x]=(\rh_{\loc}^*x-\lh_{\loc}^*x)/\hbar$ in $H^*(L^+G\bs LG/L^+G)$ is the differential $1$-form $-dx \in \Gamma(\frcc,\Omega_{\frcc})\sub \cO(T\frcc)\sub \bigoplus_{z\in Z(\Gc)}\cO(T\frcc)$ where the last inclusion is via the direct summand indexed by the identity element $e\in Z(\Gc)$.\footnote{To see this subtle minus sign here, consider the filtration $F_{\geq \mu}I\!H^*(T\bs \Gr_{G,\leq\lambda}/\Gm)\sub I\!H^*(T\bs \Gr_{G,\leq\lambda}/\Gm)$ as in the proof of Theorem \ref{thm:bspectral}. One can check that taking cup product with the image of $[x]$ in $H^*(T\bs \Gr_G/\Gm)$ coincides with the multiplication by $-\partial_{\lambda}x\in \cO(\frtc)$ on the graded piece $\Gr_{\mu}^FI\!H^*(T\bs \Gr_{G,\leq\lambda}/\Gm)$ where $\partial_{\lambda}x$ is the partial derivative of $x\in \cO(\frcc)\sub \cO(\frtc)$ along $\lambda\in \frt\cong \frtc$.}
    The first Chern class $c_1(\cL_{\det})\in H^*(L^+G\bs LG/L^+G)$ is identified with the tautological section $e^{\univ}:=(X\in \frsc \mapsto X\in \Lie J_{\Gc})\in \Lie J_{\Gc}\sub U(\Lie J_{\Gc})$. 
    
    By \cite[Theorem\,4]{BF}, the graded $H^*(L^+G\bs LG/L^+G)$-module $I\!H^*(L^+G\bs \Gr_{G,\leq\lambda})$ gets identified with the graded vector space $V_{\lambda}\otimes \cO(\frsc)$ equipped with the tautological graded $\bigoplus_{z\in Z(\Gc)}U(\Lie J_{\Gc})$-action. Under this identification, we can choose the lowest weight generators properly such that $[\Hk_{G,\leq\lambda}^{\loc}]~\text{mod $\hbar$}= u_{\lambda,-}\otimes 1\in V_{\lambda}\otimes\cO(\frsc)$ and $[\Hk_{G,\leq\lambda}^{\loc}]^{BM}~\text{mod $\hbar$}=u_{\lambda,+}^*\otimes 1\in V_{\lambda}^*\otimes\cO(\frsc)$. 
    
    Therefore, the map $E_{v_{\lambda,\det}}:\frsc\to \frsc$ is determined as follows: For $x\in \frsc\cong \frsc^*$, we have \begin{equation}\label{eq:geopairing} \langle(e^{\univ})^{d_{\lambda} + 1}(-dx)\cdot (u_{\lambda,-}\otimes 1),u^*_{\lambda,+}\otimes 1\rangle =E_{v_{\lambda,\det}}(x)+\textup{other terms not in $\frsc^*\sub \cO(\frsc)$}.\end{equation}

    To simplify notations, we assume $G$ is not of type $D_{2n}$. (This case can be treated similarly) Under this assumption, we have $\frgc_{f}=\bigoplus_{j=1}^n\frgc_{f,-d_j+1}$ and each graded piece $\frgc_{f,-d_j+1}$ is one-dimensional. Therefore, we can choose non-zero elements $f_j\in \frgc_{f,-d_j+1}$, and we have $\frgc_{f,-d_j+1}=\Span(f_j)$. We have $e^{\univ}=e+\sum_{j} a_jf_j$ where $a_j\in (\frgc_{f,-d_j+1})^*\sub  \frsc^*$ are homogeneous generators of $\cO(\frsc)$ and $f_j\in \frgc_{f,-d_j+1}$. Since $e^{d_{\lambda}+1}$ acts by zero on $V_{\lambda}\otimes\cO(\frsc)$, from \eqref{eq:geopairing}, we get \begin{equation}
        E_{v_{\lambda},\det}(a_j)=-\langle\sum_{s=0}^{d_{\lambda}+1} e^s f_j e^{d_{\lambda}+1-s} (da_j)|_{e\in\frsc}\cdot u_{\lambda,-},u_{\lambda,+}^* \rangle\cdot a_j
    \end{equation} where $(da_j)|_{e\in\frsc}\in \frgc_{e,d_j-1}$. Therefore, we only need to show that $\kappa_{\min}((da_j)|_{e\in\frsc},f_j)=1$. This translates to the fact that $a_j$ regarded as a function on $\frgc$ has derivative at $e\in \frgc$ along direction $f_j$ equals $1$, which follows from the tautological relation $a_j(e+\alpha f_j)=\alpha$ for $\alpha\in k$.
\end{proof}

Theorem \ref{thm:Espectral} has the following immediate corollary:
\begin{cor}\label{cor:epsilongenformula}
    When $G$ is not of type $D_n$ for even $n$, we have $\epsilon_{\lambda,j}=\frac{\kappa_{\lambda}|_{\frgc_{f,-d_j+1}\otimes \frgc_{e,d_j-1}}}{\kappa_{\min}|_{\frgc_{f,-d_j+1}\otimes \frgc_{e,d_j-1}}}\in \QQ$ for $1\leq j\leq n$.
\end{cor}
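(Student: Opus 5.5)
The plan is to read the corollary off Theorem \ref{thm:Espectral} by a graded bookkeeping argument. Assume $G$ is not of type $D_n$ with $n$ even. Then, by \S\ref{sec:eigenweights}, the pieces $\mathbb{V}_j=\Qlbar(-d_j)$ are one-dimensional with distinct degrees, and $E_{v_{\lambda,\det}}|_{\mathbb{V}_j}=-\epsilon_{\lambda,j}\cdot\id$. Under $\mathbb{V}\cong\frsc\cong\frgc_f$, the piece $\mathbb{V}_j$ corresponds to the line $\frgc_{f,-d_j+1}$. Theorem \ref{thm:Espectral} gives $E_{v_{\lambda,\det}}=-E_\lambda$, so it suffices to show that $E_\lambda$ preserves each line $\frgc_{f,-d_j+1}$ and acts on it by the stated ratio of pairings.

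The first step is to check that $E_\lambda$ is graded, i.e.\ $\kappa_\lambda(\frgc_{f,a},\frgc_{e,b})=0$ unless $a+b=0$. Since $X,Y$ are $\operatorname{ad}\rho$-homogeneous and $e$ has $\rho$-weight $1$, the vector $e^sXYe^{d_\lambda-s}u_{\lambda,-}$ has $\rho$-weight $\langle\rho,-\lambda\rangle+d_\lambda+a+b$, because $u_{\lambda,-}$ has the lowest weight $-w_0$-conjugate of weight $-\langle\rho,\lambda\rangle$. Pairing nontrivially with $u^*_{\lambda,+}$ requires this weight to equal the highest weight $\langle\rho,\lambda\rangle=d_\lambda/2$. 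This forces $a+b=0$. The same computation applied to $\kappa_{\min}$, which is $\operatorname{ad}$-invariant, shows that $\kappa$ pairs $\frgc_{f,a}$ only with $\frgc_{e,-a}$. Hence $E_\lambda$ maps $\frgc_{f,-d_j+1}$ into itself.

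The second step treats each line separately. Pick a generator $f_j$ of $\frgc_{f,-d_j+1}$ and a generator $x_j$ of the dual line $\frgc_{e,d_j-1}$. The defining relation $\kappa(E_\lambda(f_j),x_j)=\kappa_\lambda(f_j,x_j)$ together with $E_\lambda(f_j)=c_jf_j$ gives
\[
c_j=\frac{\kappa_\lambda(f_j,x_j)}{\kappa_{\min}(f_j,x_j)}.
\]
This ratio is independent of the choices of $f_j$ and $x_j$, which is exactly the expression in the statement. Then $-\epsilon_{\lambda,j}=E_{v_{\lambda,\det}}|_{\mathbb{V}_j}=-c_j$, so $\epsilon_{\lambda,j}=c_j$.

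The final step is rationality. Choose $f_j\in\frgc_{\ZZ}$ and $x_j\in\frgc_{\ZZ}$, which is possible because $e,f,\rho$ are defined over $\QQ$, so $\frgc_f$ and $\frgc_e$ have $\QQ$-forms. Then $\kappa_{\min}(f_j,x_j)\in\QQ^\times$ because $\kappa_{\min}$ is rational on $\frgc_\ZZ$. Likewise $\kappa_\lambda(f_j,x_j)\in\ZZ$ because $V_{\lambda,\ZZ}$ is stable under $\frgc_\ZZ$, and also under $e$, which has integral coefficients. The only subtle point is the sign and identification conventions hidden in $\mathbb{V}\cong\frsc$. These are exactly what Theorem \ref{thm:Espectral} already accounts for, so the corollary is indeed immediate once the grading compatibility is checked.
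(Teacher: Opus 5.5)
Your proposal is correct and matches the paper, which records this as an immediate corollary of Theorem \ref{thm:Espectral}. You make explicit the two checks the paper leaves implicit: that $\kappa_\lambda$ and $\kappa$ pair $\frgc_{f,a}$ only with $\frgc_{e,-a}$, so $E_\lambda$ acts on each line $\frgc_{f,-d_j+1}$ (corresponding to $\mathbb{V}_j$) by the stated scalar; and that this ratio lies in $\QQ$ because $e$, $f$, $u_{\lambda,-}$, $u^*_{\lambda,+}$ and $\kappa_{\min}$ are all defined over $\QQ$.
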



\subsubsection{Spectral description of $b_{\lambda}$}\label{sec:bspectral}
In this subsection, we describe the number $b_{\lambda}\in \QQ$ involved in Theorem \ref{thm:mainsame} (see also Example \ref{eg:Edet}) as an invariant attached to the representation $V_{\lambda}\in \Rep(\Gc)$.

\begin{thm}\label{thm:bspectral}
    We have \[b_{\lambda}=\langle \sum_{s=0}^{d_{\lambda}}e^s H_2 e^{d_{\lambda}-s }u_{\lambda,-},u_{\lambda, +}^*\rangle \in \QQ \] where $H_2=(\sum_{\alpha \in \Phi_G}(d\alpha)^2)/{4h^{\vee}_G}\in U(\frtc)\sub U(\frgc)$, $h^{\vee}_G$ is the dual Coxeter number of $G$.
\end{thm}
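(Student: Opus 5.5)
The plan is to compute $\Gamma^{\loc}_{v_{\lambda,\det}}(1)$ directly, working to first order in $\hbar$, by refining the argument of Theorem \ref{thm:Espectral} so that it keeps the $\hbar$-dependence. By definition,
\[
\Gamma^{\loc}_{v_{\lambda,\det}}(1)=H^*(\frd^{\loc}_\lambda)\bigl(c_1(\cL_{\det})^{d_\lambda+1}\cdot[\Hk^{\loc}_{G,\leq\lambda}]^{BM}\bigr)\in H^2(\mathbb{B}(L^+G\rtimes\Aut(D)))=k\hbar.
\]
Here I use that $H^2(\mathbb{B}G)=0$ because $G$ is semisimple. Equivalently, this is the pairing
\[
\langle c_1(\cL_{\det})^{d_\lambda+1}\cdot[\Hk^{\loc}_{G,\leq\lambda}],\,[\Hk^{\loc}_{G,\leq\lambda}]^{BM}\rangle
\]
inside the $R^W[\hbar]$-module $I\!H^*(\Hk^{\loc}_{G,\leq\lambda})$ and its dual. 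Since the answer is a scalar multiple of $\hbar$, I may enlarge equivariance to $T\times\Gm$ and compute in $I\!H^*(T\bs\Gr_{G,\leq\lambda}/\Gm)$, which is free over $H^*(\mathbb{B}T)[\hbar]$. The $\hbar$-coefficient can then be read off after setting the $\frt$-parameters to zero, because the restriction $R^W[\hbar]\to R[\hbar]$ is injective.

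Next I use the filtration $F_{\geq\mu}I\!H^*(T\bs\Gr_{G,\leq\lambda}/\Gm)$ coming from semi-infinite orbits, as in the footnote to the proof of Theorem \ref{thm:Espectral}. Its graded pieces are $\Gr^F_\mu\cong V_\lambda(\mu)\otimes k[\frt][\hbar]$. With respect to this filtration, I claim that multiplication by $c_1(\cL_{\det})$ decomposes as follows.
\begin{itemize}
\item The off-diagonal part, which raises $\mu$ by a simple coroot, is the principal nilpotent $e$. This holds modulo $\hbar$ by \cite[Theorem\,4]{BF}, via $c_1(\cL_{\det})\mapsto e^{\univ}$.
\item The diagonal part on $\Gr^F_\mu$ is the restriction of $c_1(\cL_{\det})$ to the fixed point $t^\mu$.
\end{itemize}
That restriction is the $T\times\Gm$-weight of $\cL_{\det}$ at $t^\mu$. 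It equals a linear function $\kappa_{\min}(\mu,-)$ in the $\frt$-variables plus $\hbar\cdot\tfrac12\kappa_{\min}(\mu,\mu)$. By the normalization $\cL_{\det}=\cL_{\Ad}^{1/(2h^\vee_G)}$, the Gm-weight of $\cL_{\Ad}$ at $t^\mu$ is $\tfrac12\sum_{\alpha\in\Phi_G}\langle\alpha,\mu\rangle^2$ up to sign. Hence the $\hbar$-part of the diagonal term is exactly $\hbar H_2$ acting on $V_\lambda(\mu)$. After setting the $\frt$-variables to $0$, multiplication by $c_1(\cL_{\det})$ therefore becomes $e+\hbar H_2+O(\hbar^2)$ on $V_\lambda\otimes k[\hbar]$, up to terms that vanish under the pairing. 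The class $[\Hk^{\loc}_{G,\leq\lambda}]$ corresponds to $u_{\lambda,-}$, the lowest filtration piece, and $[\Hk^{\loc}_{G,\leq\lambda}]^{BM}$ corresponds to $u^*_{\lambda,+}$.

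Expanding $(e+\hbar H_2)^{d_\lambda+1}$ gives the following. The $\hbar^0$ term $e^{d_\lambda+1}u_{\lambda,-}$ vanishes. The $\hbar^1$ term is
\[
\hbar\sum_{s=0}^{d_\lambda}\langle e^sH_2e^{d_\lambda-s}u_{\lambda,-},u^*_{\lambda,+}\rangle.
\]
Comparing with $\Gamma^{\loc}_v(1)=-b_v\hbar$ then yields the formula, once the sign is fixed. I will fix the sign exactly as in the footnote to Theorem \ref{thm:Espectral}: the sign of the Gm-weight of $\cL_{\det}$ at $t^\mu$, together with the swap of $\lh$ and $\rh$, must produce the overall $-1$. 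As a sanity check, I will test the minuscule case against \cite{FYZvolume}.

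The main obstacle is the second step. One must justify that, $\hbar$-equivariantly, the action of $c_1(\cL_{\det})$ on the filtration has off-diagonal part exactly $e$ with no $\hbar$-corrections that contribute to the pairing at first order, and diagonal part given by the fixed-point weights. I would first try to prove this by equivariant localization to $T$-fixed points. Alternatively, I would compare with the quantized Bezrukavnikov--Finkelberg description, in which $H^*(\Hk^{\loc}_G)$ is the $\hbar$-deformation $\cO(D_{\frcc}(\frcc\times\frcc))$. In that picture, the $\hbar$-correction of $e^{\univ}$ is a Cartan element. It is forced to be the quadratic Casimir-type element $H_2$ by $W$-invariance and degree considerations, and its normalization is fixed by the case $\lambda$ minuscule.
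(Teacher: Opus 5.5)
Your route is the paper's. You restrict to loop-rotation equivariance and use the semi-infinite filtration $F^{\geq\mu}$ on $I\!H^*(\mathrm{Gr}_{G,\leq\lambda}/\mathbb{G}_m)\cong V_\lambda\otimes k[\hbar]$, whose graded pieces are $V_\lambda(\mu)\otimes k[\hbar]$. You note that $c_1(\mathcal{L}_{\det})$ acts on the $\mu$-th piece by its weight at $t^\mu$, and modulo $\hbar$ by $e$. Then you expand $c_1(\mathcal{L}_{\det})^{d_\lambda+1}$ to first order.

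The extra $T$-equivariance is harmless but unnecessary. The relevant fact is that $H^2(\mathbb{B}(L^+G\rtimes\mathrm{Aut}(D)))=k\hbar$ maps isomorphically to $H^2(\mathbb{B}\mathbb{G}_m)$; injectivity of $R^W[\hbar]\to R[\hbar]$ plays no role.

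The step you call the main obstacle needs neither localization nor the quantized Bezrukavnikov--Finkelberg picture. The ``forced by $W$-invariance, normalized by the minuscule case'' heuristic would not prove it anyway. It is a grading argument:
\begin{itemize}
\item Write $c_1(\mathcal{L}_{\det})=A_0+\hbar A_1+\hbar^2A_2+\cdots$, where $A_j\in\mathrm{End}(V_\lambda)$ shifts $\langle 2\rho,\mu\rangle$ by $2-2j$.
\item Preserving the filtration means $A_j$ sends $V_\lambda(\mu)$ into weights $\mu'\geq\mu$.
\item For such $\mu'$, $\langle 2\rho,\mu'-\mu\rangle\geq 0$, with equality only if $\mu'=\mu$.
\item Hence $A_j=0$ for $j\geq 2$, and $A_1$ preserves each weight space, where it equals the graded-piece action.
\end{itemize}
So $c_1(\mathcal{L}_{\det})=e+\hbar A_1$ exactly.

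The genuine gap is the sign, which is part of the statement. You cannot fix it by requiring that it ``must produce the overall $-1$'': that assumes the conclusion. Also, the footnote to Theorem \ref{thm:Espectral} concerns the sign of the image of $[x]$, not the weight of $\mathcal{L}_{\det}$. Taken literally, your $A_1=+H_2$ together with $\Gamma^{\mathrm{loc}}_v(1)=-b_v\hbar$ gives $b_\lambda=-\sum_{s}\langle e^sH_2e^{d_\lambda-s}u_{\lambda,-},u^*_{\lambda,+}\rangle$. That is the opposite of the claim.

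The missing input is the actual loop-rotation weight of $\mathcal{L}_{\det}$ at $t^\mu$. The paper records it as $-\kappa_{\min}(\mu,\mu)/2$, so that $A_1=-H_2$. You should compute it from the explicit fiber $(\mathcal{E}_2:\mathcal{E}_1)$ of $\mathcal{L}_{\mathrm{Std}}$, pulled back along the adjoint representation:
\begin{itemize}
\item The $\mathbb{G}_m$-weights on the two lattice quotients at $t^\mu$ give a definite multiple of $\tfrac12\sum_{\alpha\in\Phi_G}\langle\alpha,\mu\rangle^2$.
\item There is also a linear term, which cancels because $\Phi_G=-\Phi_G$.
\item The sign must be tracked against the normalization $\hbar=c_1(T_D)$ and the left/right swap used in the proof.
\end{itemize}
Once $A_1=-H_2$ is established, use $e^{d_\lambda+1}u_{\lambda,-}=0$. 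Then the $\hbar$-coefficient of $-\langle(e-\hbar H_2)^{d_\lambda+1}u_{\lambda,-},u^*_{\lambda,+}\rangle$ is exactly the stated expression.
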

\begin{proof}
    As we did in the proof of Theorem \ref{thm:Espectral}, we exchange the role of $\lh$ and $\rh$ everywhere.
    
    Note that we can work with $H^*(\Gr_G/\Gm)$ instead of $H^*(\Hk_G^{\loc})$ where $\Gm$ acts on $\Gr_G$ via loop rotation. By the proof of \cite[Lemma\,2.2]{yun2011integral}, there is a canonical isomorphism $I\!H^*(\Gr_{G,\leq\lambda}/\Gm)\cong V_{\lambda}\otimes k[\hbar]$. Moreover, let $T_{\mu}=LN^-t^{\mu}L^+G/L^+G\sub \Gr_{G}$ and $\overline{T_{\mu}}$ be its closure for $\mu\in X_*(T)$. Define \[F^{\geq \mu}I\!H^*(\Gr_{G,\leq\lambda}/\Gm)=\im(I\!H^*_{(\overline{T_{\mu}}\cap \Gr_{G,\leq\lambda})/\Gm}(\Gr_{G,\leq\lambda}/\Gm)\to I\!H^*(\Gr_{G,\leq\lambda}/\Gm)).\] This gives a filtration $\{F^{\geq \mu}I\!H^*(\Gr_{G,\leq\lambda}/\Gm)\sub I\!H^*(\Gr_{G,\leq\lambda}/\Gm)\}_{\mu \in X_*(T)}$. On the spectral side, this filtration corresponds to the obvious filtration by weights $\{(G^{\geq \mu}V_{\lambda})\otimes k[\hbar]\}_{\mu \in X^*(\Tc)}$. Passing to associated graded pieces, this isomorphism gives $\mathrm{Gr}^{\mu}F(I\!H^*(\Gr_{G,\leq\lambda}/\Gm))\cong i_{\mu}^*j_{\mu}^!\IC_{\Gr_{G,\leq\lambda}/\Gm}\otimes H^*(\mathbb{B}\Gm)\cong V_{\lambda}(\mu)\otimes k[\hbar]$. Here $i_{\mu}:\{t_{\mu}\}/\Gm\to T_{\mu}/\Gm$ and $j_{\mu}:T_{\mu}/\Gm\to \Gr_{G,\leq\lambda}/\Gm$ are the natural inclusions. 

    Note that taking cup product with $c_1(\cL_{\det})$ preserves the filtration $F^{\geq \mu}I\!H^*((L^+G\bs LG_{\leq\lambda})/\Gm)$. Therefore, we get an induced endomorphism $\cup c_1(\cL_{\det})$ on the associated graded pieces $\mathrm{Gr}^{\mu}F(I\!H^*(\Gr_{G,\leq\lambda}/\Gm))$, which coincides with $\cup c_1(\cL_{\det}|_{[\{t^{\mu}\}/\Gm]})$ on $i_{\mu}^*j_{\mu}^!\IC_{\Gr_{G,\leq\lambda}/\Gm}\otimes H^*(\mathbb{B}\Gm)$. Note that $[\cL_{\det}|_{[\{t^{\mu}\}/\Gm]}] \in \Pic(\mathbb{B}\Gm)_{\QQ}\cong \QQ$ can be identified with $-(\sum_{\alpha \in \Phi_{G}}\langle\alpha,\mu\rangle^2)/4h^{\vee}_G =-\kappa_{\min}(\mu,\mu)/2$.

    On the other hand, we know that $\cup c_1(\cL_{\det})$ modulo $\hbar$ can be identified with the action of $e\in \frgc$ on $V_{\lambda}$. Unwinding the construction, we know $b_{\lambda}$ is the coefficient of $\hbar$ in $-\langle c_1(\cL_{\det})^{d_{\lambda}+1}\cdot u_{\lambda,-},u_{\lambda,+}^*\rangle \in k[\hbar]$. The desired identity follows immediately.
\end{proof}

\begin{remark}\label{rmk:BZSV}
Here is another description of $E_{\lambda}$ and $b_{\lambda}$, which reveals their relation to relative Langlands duality as developed in \cite{BZSV}. We follow the notation of \cite[\S4]{liu2025higherperiodintegralsderivatives}. Let $\Mc=T^*_{\psi}\Nc \cong \Gc\times\frsc$ be the $\Gc$-Hamiltonian space corresponding to the Whittaker period. It is the relative Langlands dual of the trivial $G$-Hamiltonian space $M=T^*X$ for $X=*$. The elements $\frc^{\loc}_{\lambda,\det}$ and $\frd^{\loc}_{\lambda}$ give rise to maps
$\alpha:V_{\lambda}\to \PL_{X,\hbar}$ and $\beta:V_{\lambda}^*\to \PL_{X,\hbar}$.
Reducing modulo $\hbar$ and using the isomorphism $\PL_{X,\hbar}\otimes_{k[\hbar]}k \cong \cO(\Mc)$, we obtain maps
$\overline{\alpha}:V_{\lambda}\to \cO(\Mc)$ and $\overline{\beta}:V_{\lambda}^*\to \cO(\Mc)$. Consider the canonical element $\mathrm{unit}\in V_{\lambda}\otimes V_{\lambda}^*$. Then
$m\circ (\alpha\otimes\beta)(\mathrm{unit}) = b_{\lambda}\cdot \hbar \in \PL_{X,\hbar}$,
where $m:\PL_{X,\hbar}\otimes \PL_{X,\hbar} \to \PL_{X,\hbar}$ is the multiplication map. Define a vector field
$X_{\lambda}:=\overline{m}\circ (d\otimes \id)\circ (\overline{\alpha}\otimes\overline{\beta})(\mathrm{unit}) \in \Gamma(\Mc,T^*\Mc)\cong \Gamma(\Mc,T\Mc)$,
where $d:\cO(\Mc)\to \Gamma(\Mc,T^*\Mc)$ is the de Rham differential and $\overline{m}:\cO(\Mc)\otimes \Gamma(\Mc,T^*\Mc)\to \Gamma(\Mc,T^*\Mc)$ is the multiplication map.
The vector field $X_{\lambda}$ vanishes at $(\id,e)\in \Gc\times\frsc\cong \Mc$, and hence induces a Hessian endomorphism
$H_{X_{\lambda},(\id,e)}:T_{(\id,e)}\Mc\to T_{(\id,e)}\Mc$.
Under the identification $T_{(\id,e)}\Mc\cong \frgc\times \frsc$, one has
$H_{X_{\lambda},(\id,e)}=0\times E_{\lambda}\in \End(\frgc\times \frsc)$.
This description can be further simplified by replacing $\Mc$ with its Whittaker reduction
$T^*((\Nc,\psi)\bs \Gc/(\Nc,\psi))\cong J_{\Gc}$.
It turns out that such Hessians controlling arithmetic intersection numbers of special cycles constitute a general phenomenon for hyperspecial varieties. We plan to investigate this direction in future work.
\end{remark}

\section{Examples}\label{sec:examples}
In this section, we work out the constants $\epsilon_{\lambda,i},b_{\lambda}$ appearing in Theorem \ref{thm:mainsame} explicitly in some cases.

\subsection{Examples of \texorpdfstring{$\epsilon_{\lambda,j}$}{epsilon}}\label{sec:Eexamples}
In this section, we give examples of the number $\epsilon_{\lambda,j}\in \QQ$ for $\lambda\in X^*(\Tc)_+$ and $1\leq j\leq n$, where $n$ is the rank of $G$.

\subsubsection{Reduction to fundamental weights}
The computation of the numbers $\epsilon_{\lambda,j}$ can be reduced to the computation of invariants of fundamental weights $\varpi_i \in X^*(\Tc)_+$. In fact, the computation for $\lambda_1 +\lambda_2 \in X^*(\Tc)_+$ can be reduced to $\lambda_1,\lambda_2\in X^*(\Tc)_+$. This is provided by Proposition \ref{prop:redtofundepsilon}.

Consider $S_{\lambda}\in \frgc_e^*$ defined such that for $Y\in \frgc_{e,d_j-1}$, one has \begin{equation}
    S_{\lambda}(Y)=\langle e^{d_{\lambda}-d_j+1}Y\cdot u_{\lambda,-},u_{\lambda,+}^*\rangle.
\end{equation}
Similarly, one define $T_{\lambda}\in \frgc_{f}^*$ such that for $X\in \frgc_{f,-d_j+1}$, one has \begin{equation}
    T_{\lambda}(X)=\langle\sum_{s=0}^{d_{\lambda}+d_j-1}e^sXe^{d_{\lambda}+d_j-1-s}\cdot u_{\lambda,-},u_{\lambda,+}^*\rangle.
\end{equation}
Recall that we introduced the notation $d_{\lambda} = \langle2\rho,\lambda\rangle\in \ZZ_{\geq 0}$. Define $\deg_{\lambda}:=\langle e^{d_{\lambda}} u_{\lambda,-} ,u_{\lambda,+}^*\rangle\in \ZZ_{\geq 0}$. We write $\kappa_{\lambda,j}:=\kappa_{\lambda}|_{\frgc_{f,-d_j+1}\otimes \frgc_{e,d_j-1}}$.

\begin{prop}\label{prop:redtofundepsilon}
    For $\lambda_1,\lambda_2\in X^*(\Tc)_+$, we have
    \begin{equation}\begin{split}\kappa_{\lambda_1+\lambda_2,j}=\binom{d_{\lambda_1}+d_{\lambda_2}+1}{d_{\lambda_1}+1}\kappa_{\lambda_1,j}\deg_{\lambda_2}+\binom{d_{\lambda_1}+d_{\lambda_2}+1}{d_{\lambda_2}+1}\kappa_{\lambda_2,j}\deg_{\lambda_1}
    \\
    +\binom{d_{\lambda_1}+d_{\lambda_2}+1}{d_{\lambda_1}-d_j+1}T_{\lambda_2,j}\otimes S_{\lambda_1,j}+\binom{d_{\lambda_1}+d_{\lambda_2}+1}{d_{\lambda_2}-d_j+1}T_{\lambda_1,j}\otimes S_{\lambda_2,j}
    \end{split}
    \end{equation}
    \begin{equation}
        S_{\lambda_1+\lambda_2,j}=\binom{d_{\lambda_1}+d_{\lambda_2}-d_j+1}{d_{\lambda_1}-d_j+1}S_{\lambda_1,j}\deg_{\lambda_2}+\binom{d_{\lambda_1}+d_{\lambda_2}-d_j+1}{d_{\lambda_2}-d_j+1}S_{\lambda_2,j}\deg_{\lambda_1}
    \end{equation}
    \begin{equation}
        T_{\lambda_1+\lambda_2,j}=\binom{d_{\lambda_1}+d_{\lambda_2}+d_j}{d_{\lambda_1}+d_j}T_{\lambda_1,j}\deg_{\lambda_2}+\binom{d_{\lambda_1}+d_{\lambda_2}+d_j}{d_{\lambda_2}+d_j}T_{\lambda_2,j}\deg_{\lambda_1}
    \end{equation}
    \begin{equation}
        \deg_{\lambda_1+\lambda_2}=\binom{d_{\lambda_1}+d_{\lambda_2}}{d_{\lambda_1}}\deg_{\lambda_1}\deg_{\lambda_2}.
    \end{equation}
\end{prop}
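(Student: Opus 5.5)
We realize $V_{\lambda_1+\lambda_2}$ inside the tensor product $V_{\lambda_1}\otimes V_{\lambda_2}$ as the Cartan component generated by $u_{\lambda_1,-}\otimes u_{\lambda_2,-}$. The lowest weight vector $u_{\lambda_1+\lambda_2,-}$ is identified with $u_{\lambda_1,-}\otimes u_{\lambda_2,-}$, and $u^*_{\lambda_1+\lambda_2,+}$ with the restriction of $u^*_{\lambda_1,+}\otimes u^*_{\lambda_2,+}$. These identifications hold over $\ZZ$ up to sign, since the highest weight line of the Cartan component is spanned by $u_{\lambda_1,+}\otimes u_{\lambda_2,+}$. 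The sign normalization $\langle e^{d_\lambda}u_{\lambda,-},u^*_{\lambda,+}\rangle\ge 0$ is compatible with this choice, as the computation of $\deg$ below shows. Because the Cartan component is a $\frgc$-submodule, every pairing of the form $\langle P(e,X,Y)\,u_{\lambda_1+\lambda_2,-},u^*_{\lambda_1+\lambda_2,+}\rangle$ can be computed in the tensor product. There, elements of $\frgc$ act by the coproduct $Z\mapsto Z\otimes 1+1\otimes Z$. We also use $d_{\lambda_1+\lambda_2}=d_{\lambda_1}+d_{\lambda_2}$.

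The basic input is weight bookkeeping. In $V_{\lambda_i}$, the pairing $\langle w\,u_{\lambda_i,-},u^*_{\lambda_i,+}\rangle$ vanishes unless the word $w$ raises the $\rho$-grading by exactly $d_{\lambda_i}$. Here $e$ has degree $1$, $Y\in\frgc_{e,d_j-1}$ has degree $d_j-1$, and $X\in\frgc_{f,-d_j+1}$ has degree $-d_j+1$.

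\textbf{Computing $\deg$.} Expanding $(e\otimes1+1\otimes e)^{d_{\lambda_1}+d_{\lambda_2}}$ binomially, only the term $e^{d_{\lambda_1}}\otimes e^{d_{\lambda_2}}$ survives, which gives the formula for $\deg_{\lambda_1+\lambda_2}$.

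\textbf{Computing $S$.} In $e^{N}Y$ with $N=d_{\lambda_1}+d_{\lambda_2}-d_j+1$, the element $Y$ acts on one tensor factor. If $Y$ acts on the first factor, that factor must absorb $d_{\lambda_1}-d_j+1$ copies of $e$ and the second factor absorbs $d_{\lambda_2}$. The $e$'s coming after $Y$ (to its left in the word) can be distributed freely. Because $e\otimes 1$ and $1\otimes e$ commute, the count is $\binom{N}{d_{\lambda_1}-d_j+1}$. This yields $S_{\lambda_1,j}\deg_{\lambda_2}$; the symmetric term gives the other summand.

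\textbf{Computing $T$.} We use the sum $\sum_s e^sXe^{M-s}$ with $M=d_{\lambda_1}+d_{\lambda_2}+d_j-1$. The key observation is that this symmetrized sum is the coefficient extraction of $\exp$-type generating functions. Concretely, $\sum_{s}e^sXe^{M-s}$ is the degree-$M$ part of $(1-te)^{-1}X(1-te)^{-1}$. When $X$ acts on factor $1$, the factor-$2$ $e$'s can be placed anywhere among the $M+1$ slots. A stars-and-bars count then gives $\binom{M+1}{d_{\lambda_2}}=\binom{d_{\lambda_1}+d_{\lambda_2}+d_j}{d_{\lambda_1}+d_j}$ times the inner symmetrized sum on factor $1$, which is $T_{\lambda_1,j}$.

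\textbf{Computing $\kappa$.} For $\kappa_{\lambda_1+\lambda_2,j}$, the product $XY$ becomes $(X_1+X_2)(Y_1+Y_2)$, where the subscript indicates which tensor factor the element acts on. The diagonal terms $X_1Y_1$ and $X_2Y_2$ give the $\kappa\cdot\deg$ contributions, with binomials counted as for $T$ using $d_{\lambda_1}+d_{\lambda_2}+1$ slots.

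The cross terms $X_1Y_2$ and $X_2Y_1$ are the main obstacle. Take $X_2Y_1$. Factor $2$ sees only $X$ among the $e$'s and becomes a $T$-type sum; factor $1$ sees only $Y$ and becomes an $S$-type word. The difficulty is that the $e$'s of factor $1$ are constrained to lie on one side of $Y$ while factor $2$ is symmetrized around $X$. One must check that interleaving the two factors' $e$'s, together with the outer symmetrization in $s$, produces exactly $\binom{d_{\lambda_1}+d_{\lambda_2}+1}{d_{\lambda_1}-d_j+1}T_{\lambda_2,j}S_{\lambda_1,j}$. The relative order of $X$ and $Y$ matters less, since they act on different factors and so commute.

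To handle the cross terms, I would encode each word as a lattice path and use the generating identity that a sum over interleavings of two symmetrized sequences factorizes. Failing that, I would verify the count by induction on $d_{\lambda_2}$, using the Pascal recursion. Finally, I would double-check the signs and normalizations against the minuscule $\mathfrak{sl}_2$ case.
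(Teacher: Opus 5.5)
Your strategy of computing everything inside $V_{\lambda_1}\otimes V_{\lambda_2}$ is the right one. The paper states the proposition without a written proof, but its type $C$ and $D$ set-ups, e.g.\ $u_{\varpi_n,-}^{\otimes 2}=u_{2\varpi_n,-}$, presuppose exactly this. Your treatment of the normalizations, of $\deg$, $S$, $T$, and of the diagonal terms of $\kappa$ is correct.

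The gap is in the cross terms of $\kappa_{\lambda_1+\lambda_2,j}$, which make up half of the asserted formula. You only say ``one must check'' and offer lattice paths or an induction. The missing observation is that $Y\in\frgc_{e,d_j-1}$ commutes with $e$. Write $D=d_{\lambda_1}+d_{\lambda_2}$, $e'=e\otimes 1$, $e''=1\otimes e$, $X''=1\otimes X$, $Y'=Y\otimes 1$. Then $Y'$ commutes with $e'$, $e''$ and $X''$, so
\[
\sum_{s=0}^{D}(e'+e'')^sX''Y'(e'+e'')^{D-s}=\Bigl(\sum_{s=0}^{D}(e'+e'')^sX''(e'+e'')^{D-s}\Bigr)Y'.
\]
Consider a monomial with $a$ copies of $e''$ among the first $s$ factors and $b$ among the last $D-s$. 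It pairs to $\langle e^aXe^bu_{\lambda_2,-},u^*_{\lambda_2,+}\rangle\langle e^{D-a-b}Yu_{\lambda_1,-},u^*_{\lambda_1,+}\rangle$, which forces $a+b=d_{\lambda_2}+d_j-1$. Its multiplicity is $\sum_{s}\binom{s}{a}\binom{D-s}{b}=\binom{D+1}{a+b+1}=\binom{D+1}{d_{\lambda_1}-d_j+1}$, independent of $(a,b)$. Summing over $a+b$ gives $\binom{D+1}{d_{\lambda_1}-d_j+1}T_{\lambda_2,j}(X)S_{\lambda_1,j}(Y)$, and the $X'Y''$ term is symmetric. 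This is the same Chu--Vandermonde count you already used for $T$.

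Your description of the obstacle is backwards. In $e^sXYe^{D-s}$ the factor-one copies of $e$ sit on both sides of $Y'$, not on one side. It is precisely $[e,Y]=0$ that collapses $e^{\alpha}Ye^{\beta}$ to $e^{\alpha+\beta}Y$, i.e.\ to an $S$-value. Without it, the cross term would be
\[
\sum\binom{\alpha+a}{a}\binom{\beta+b}{b}\langle e^aXe^bu_{\lambda_2,-},u^*_{\lambda_2,+}\rangle\langle e^{\alpha}Ye^{\beta}u_{\lambda_1,-},u^*_{\lambda_1,+}\rangle,
\]
which does not factor as $T\cdot S$. So this commutation is the actual content of the cross-term identity, not the relative order of $X$ and $Y$ that you mention. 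Once you use it, no lattice-path factorization or induction on $d_{\lambda_2}$ is needed.
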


This proposition reduces the computation of $\kappa_{\lambda}$ for an arbitrary $\lambda\in X^*(\Tc)$ to the computation of $\kappa_{\varpi_i},S_{\varpi_i},T_{\varpi_i},\deg_{\varpi_i}$ where $\varpi_i\in X^*(\Tc)$ are the fundamental weights of $\Gc$ for $1\leq i \leq n$.

\subsubsection{Case \texorpdfstring{$j = 1$}{j=1}}
In this subsection, we derive a formula of $\epsilon_{\lambda,1}$ for any $\lambda\in X^*(\Tc)_+$. We treat this independently since the answer in this case is particularly simple.
\begin{prop}
    For any $\lambda\in X^*(\Tc)_+$, we have \begin{equation}\label{eq:epsilon1formula}
    \epsilon_{\lambda,1}=\frac{2h^{\vee}_{\Gc}l_Gd_{\lambda}(d_{\lambda} +1)(d_{\lambda}+2)\deg_{\lambda}}{\sum_{j=1}^n(2d_j-2)(2d_j-1)2d_j}.\end{equation}
\end{prop}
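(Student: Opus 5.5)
The plan is to apply Corollary~\ref{cor:epsilongenformula} with $j=1$ and compute both the numerator and the denominator there using only the principal $\mathfrak{sl}_2$-triple $(e,2\rho,f)$. Since $d_1=2$, the relevant graded pieces are $\frgc_{f,-1}$ and $\frgc_{e,1}$. The principal $\mathfrak{sl}_2$ sits inside $\frgc$, and its degree-$\pm1$ parts are spanned by $f$ and $e$. I will check that $\frgc_{f,-1}=k\cdot f$ and $\frgc_{e,1}=k\cdot e$: this holds because $\frgc$ decomposes under the principal $\mathfrak{sl}_2$ into irreducibles of dimensions $2d_j-1$, and only the adjoint copy ($d_1=2$) contributes lowest and highest vectors of $[\rho,-]$-weight $\mp1$. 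Consequently $\epsilon_{\lambda,1}=\kappa_\lambda(f,e)/\kappa_{\min}(f,e)$, and the result does not depend on how the generators are normalized. The type $D_n$, $n$ even, subtlety does not arise for $j=1$, since the degree-$2$ piece is one-dimensional in every type.

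For the numerator, write
\[
\kappa_\lambda(f,e)=\Big\langle \sum_{s=0}^{d_\lambda} e^s f e^{d_\lambda+1-s}\cdot u_{\lambda,-},\,u_{\lambda,+}^*\Big\rangle .
\]
The vector $u_{\lambda,-}$ is annihilated by $f$ and has $2\rho$-eigenvalue $-d_\lambda$, so it generates an irreducible $\mathfrak{sl}_2$-module with basis $v_k=e^k u_{\lambda,-}$ for $0\le k\le d_\lambda$, and $v_{d_\lambda+1}=0$. Induction using $[e,f]=2\rho$ gives $f v_k=k(d_\lambda-k+1)\,v_{k-1}$. Hence the $s$-th summand is $s(d_\lambda+1-s)\,v_{d_\lambda}$ (it is zero for $s=0$), and pairing with $u_{\lambda,+}^*$ yields
\[
\kappa_\lambda(f,e)=\deg_\lambda\sum_{s=1}^{d_\lambda}s(d_\lambda+1-s)=\deg_\lambda\,\frac{d_\lambda(d_\lambda+1)(d_\lambda+2)}{6}.
\]

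For the denominator, use \eqref{eq:kappamin}, which gives $\kappa_{\min}(f,e)=\kappa_{\Ad}(f,e)/(2h^\vee_{\Gc}l_G)$, with $\kappa_{\Ad}(f,e)=\tr(\mathrm{ad}f\,\mathrm{ad}e)$. I will decompose $\frgc$ under the principal $\mathfrak{sl}_2$ into irreducibles $W_j$ of dimension $m_j=2d_j-1$ (Kostant). On $W_j$ the operator $fe$ acts on the basis vector $v_k$ by the scalar $(k+1)(m_j-1-k)$, using the same formula as above. Summing over $k$ gives $\tr(fe|_{W_j})=(m_j-1)m_j(m_j+1)/6=(2d_j-2)(2d_j-1)2d_j/6$. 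Summing over $j$ and dividing the two quantities, the factors of $6$ cancel and we obtain \eqref{eq:epsilon1formula}.

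The main obstacle is the bookkeeping around normalizations. I need to confirm that the $e$ in the definition of $\kappa_\lambda$, with its rescaled Chevalley generators, is the same $e$ used to build the triple, so that $\frsc$ and $f$ are compatible. I also need to check that the sign conventions relating $E_\lambda$ to $\epsilon_{\lambda,1}$ (through Theorem~\ref{thm:Espectral} and the convention $E_v|_{\mathbb V_i}=-\epsilon_{v,i}$) are exactly those absorbed in Corollary~\ref{cor:epsilongenformula}. Since that corollary is taken as given, the remaining work is the two $\mathfrak{sl}_2$ trace computations above.
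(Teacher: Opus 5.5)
Your proposal is correct and follows essentially the same route as the paper. Both arguments reduce, via Corollary~\ref{cor:epsilongenformula}, to the ratio $\kappa_\lambda(f,e)/\kappa_{\min}(f,e)$ on the one-dimensional pieces spanned by $f$ and $e$, and evaluate numerator and denominator through the principal $\mathfrak{sl}_2$, using the decomposition of the adjoint representation into irreducibles of dimension $2d_j-1$. The only difference is that you compute the explicit values $\deg_\lambda\, d_\lambda(d_\lambda+1)(d_\lambda+2)/6$ and $(2d_j-2)(2d_j-1)2d_j/6$, whereas the paper only quotes the ratio $b(b+1)(b+2)/a(a+1)(a+2)$ of $\mathfrak{sl}_2$ trace forms.
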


\begin{proof}

 Note that $\frgc_{f,-1}=\Span(f)$ and $\frgc_{e, 1}=\Span(e)$. We get \[\epsilon_{\lambda,1}=\kappa_{\lambda}(f, e)/\kappa_{\min}(f,e).\] Consider the Lie subalgebra $\mathfrak{sl}_2=\Span(e,f,2\rho)\sub  \frgc$. We have \[\Ad|_{\mathfrak{sl}_2} = \bigoplus_{j=1}^n\Sym^{2d_i-2}\Std_2.\] Note that \[\kappa_{\lambda}(f,e)=\kappa_{\SL_2,\Sym^{d_{\lambda}}\Std_2}(f,e)\cdot \deg_{\lambda}\] where $\kappa_{\SL_2,\Sym^{d_{\lambda}}\Std_2}$ is the invariant bilinear form for $\SL_2$ defined in \eqref{eq:kappaV} and \[\kappa_{\min}(f,e)=\frac{1}{2h^{\vee}_{\Gc}l_G}\kappa_{\Ad}(f,e)=\frac{1}{2h^{\vee}_{\Gc}l_G}\sum_{j=1}^n\kappa_{\SL_2,\Sym^{2d_j -2}\Std_2}(f,e).\] For any pair of integers $a,b\in \ZZ_{\geq 1}$, an easy computation shows \[\frac{\kappa_{\SL_2,\Sym^b\Std_2}}{\kappa_{\SL_2,\Sym^a\Std_2}}=\frac{b(b+1)(b+2)}{a(a+1)(a+2)}.\] This implies that\[\epsilon_{\lambda,1}=\frac{\kappa_{\lambda}(f, e)\deg_{\lambda}}{\kappa_{\min}(f,e)}=\frac{2h^{\vee}_{\Gc}l_Gd_{\lambda}(d_{\lambda} +1)(d_{\lambda}+2)\deg_{\lambda}}{\sum_{j=1}^n(2d_i-2)(2d_i-1)2d_i}.\]

 \end{proof}

\subsubsection{Cyclic case}
In this subsection, we illustrate a special situation where all the invariants $\epsilon_{\lambda,j}$ for different $1\leq j\leq n$ are the same. Recall we have the representation map $r_{V_{\lambda}}:\Gc\to \GL(V_{\lambda})$ whose differential is a Lie algebra homomorphism $dr_{V_{\lambda}}: \frgc\to \gl(V_{\lambda})$.

\begin{prop}\label{prop:regularcase}
    Suppose the element $dr_{V_{\lambda}}(e)\in \gl(V_{\lambda})$ is a regular nilpotent element, then we have $\epsilon_{\lambda,j}=\epsilon_{\lambda,1}$ for any $1\leq j\leq n$.
\end{prop}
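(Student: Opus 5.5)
The plan is to show that, when $dr_{V_\lambda}(e)$ is regular nilpotent, the bilinear form $\kappa_\lambda$ on $\frgc_f\times\frgc_e$ is a fixed scalar multiple of $\kappa_{\min}$. Concretely, I will show $\kappa_\lambda=\deg_\lambda\cdot\kappa_{V_\lambda}|_{\frgc_f\times\frgc_e}$. Since $\Gc$ is almost simple, $\kappa_{V_\lambda}=c_\lambda\kappa_{\min}$ for a constant $c_\lambda$, as every invariant form is proportional to $\kappa_{\min}$. Then $E_\lambda=\deg_\lambda c_\lambda\cdot\id$, and Theorem \ref{thm:Espectral} (or Corollary \ref{cor:epsilongenformula} outside type $D_{n}$, $n$ even) gives that all $\epsilon_{\lambda,j}$ coincide, hence equal $\epsilon_{\lambda,1}$. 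In the $D_n$, $n$ even, case the $2\times 2$ block is then also scalar, so its eigenvalues coincide as well.

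First I fix a basis. Write $d=d_\lambda$. Since $dr(e)$ is a single Jordan block on $V_\lambda$, the vectors $v_k:=e^k u_{\lambda,-}$ for $0\le k\le d$ form a basis, and $\dim V_\lambda=d+1$. Moreover $e^{d+1}=0$ on $V_\lambda$, and $u_{\lambda,+}^*$ pairs trivially with $v_k$ for $k<d$ and pairs to $\deg_\lambda$ with $v_d$. The centralizer of a regular nilpotent in $\gl(V_\lambda)$ consists of polynomials in it. Hence for $Y\in\frgc_{e,m}$ with $m=d_j-1$, the operator $dr(Y)$ commutes with $dr(e)$ and has $\rho$-degree $m$, which forces $dr(Y)=c_Y\, e^m$ for a scalar $c_Y$.

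Next I compute directly. The vector $Y e^{d-s}u_{\lambda,-}=c_Y v_{d+m-s}$ vanishes unless $s\ge m$. For $s\ge m$, the pairing $\langle e^s X v_{d+m-s},u_{\lambda,+}^*\rangle$ equals $\deg_\lambda$ times the $v_{d-s}$-coefficient of $Xv_{d+m-s}$, namely $\deg_\lambda X_{d-s,\,d+m-s}$. Summing over $m\le s\le d$ and reindexing with $t=d-s$ gives
\[
\kappa_\lambda(X,Y)=\deg_\lambda\, c_Y\sum_{t=0}^{d-m}X_{t,t+m}=\deg_\lambda\, c_Y\,\tr(X e^m)=\deg_\lambda\,\tr\bigl(dr(X)dr(Y)\bigr)=\deg_\lambda\,\kappa_{V_\lambda}(X,Y).
\]

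The only delicate step is the centralizer argument together with the index bookkeeping in this trace identity. Everything else is formal, given Theorem \ref{thm:Espectral} and the proportionality of invariant forms.
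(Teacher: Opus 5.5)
Your proposal is correct and matches the paper's proof. Both arguments use the cyclic basis $e^k u_{\lambda,-}$ to show $\kappa_\lambda=\deg_\lambda\,\kappa_{V_\lambda}$ on $\frgc_f\times\frgc_e$, and then conclude from the proportionality of $\kappa_{V_\lambda}$ with $\kappa_{\min}$.

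The one difference is your centralizer step $dr(Y)=c_Y e^m$. It is valid but not needed: $\langle e^s w,u_{\lambda,+}^*\rangle$ equals $\deg_\lambda$ times the $e^{d-s}u_{\lambda,-}$-coefficient of $w$. Hence $\sum_{s=0}^{d}\langle e^s A\, e^{d-s}u_{\lambda,-},u_{\lambda,+}^*\rangle=\deg_\lambda\,\mathrm{tr}(A)$ for any operator $A$, in particular for $A=XY$, which is what the paper uses implicitly.
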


\begin{proof}
    Under the assumption, we have $V_{\lambda}=\bigoplus_{s=0}^{d_{\lambda}}k\cdot e^{s}v_{\lambda,-}$. This implies that for any $X\in \frgc_{e,d_j-1}$ and $Y\in \frgc_{f,-d_j+1}$, we have \[\kappa_{\lambda}(X,Y)=\tr((dr_{V_{\lambda}})(X)\cdot (dr_{V_{\lambda}})(Y))\deg_{\lambda}=\kappa_{V_{\lambda}}(X,Y)\deg_{\lambda}\] where $\kappa_{V_{\lambda}}$ is defined in \eqref{eq:kappaV}. This shows that $\kappa_{\lambda}=\frac{\kappa_{V_{\lambda}}\deg_{\lambda}}{\kappa_{\min}}\cdot \kappa_{\min}$, which implies $\e_{\lambda,j}=\frac{\kappa_{V_{\lambda}}\deg_{\lambda}}{\kappa_{\min}}$ for any $1\leq j\leq n$.
\end{proof}

The assumption of Proposition \ref{prop:regularcase} is satisfied only in the following cases:
    \[
\begin{array}{c|c|c}
\Gc & \textup{Name of $\lambda$} & \epsilon_{\lambda,1} \\
\hline
A_{n-1} & \textup{standard} & 1 \\
C_{n} & \textup{standard} & 4 \\
G_{2} & \textup{quasi-minuscule} & 108
\end{array}
.\] 

For classical groups, one can write down explicit formulas for all the invariants involved in Proposition \ref{prop:redtofundepsilon}. Compared to the formulas in \cite{feng2026eigenweightsarithmetichirzebruchproportionality}, these formulas are more elementary.

\subsubsection{Type \texorpdfstring{$A_{n}$}{A_n}}\label{sec:Aepsilon}

In this subsection, we determine all the invariants in Proposition \ref{prop:redtofundepsilon} when $G=\PGL_{n+1}$. In this case, we have $d_j = j + 1$ for $1\leq j\leq n$. We have $\Gc=\SL_{n+1}$. We order the fundamental weights as \[\dynkin[labels={\varpi_1,\varpi_2,\varpi_{n-1},\varpi_n}, edge length=.5cm]{A}{}.\] We have $V_{\varpi_k,\ZZ}=\bigwedge^k_{\ZZ}\Std_{n+1,\ZZ}\in \Rep(\SL_{n+1,\ZZ})$ where $\Std_{n+1,\ZZ}=\bigoplus_{i=1}^{n+1} \ZZ\cdot u_i$ is the standard representation. We choose the usual maximal torus of $\SL_{n+1,\ZZ}$ with respect to this basis. The invariant bilinear form is given by $\kappa_{\min}=\kappa_{\Std_{n+1}}: \mathfrak{sl}_{n+1}\times \mathfrak{sl}_{n+1}\to k$.

Take $e\in \mathfrak{sl}_{n+1,\ZZ}$ such that \[e\cdot u_i=
\begin{cases}
u_{i+1}, & 1 \le i \le n ,\\[2pt]
0,        & i = n+1 .
\end{cases}
.\] We can choose lowest weight (co)vectors of $V_{\varpi_k,\ZZ}$ for $1\leq k\leq n$ as \[u_{\varpi_k,-}=u_{n-k+2}\wedge\cdots\wedge u_{n+1}\in V_{\varpi_k,\ZZ}\] and \[u_{\varpi_k,+}^*=(u_1\wedge\cdots\wedge u_k)^*\in (V_{\varpi_k,\ZZ})^*,\] where the later is given by extracting the coefficient of $u_1\wedge\cdots\wedge u_k$ under the standard basis of $\bigwedge^k_{\ZZ}\Std_{n+1,\ZZ}$. With this choice, one easily checks that $\langle e^{d_{\varpi_k}}\cdot u_{\varpi_k,-},u_{\varpi_k,+}^*\rangle >0$. 

For the computation of $S_{\varpi_k,j},T_{\varpi_k,j}$, we choose basis of one-dimensional vector spaces $\mathfrak{sl}_{n+1,e,j},\mathfrak{sl}_{n+1,f,-j}$ via:
\begin{equation}
    e_j:=e^{(j)}\in \mathfrak{sl}_{n+1,e,j}
\end{equation}
\begin{equation}
    f_{j}\in \mathfrak{sl}_{n+1,f,-j},~ \kappa_{\Std_{n+1}}(f_{j},e_j)=1
\end{equation}
where $e^{(j)}$ means the $j$-th power of $e$ regarded as a $(n+1)\times(n+1)$-matrix.

In this way, define $s_{\lambda,j}:=S_{\lambda}(e_j)\in \QQ$ and $t_{\lambda,j}:=T_{\lambda}(f_j)\in \QQ$. We have 
\[s_{\varpi_k,j}=\langle e^{s-j}e_j\cdot (u_{n-k+2}\wedge\cdots\wedge u_{n+1}),  (u_1\wedge\cdots\wedge u_k)^*   \rangle  \] \[t_{\varpi_k,j}=\langle \sum_{s=0}^{k(n+1-k)+j} e^sf_je^{k(n+1-k)+j-s}\cdot  (u_{n-k+2}\wedge\cdots\wedge u_{n+1}), (u_1\wedge\cdots\wedge u_k)^*\rangle\]
\[\epsilon_{\varpi_k,j}=\langle \sum_{s=0}^{k(n+1-k)} e^sf_je^{k(n+1-k)-s}e_j\cdot  (u_{n-k+2}\wedge\cdots\wedge u_{n+1}), (u_1\wedge\cdots\wedge u_k)^*\rangle.\]

\begin{prop}\label{prop:Aformulas}
    For $1\leq k\leq n$ and $1\leq j\leq n$,  We have
    \begin{equation}
        d_{\varpi_k}=kk'
    \end{equation}
    \begin{equation}\label{eq:Adeg}
        \deg_{\varpi_k}=\frac{(kk')!}{\prod_{i=1}^{k}\prod_{j=1}^{k'} (i+j-1)}
    \end{equation}
    
    \begin{equation}\label{eq:Asformula}
    \begin{split}
        s_{\varpi_k,j}=\sum_{a=1}^{k}\sum_{\s\in S_k}\sgn(\sigma)  \binom{kk'-j}{k'+(\sigma(1)-1),\cdots,k'+(\sigma(a)-a)-j,\cdots,k'+(\sigma(k)-k)}
    \end{split}
    \end{equation}

    \begin{equation}\label{eq:Atformula}
    \begin{split}
        t_{\varpi_k,j}=\sum_{b=1}^{k}\sum_{\s\in S_k}\sgn(\sigma)  \binom{kk'+j+1}{k'+(\sigma(1)-1),\cdots,k'+(\sigma(b)-b)+j+1,\cdots,k'+(\sigma(k)-k)}
    \end{split}
    \end{equation}

    \begin{equation}\label{eq:Aepsilonformula}
    \begin{split}
        \epsilon_{\varpi_k,j}=\sum_{a=1}^k\sum_{b=1}^{k}\sum_{\s\in S_k}\sgn(\sigma)\cdot \\ \binom{kk'+1}{k'+(\sigma(1)-1), \cdots,k'+(\sigma(a)-a)-j,\cdots,k'+(\sigma(b)-b)+j+1,\cdots,k'+(\sigma(k)-k)}
    \end{split}
    \end{equation}
    where $k'=n+1-k$. Here, the multinomial coefficient is taken to be zero if any of the entries is negative. In the formula \eqref{eq:Aepsilonformula}, when $a=b$, the multinomial coefficient is understood as \[\binom{kk'+1}{k'+(\sigma(1)-1), \cdots,k'+(\sigma(a)-a)+1,\cdots,k'+(\sigma(k)-k)}.\]
\end{prop}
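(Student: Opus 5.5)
The plan is to compute everything inside $\bigwedge^k\Std_{n+1}$ in the basis of wedge monomials $u_I=u_{i_1}\wedge\cdots\wedge u_{i_k}$, $i_1<\cdots<i_k$. The principal nilpotent $e$ acts as the shift $u_i\mapsto u_{i+1}$, and the auxiliary elements act by the matrices $e_j=e^{(j)}$ (shift by $j$) and $f_j$. By the normalization $\kappa_{\Std_{n+1}}(f_j,e_j)=1$ and the grading, $f_j$ is a lowering matrix supported on the $j$-th subdiagonal in the opposite direction, i.e.\ $f_j u_i=c_{j,i}u_{i-j}$. I would first pin down the coefficients $c_{j,i}$, since the stated formulas suggest they do not enter individually. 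My guess is that only the combined effect of ``$e^s f_j e^{m-s}$ summed over $s$'' matters, and that this can be handled without knowing $c_{j,i}$ explicitly.

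The first two formulas come from lattice-path counting. We have $d_{\varpi_k}=\langle 2\rho,\varpi_k\rangle=k(n+1-k)$, since the weights of $\bigwedge^k$ range over $k$-subsets and $e$ raises the sum of indices by one. Next, $\langle e^{m}u_{\varpi_k,-},u_{\varpi_k,+}^*\rangle$ counts the ways to move the $k$ particles at positions $n-k+2,\dots,n+1$ to positions $1,\dots,k$ one step at a time while keeping them distinct, with the wedge sign. I will use the determinant expansion: $e^m$ acting on a wedge is $\sum \binom{m}{m_1,\dots,m_k}\, e^{m_1}u\wedge\cdots$. So the coefficient is $\sum_{\sigma}\sgn(\sigma)\binom{m}{(\text{displacements})}$, where particle $i$ must travel to slot $\sigma(i)$ with displacement $k'+\sigma(i)-i$ (after fixing the orientation convention so that $e$ moves toward the target). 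This is a Lindström--Gessel--Viennot / Frobenius-type determinant. For $\deg$ it evaluates via the hook-length formula for the $k\times k'$ rectangle, giving \eqref{eq:Adeg}.

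For $s_{\varpi_k,j}$ the same expansion applies with one extra factor $e_j=e^{(j)}$, which acts as a single shift by $j$ on one tensor factor, say particle $a$ (as a derivation on the wedge). This yields exactly \eqref{eq:Asformula}, with $-j$ in slot $a$ and total $kk'-j$. For $t$ and $\epsilon$ the key step is to handle $\sum_{s=0}^{N}e^s f_j e^{N-s}$. My approach is to note that on one tensor factor $\Std$, the operator $\sum_s e^s f_j e^{N-s}$ composed with the right pairing reduces to a count of paths where one particle makes a jump of $-j$ inserted among $N$ unit steps. Summing over the insertion position converts the multinomial $\binom{N}{\dots}$ into $\binom{N+1}{\dots,\ \text{entry}+j+1,\dots}$. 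Concretely, I want the identity: for a single particle going net displacement $D$, $\sum_s (\text{paths with } s \text{ steps, then } f_j, \text{ then } N-s \text{ steps})$ equals the multinomial with $N+1$ total and displacement entry $D+j+1$. I will prove this by checking that $f_j$ is, up to the $\kappa$-normalization, the element making $\sum_s e^sf_je^{N-s}$ behave like $e^{N+j+1}$ divided appropriately. This is where the normalization $\kappa(f_j,e_j)=1$ enters, probably via $\mathfrak{sl}_2$-theory: $f_j$ lies in the lowest-weight vector of $\Sym^{2j}$ inside $\Ad$.

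Combining the two insertions, $e_j$ on particle $a$ and the summed $f_j$ on particle $b$, gives \eqref{eq:Aepsilonformula}, including the degenerate case $a=b$ where the net shift $-j+j+1=+1$. The $e_j$ insertion is fixed at the right end of $\kappa_\lambda$'s word $e^sXYe^{d-s}$ with $Y=e_j$, and $e_j$ commutes with $e$, so its placement is harmless. I expect the main obstacle to be the single-particle identity for $\sum_s e^s f_j e^{N-s}$: determining $f_j$ explicitly and verifying that the resulting sum collapses to one multinomial coefficient with no residual $j$-dependent constant. The first thing I would try is a direct check for $n=1,2$, then an argument using that $\mathrm{ad}(e)^{2j+1}f_j=0$ and $\mathrm{ad}(e)^{2j}f_j\propto e^{(j)}$-type relations, which would produce binomial coefficients telescoping to $\binom{N+1}{\cdot}$.
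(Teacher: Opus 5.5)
The parts of your plan for $d_{\varpi_k}$, for $\deg_{\varpi_k}$ (Frobenius / Lindstr\"om--Gessel--Viennot determinant, then hook lengths) and for $s_{\varpi_k,j}$ are fine. You are also right that the orientation must be fixed: as printed, $e$ kills $u_{\varpi_k,-}$. The paper omits the proof of this proposition, so there is nothing to compare against.

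\textbf{The gap.} The single-particle identity on which you base \eqref{eq:Atformula} and \eqref{eq:Aepsilonformula} is false. Orient things so that $eu_i=u_{i-1}$, the particles start at $\{k'+1,\dots,n+1\}$ and end at $\{1,\dots,k\}$, and write $f_ju_z=c_zu_{z+j}$. Expanding $\sum_{s=0}^{N}e^sf_je^{N-s}$ on $\mathrm{Std}_{n+1}^{\otimes k}$ with $\sum_s\binom{s}{p}\binom{N-s}{q}=\binom{N+1}{p+q+1}$ does produce the multinomial with entry $D_b+j+1$ for the particle $b$ hit by $f_j$. But it is multiplied by a weight $W_b$: the sum of those $c_z$ at which the jump can occur on a path from $x_b$ to $y_b$ inside $\{1,\dots,n+1\}$. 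The normalization $\kappa_{\mathrm{Std}_{n+1}}(f_j,e_j)=1$ only says $\sum_{z=1}^{n+1-j}c_z=1$, and the $c_z$ are not constant, since $f_j$ is proportional to the $j$-th matrix power of $f$.

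A concrete failure: for $\mathfrak{sl}_4$ and $j=1$ one has $(c_1,c_2,c_3)=\tfrac{1}{10}(3,4,3)$. In $\epsilon_{\varpi_2,1}$ you first get $e_1(u_3\wedge u_4)=u_2\wedge u_4$. The particle at $u_2$ can only jump from $u_1$ or $u_2$, so $W=\tfrac{7}{10}$, whether it ends at $u_1$ or at $u_2$. The two resulting terms, $\pm\binom{5}{2}\cdot\tfrac{7}{10}$, cancel only against each other. So no property of $f_j$ or of $\mathrm{ad}(e)^mf_j$ can make the per-particle count equal a multinomial with coefficient $1$. The same problem affects your $a=b$ term: $f_je_j$ acts on one factor by the diagonal matrix with entries $0,\dots,0,c_1,\dots,c_{n+1-j}$, not by a uniform shift of weight $1$.

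\textbf{What is missing: a global cancellation argument.} Put $C(z)=\sum_{z'\le z}c_{z'}$, so $C(z)=0$ for $z\le 0$ and $C(z)=1$ for $z\ge n+1-j$. Whenever the multinomial is nonzero, $W_b=C(x_b)-C(y_b-j-1)$.
\begin{itemize}
\item The $C(x_b)$-part of the alternating sum over $\sigma$ is $C(x_b)$ times the determinant in which the start $x_b$ is replaced by $x_b+j+1$. The starts form a consecutive block ending at $n+1$, so two rows coincide unless $x_b+j+1>n+1$, and in that case $C(x_b)=1$.
\item The $C(y-j-1)$-part, regrouped by the endpoint $\ell\in\{1,\dots,k\}$, is a determinant with $\ell$ replaced by $\ell-j-1$. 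It has two equal columns unless $\ell-j-1\le 0$, and in that case $C(\ell-j-1)=0$.
\end{itemize}
This yields exactly \eqref{eq:Atformula}, using only $\sum_z c_z=1$.

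For \eqref{eq:Aepsilonformula}, apply $e_j$ first as you intended and rerun the argument with starts $x_i-j\delta_{ia}$; this also covers $a=b$. The one new case is $x_b+j+1=x_a$ with $b\neq a$. It still vanishes, because particle $a$ then sits at $x_b+1$, the start of particle $b+1\neq a$.

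In particular the answer depends on $f_j$ only through $\mathrm{tr}(f_je^j)$. That is the correct form of your guess that the individual $c_{j,i}$ do not matter, but the reason is cancellation across the determinant, not a per-particle identity.
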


The proof of these formulas is elementary, and we omit it.
\begin{remark}
    The formula \eqref{eq:Aepsilonformula} should be compared with the \cite[Theorem\,1.3.2]{feng2026eigenweightsarithmetichirzebruchproportionality}.
\end{remark}


\subsubsection{Type \texorpdfstring{$B_{n}$}{B_n}}\label{sec:Bepsilon}
In this subsection, we consider the case $G=\mathrm{SO}_{2n+1}$ in which case $d_j = 2j$ for $1\leq j\leq n$. We have $\Gc=\Sp_{2n}$. We order the fundamental weights as \[\dynkin[labels={\varpi_1,\varpi_2,,\varpi_{n-1},\varpi_n}, edge length=.5cm]{C}{}.\] Define $\widetilde{V}_{\varpi_k,\ZZ}=\bigwedge^{k}_{\ZZ}\Std_{2n,\ZZ}\in \Rep(\Sp_{2n,\ZZ})$ for $1\leq k\leq n$. Here $\widetilde{V}_{\varpi_1,\ZZ}=\Std_{2n,\ZZ}=\bigoplus_{i=1}^{2n}\ZZ\cdot u_i$ is the standard representation of $\Sp_{2n,\ZZ}$, and we choose the maximal torus of $\Sp_{2n,\ZZ}$ given by the prescribed basis of $\Std_{2n,\ZZ}$. The module $\Std_{2n,\ZZ}$ carries a symplectic form $\omega$ satisfying \[
\omega(u_i,u_j)=
\begin{cases}
1, & i+j=2n+1,\, 1\leq i\leq n,\\[2pt]
0, & i+j\neq 2n+1.
\end{cases}
\] Then $\Sp_{2n,\ZZ}=\Aut(\Std_{2n,\ZZ},\omega)$. The invariant bilinear form is given by $\kappa_{\min}=\frac{1}{2}\kappa_{\Std_{2n}}$.

Take $e\in \sp_{2n,\ZZ}$ such that \[e\cdot u_i=
\begin{cases}
u_{i+1}, & 1 \le i \le n-1 ,\\[2pt]
2u_{i+1},   & i = n ,\\[2pt]
-u_{i+1},   & n + 1 \le i \leq 2n - 1 ,\\[2pt]
0,        & i = 2n .
\end{cases}
\] We choose lowest weight (co)vectors \[u_{\varpi_k,-}=(-1)^{k(2n-k-1)/2}u_{2n-k+1}\wedge\cdots\wedge u_{2n}\in \widetilde{V}_{\varpi_k,\ZZ}\] and \[u_{\varpi_k,+}^*=(u_1\wedge\cdots\wedge u_k)^*\in (\widetilde{V}_{\varpi_k,\ZZ})^*\] for $1\leq k\leq n$. Take $V_{\varpi_k,\ZZ}\sub \widetilde{V}_{\varpi_k,\ZZ}$ to be the sub-representation of $\Sp_{2n,\ZZ}$ generated by $u_{\varpi_k,-}\in \widetilde{V}_{\varpi_k,\ZZ}$.  With this choice, one easily checks that $\langle e^{d_{\varpi_k}}\cdot u_{\varpi_k,-},u_{\varpi_k,+}^*\rangle >0$.

We choose basis of one-dimensional vector spaces $\mathfrak{sp}_{2n,e,2j-1},\mathfrak{sp}_{2n,f,1-2j}$ via:
\begin{equation}
    e_j:=e^{(2j-1)}\in \mathfrak{sp}_{2n,e,2j-1}
\end{equation}
\begin{equation}
    f_{j}\in \mathfrak{sp}_{2n,f,1-2j},\, \kappa_{\Std_{2n}}(f_{j},e_j)=2
\end{equation}
as in type $A$ case. Define $s_{\lambda,j}:=S_{\lambda}(e_j)\in \QQ$ and $t_{\lambda,j}=T_{\lambda}(f_j)\in \QQ$.

\begin{prop}\label{prop:Bformulas}
    For $1\leq k\leq n$ and $1\leq j\leq n$, we have
    \begin{equation}
        d_{\varpi_k}=k(2n-k)
    \end{equation}
    \begin{equation}
        \deg_{\varpi_k}= 2^k\deg^{A_{2n-1}}_{\varpi_k}
    \end{equation}
    
    \begin{equation}
        s_{\varpi_k,j}=2^ks^{A_{2n-1}}_{\varpi_{k},2j-1}
    \end{equation}
    \begin{equation}
        t_{\varpi_k,j}=2^{k+1}t^{A_{2n-1}}_{\varpi_{k},2j-1}
    \end{equation}
    \begin{equation}
        \epsilon_{\varpi_k,j}=2^{k+1}\epsilon^{A_{2n-1}}_{\varpi_{k},2j-1}
    \end{equation}
    where we add superscript $A_{2n-1}$ to denote the corresponding invariant of type $A_{2n-1}$ given in Proposition \ref{prop:Aformulas}.
\end{prop}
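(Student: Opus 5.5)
The plan is to compare the type $C_n$ computation with the type $A_{2n-1}$ computation through the embedding $\Sp_{2n}\sub\SL_{2n}$. Everything will be made explicit in the bases fixed above: $\Std_{2n,\ZZ}=\bigoplus\ZZ u_i$ and the principal nilpotent $e$.

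The first step is to compare $e$ with the type $A$ principal nilpotent $e^A$ (where $e^A u_i=u_{i+1}$). Put $D=\mathrm{diag}(\delta_1,\dots,\delta_{2n})$ with $\delta_i=1$ for $i\leq n$, $\delta_i=2$ for $i=n+1$, and the sign pattern $\delta_{i+1}=-\delta_i$ afterwards. Then $e=D e^A D^{-1}$ up to the bookkeeping of which index carries the factor $2$. Conjugating $\bigwedge^k$ by $\bigwedge^k D$ turns every pairing $\langle e^{m}\cdot u_{\varpi_k,-},u_{\varpi_k,+}^*\rangle$ into the corresponding type $A$ pairing, rescaled by the ratio of the diagonal entries of $\bigwedge^kD$ at $u_{2n-k+1}\wedge\cdots\wedge u_{2n}$ and at $u_1\wedge\cdots\wedge u_k$. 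The sign normalization $(-1)^{k(2n-k-1)/2}$ of $u_{\varpi_k,-}$ is chosen precisely to cancel the accumulated signs. The factor $2$ is picked up once per strand crossing the middle edge $u_n\to u_{n+1}$, and there are exactly $k$ such strands. This gives $\deg_{\varpi_k}=2^k\deg^{A_{2n-1}}_{\varpi_k}$, and also $d_{\varpi_k}=k(2n-k)$, since both equal the nilpotency degree on the same extremal vectors.

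Second, I will handle $e_j$ and $f_j$. Since $e_j=e^{(2j-1)}$ is a power of $e$, it conjugates to $(e^A)^{2j-1}=e^A_{2j-1}$ under the same $D$. Hence $S_{\varpi_k}(e_j)$ is again a pure power-of-$e$ pairing with exponent $d_{\varpi_k}$, which gives $s_{\varpi_k,j}=2^k s^{A}_{\varpi_k,2j-1}$. For $f_j$ I will use the grading by $\rho$. The type $A$ slice vector $f^A_{2j-1}\in\mathfrak{sl}_{2n,f,-(2j-1)}$ is characterized, among degree $-(2j-1)$ elements commuting with $f^A$, by $\kappa_{\Std}(f^A_{2j-1},e^A_{2j-1})=1$. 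The key claim is that $D^{-1}f_jD=2f^A_{2j-1}$. To prove it, I will show that the odd-degree part of $\mathfrak{sl}_{2n,f^A}$ is stable under the involution defining $\sp_{2n}$ (after conjugation by $D$). This is plausible because the even exponents $2j$ of $\sp_{2n}$ are exactly the odd-degree pieces of the type $A$ Slodowy slice. Given that, the normalization $\kappa_{\Std_{2n}}(f_j,e_j)=2$ forces the factor $2$. It follows that $t_{\varpi_k,j}=2^{k+1}t^A_{\varpi_k,2j-1}$, and by bilinearity $\kappa_{\varpi_k}(f_j,e_j)=2^{k+1}\kappa^A_{\varpi_k}(f^A_{2j-1},e^A_{2j-1})$.

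Third, I will pass to $\epsilon$ using Corollary~\ref{cor:epsilongenformula}: $\epsilon_{\varpi_k,j}=\kappa_{\varpi_k}(f_j,e_j)/\kappa_{\min}(f_j,e_j)$. Here $\kappa_{\min}=\tfrac12\kappa_{\Std_{2n}}$, so the denominator is $1$, matching the type $A$ denominator $\kappa_{\Std}(f^A,e^A)=1$. Two further checks are needed. The pairings must be computed in $V_{\varpi_k,\ZZ}\sub\widetilde V_{\varpi_k,\ZZ}$, and these agree with the pairings in $\bigwedge^k$ because $u_{\varpi_k,-}$ generates $V_{\varpi_k}$, which is stable under $e$ and $f_j$. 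The expression $\kappa_\lambda$ also uses $u^*_{\varpi_k,+}$ restricted to $V_{\varpi_k}$.

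I expect the main obstacle to be the identification $D^{-1}f_jD=2f^A_{2j-1}$, in particular showing that the symplectic $f_j$ lies in the type $A$ centralizer of $f^A$ rather than only in its $\sp$ part. I would first verify it directly: write $f^A$ as a weighted subdiagonal transpose and check that the transposed powers $(f^A)^{2j-1}$, suitably normalized, preserve $\omega$ after conjugation by $D$.
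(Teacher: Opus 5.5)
Your proposal is correct and is the reduction the paper intends: the paper just calls the statement an immediate consequence of Proposition~\ref{prop:Aformulas} via $\Sp_{2n}\subset\SL_{2n}$, and your conjugation by the diagonal $D$ is the bookkeeping behind that claim. It applies uniformly to every pairing, including $S_{\varpi_k}(e_j)$, where $e_j$ acts on $\bigwedge^k$ as a derivation rather than as a power of $e$ (so that pairing is not literally an $e^{d_{\varpi_k}}$-pairing, but the same factor $2^k$ still comes out). The step you flag as the main obstacle is automatic: $D$ commutes with $2\rho$, which is the same diagonal element for $\mathfrak{sp}_{2n}$ and $\mathfrak{sl}_{2n}$, so $D^{-1}fD=f^A$; hence $D^{-1}f_jD$ lies in the one-dimensional space $\mathfrak{sl}_{2n,f^A,1-2j}$, and the trace normalization forces $D^{-1}f_jD=2f^A_{2j-1}$.
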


This is an immediate consequence of the corresponding result in type $A$ given in Proposition \ref{prop:Aformulas}.

\subsubsection{Type \texorpdfstring{$C_{n}$}{C_n}}\label{sec:Cepsilon}
In this subsection, we consider the case $G=\mathrm{PSp}_{2n}$ in which case $d_j = 2j$ for $1\leq j\leq n$. We have $\Gc=\Spin_{2n+1}$. We order the fundamental weights as \[\dynkin[labels={\varpi_1,\varpi_2,,\varpi_{n-1},\varpi_n}, edge length=.5cm]{B}{}.\] Define $V_{\varpi_k,\ZZ}=\bigwedge^{k}_{\ZZ}\Std_{2n+1,\ZZ}$ for $1\leq k\leq n-1$ and $V_{2\varpi_n,\ZZ}=\bigwedge^n_{\ZZ}\Std_{2n+1,\ZZ}$. Here $V_{\varpi_1,\ZZ}=\Std_{2n+1,\ZZ}=\bigoplus_{i=1}^{2n+1}\ZZ\cdot u_i$ is the standard representation of $\SO_{2n+1,\ZZ}$. We choose the maximal torus of $\Spin_{2n+1,\ZZ}$ determined by the basis of $\Std_{2n+1,\ZZ}$. The module $\Std_{2n+1,\ZZ}$ carries a quadratic form $q$ with the corresponding symmetric bilinear form $(x,y)=q(x+y)-q(x)-q(y)$ satisfying \[
(u_i,u_j)=
\begin{cases}
1, & i+j=2n+2,\, i\neq n+1,\\[2pt]
2, & i=j=n+1, \\[2pt]
0, & i+j\neq 2n+2.
\end{cases}
\] Then $\SO_{2n+1,\ZZ}=\Aut(\Std_{2n+1,\ZZ},q)^{\circ}$ and $\Spin_{2n+1,\ZZ}$ is the universal covering group of $\SO_{2n+1,\ZZ}$. The invariant bilinear form is $\kappa_{\min}=\frac{1}{4}\kappa_{\Std_{2n+1}}$.

Take $e\in \so_{2n+1,\ZZ}$ such that \[e\cdot u_i=
\begin{cases}
2u_{i+1}, & 1 \le i \le n-1 ,\\[2pt]
u_{i+1},   & i = n ,\\[2pt]
-2u_{i+1},   & n + 1 \le i \leq 2n ,\\[2pt]
0,        & i = 2n + 1 .
\end{cases}
\] We can choose lowest weight (co)vectors \[u_{\varpi_k,-}=(-1)^{k(2n-k+1)/2}u_{2n-k+2}\wedge\cdots\wedge u_{2n+1}\in V_{\varpi_k,\ZZ}\]\[u_{\varpi_k,+}^*=(u_1\wedge\cdots\wedge u_k)^*\in (V_{\varpi_k,\ZZ})^*\] for $1\leq k\leq n-1$, and \[u_{2\varpi_n,-}=(-1)^{n(n+1)/2}u_{n+2}\wedge\cdots\wedge u_{2n+1}\in V_{2\varpi_n,\ZZ}\] \[u_{2\varpi_n,+}^*=(u_1\wedge\cdots\wedge u_n)^*\in (V_{2\varpi_n,\ZZ})^*.\] One can take $V_{\varpi_n,\ZZ}\in \Rep(\Spin_{2n+1,\ZZ})$ with lowest weight (co)vectors \[u_{\varpi_n,-}\in V_{\varpi_n,\ZZ},\,u_{\varpi_n,+}^*\in (V_{\varpi_n,\ZZ})^*\] equipped with an injection $V_{2\varpi_n,\ZZ}\sub  V_{\varpi_n,\ZZ}^{\otimes 2}$ such that \[u_{\varpi_n,-}^{\otimes 2}= u_{2\varpi_n,-}, \,(u_{\varpi_n,+}^*)^{\otimes 2}= u_{2\varpi_n,+}^*.\] One easily checks that $\langle e^{d_{\varpi_k}}\cdot u_{\varpi_k,-},u_{\varpi_k,+}^*\rangle >0$ for $1\leq k\leq n$.

We choose basis of one-dimensional vector spaces $\mathfrak{so}_{2n+1,e,2j-1},\mathfrak{so}_{2n+1,f,1-2j}$ via:
\begin{equation}
    e_j:=e^{(2j-1)}\in \mathfrak{so}_{2n+1,e,2j-1}
\end{equation}
\begin{equation}
    f_{j}\in \mathfrak{so}_{2n+1,f,1-2j},\, \kappa_{\Std_{2n+1}}(f_{j},e_j)=4
.\end{equation}
 Define $s_{\lambda,j}:=S_{\lambda}(e_j)\in \QQ$ and $t_{\lambda,j}=T_{\lambda}(f_j)\in \QQ$.

\begin{prop}\label{prop:Cformulas}
    For $1\leq k\leq n-1$ and $1\leq j\leq n$, we have
    \begin{equation}
        d_{\varpi_k}=k(2n+1-k)
    \end{equation}
    \begin{equation}
        \deg_{\varpi_k}= 2^{k(2n-k)}\deg^{A_{2n}}_{\varpi_k}
    \end{equation}
    
    \begin{equation}
        s_{\varpi_k,j}=2^{k(2n-k)}s^{A_{2n}}_{\varpi_{k},2j-1}
    \end{equation}
    \begin{equation}
        t_{\varpi_k,j}=2^{k(2n-k)+2}t^{A_{2n}}_{\varpi_{k},2j-1}
    \end{equation}
    \begin{equation}
        \epsilon_{\varpi_k,j}=2^{k(2n-k)+2}\epsilon^{A_{2n}}_{\varpi_{k},2j-1}.
    \end{equation}

    For $k=n$ and $1\leq j\leq n$, we have 
        \begin{equation}
        d_{\varpi_n}=n(n+1)/2
    \end{equation}
    \begin{equation}\label{eq:Cspindeg}
        \deg_{\varpi_n}= (\binom{2d_{\varpi_n}}{d_{\varpi_n}}^{-1}2^{n^2}\deg_{\varpi_n}^{A_{2n}})^{1/2}
    \end{equation}
    
    \begin{equation}
        s_{\varpi_n,j}=\binom{2d_{\varpi_n}-2j+1}{d_{\varpi_n}}^{-1} \deg_{\varpi_n}^{-1}  2^{n^2-1}s^{A_{2n}}_{\varpi_{n},2j-1}
    \end{equation}
    \begin{equation}
        t_{\varpi_n,j}=\binom{2d_{\varpi_n}+2j}{d_{\varpi_n}}^{-1} \deg_{\varpi_n}^{-1}  2^{n^2+1}t^{A_{2n}}_{\varpi_{n},2j-1}
    \end{equation}
    \begin{equation}\label{eq:Cspinepsilon}
        \epsilon_{\varpi_k,j}=\binom{2d_{\varpi_n}+1}{d_{\varpi_n}}^{-1}\deg_{\varpi_n}^{-1} (2^{n^2+1}\epsilon^{A_{2n}}_{\varpi_{n},2j-1}-\binom{2d_{\varpi_n}+1}{d_{\varpi_n}-2j+1}s_{\varpi_n,j}t_{\varpi_n,j})
    \end{equation}
\end{prop}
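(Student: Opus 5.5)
Throughout, $\Gc=\Spin_{2n+1}$ and the cases $k\le n-1$ and $k=n$ are treated separately.

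\emph{Case $1\le k\le n-1$.} The plan is to compare with type $A_{2n}$ through the inclusion $\so_{2n+1}\subset\mathfrak{sl}_{2n+1}$ acting on $\Std_{2n+1}$. On $\bigwedge^k\Std_{2n+1}=V_{\varpi_k}$ all quantities ($\deg$, $s$, $t$, $\kappa_\lambda$) are matrix coefficients
\[\langle w\cdot u_{\varpi_k,-},u_{\varpi_k,+}^*\rangle\]
of words $w$ in $e$, $e_j$, $f_j$ acting on $\bigwedge^k$ of the standard representation. The type $B$ nilpotent $e$ differs from the type $A$ principal nilpotent $e^A$ only by scalars on each basis vector: $e u_i=c_iu_{i+1}$ with $c_i\in\{2,1,-2\}$. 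Conjugating by a diagonal matrix $\mathrm{diag}(t_i)$ therefore rescales $e$ to $e^A$ up to an overall factor.

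The first step is to compute the effect of this conjugation on the matrix coefficient between the extremal wedges $u_{2n-k+2}\wedge\cdots\wedge u_{2n+1}$ and $(u_1\wedge\cdots\wedge u_k)^*$. Each such word moves the $k$ indices across the whole chain, so the product of the $c_i$ picked up is fixed. It equals $2^{k(2n-k)}$ up to sign, and the sign is absorbed by the chosen $(-1)^{k(2n-k+1)/2}$. Then $e_j=e^{(2j-1)}$ corresponds to a scalar multiple of $(e^A)^{2j-1}$, compatibly. For $f_j$, the normalization $\kappa_{\Std_{2n+1}}(f_j,e_j)=4$ versus type $A$'s $1$ accounts for the extra factor $2^2$ in $t$ and in $\epsilon$. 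This also uses that $f_j$, being the $\kappa$-dual of $e_j$ in $\frgc_f$, is proportional to the $A$-dual $f^A_{2j-1}$ modulo terms pairing trivially. This proportionality holds because the grading pieces $\mathfrak{sl}_{2n+1,f,-(2j-1)}$ are one-dimensional and contain $\so_{2n+1,f,1-2j}$.

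Once these are set up, the identities are bookkeeping. One has $d_{\varpi_k}=\langle2\rho,\varpi_k\rangle=k(2n+1-k)$, the same as in type $A_{2n}$ since the principal grading agrees. Finally, $\epsilon$ is read off from $\kappa_\lambda$ using Corollary~\ref{cor:epsilongenformula}.

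\emph{Case $k=n$ (spin representation).} Here I would use the embedding $V_{2\varpi_n}\subset V_{\varpi_n}^{\otimes2}$ with $u_{\varpi_n,\pm}^{\otimes2}=u_{2\varpi_n,\pm}$. Proposition~\ref{prop:redtofundepsilon} with $\lambda_1=\lambda_2=\varpi_n$ expresses $\deg_{2\varpi_n}$, $S_{2\varpi_n}$, $T_{2\varpi_n}$, $\kappa_{2\varpi_n}$ through the spin invariants. The left-hand sides are known from the $k\le n-1$ style computation applied to $\bigwedge^n\Std_{2n+1}$, giving the $A_{2n}$ comparison with factor $2^{n^2}$.

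The resulting relations are solved in order:
\begin{itemize}
\item $\deg_{2\varpi_n}=\binom{2d}{d}\deg_{\varpi_n}^2$ gives \eqref{eq:Cspindeg}, taking the positive root by the normalization;
\item the $S$ and $T$ relations are linear with equal coefficients, yielding $s_{\varpi_n,j}$ and $t_{\varpi_n,j}$;
\item the $\kappa$ relation contains $2\binom{2d+1}{d+1}\kappa_{\varpi_n,j}\deg_{\varpi_n}$ plus cross terms $s\,t$. Solving it gives \eqref{eq:Cspinepsilon}.
\end{itemize}
The exponents of $2$ shift here because $\bigwedge^n$ carries the half-weight $e u_n=u_{n+1}$ differently.

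The main obstacle I expect is the sign and power-of-$2$ bookkeeping. This includes verifying that $f_j$ and $e_j$ restricted from $\mathfrak{sl}_{2n+1}$ really match the type $A$ normalized elements up to the stated scalars, and that the cross-term binomials in Proposition~\ref{prop:redtofundepsilon} collapse as claimed. I would check both by computing small cases ($n=1,2$) directly.
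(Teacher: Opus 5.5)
Your proposal is correct and follows the paper's route. For $k\le n-1$ you compare $\bigwedge^k\Std_{2n+1}$ with type $A_{2n}$ by diagonal conjugation (Proposition~\ref{prop:Aformulas}); for the spin case you apply Proposition~\ref{prop:redtofundepsilon} with $\lambda_1=\lambda_2=\varpi_n$ to $V_{2\varpi_n}=\bigwedge^n\Std_{2n+1}$ and solve for $\deg$, $s$, $t$ and $\kappa$ in turn. One small correction to your closing remark: $\bigwedge^n\Std_{2n+1}$ picks up the factor $2^{n^2}$ by exactly the same computation as $k\le n-1$. The exponents $2^{n^2\pm1}$ come only from dividing by the factor $2$ that appears when the two symmetric terms of Proposition~\ref{prop:redtofundepsilon} coincide, not from any special behaviour of $e u_n=u_{n+1}$.
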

 This follows immediately from Proposition \ref{prop:Aformulas} and Proposition \ref{prop:redtofundepsilon}.

\begin{remark}
    The formula \eqref{eq:Cspinepsilon} should be compared with the \cite[Theorem\,1.3.6]{feng2026eigenweightsarithmetichirzebruchproportionality}.
\end{remark}

 \subsubsection{Type \texorpdfstring{$D_{n}$}{D_n}}\label{sec:Depsilon}
In this subsection, we consider the case $G=\mathrm{PSO}_{2n}$ in which case we arrange $d_j = 2j$ for $1\leq j\leq n-1$ and $d_n=n$. Note that when $n$ is even, we have $d_{n/2}=d_n=n$. We have $\Gc=\Spin_{2n}$ whose fundamental weights are ordered as \[\dynkin[labels={\varpi_1,\varpi_2,,,\varpi_{n-1},\varpi_n}, edge length=.5cm]{D}{}.\] Define $V_{\varpi_k,\ZZ}=\bigwedge^{k}_{\ZZ}\Std_{2n,\ZZ}\in \Rep(\Spin_{2n,\ZZ})$ for $1\leq k\leq n-2$ and $V_{\varpi_{n-1}+\varpi_n,\ZZ}= \bigwedge^{n}_{\ZZ}\Std_{2n,\ZZ}\in\Rep(\Spin_{2n,\ZZ})$. Here $V_{\varpi_1,\ZZ}=\Std_{2n,\ZZ}=\bigoplus_{i=1}^{2n}k\cdot u_i$ is the standard representation of $\SO_{2n}$. We choose the maximal torus of $\Spin_{2n,\ZZ}$ determined by the basis of $\Std_{2n,\ZZ}$. The module $\Std_{2n,\ZZ}$ carries a quadratic form $q$ with the corresponding symmetric bilinear form $(x,y)=q(x+y)-q(x)-q(y)$ satisfying \[
(u_i,u_j)=
\begin{cases}
1, & i+j=2n+1,\\[2pt]
0, & i+j\neq 2n+1.
\end{cases}
\] Then $\SO_{2n,\ZZ}=\Aut(\Std_{2n,\ZZ},q)^{\circ}$ and $\Spin_{2n,\ZZ}$ is the universal covering group of $\SO_{2n,\ZZ}$. The invariant bilinear form on $\so_{2n}$ in given by $\kappa_{\min}=\frac{1}{2}\kappa_{\Std_{2n}}$.

Take $e\in \so_{2n,\ZZ}$ such that \[e\cdot u_i=
\begin{cases}
u_{i+1}, & 1 \le i \le n-2 ,\\[2pt]
u_{n}+u_{n+1},   & i = n-1 ,\\[2pt]
-u_{n+2},   & i=n,\\[2pt]
-u_{i+1},   & n+1\leq i\leq 2n-1,\\[2pt]
0,        & i = 2n .
\end{cases}
\] We can choose lowest weight (co)vectors \[u_{\varpi_k,-}=(-1)^{k(2n-k-1)/2}u_{2n-k+1}\wedge\cdots\wedge u_{2n}\in V_{\varpi_k,\ZZ}\]\[u_{\varpi_k,+}^*=(u_1\wedge\cdots\wedge u_k)^*\in (V_{\varpi_k,\ZZ})^*\] for $1\leq k\leq n-2$, and \[u_{\varpi_{n-1}+\varpi_n,-}=(-1)^{n(n-1)/2}u_{n+2}\wedge\cdots\wedge u_{2n}\in V_{\varpi_{n-1}+\varpi_n,\ZZ}\] \[u_{\varpi_{n-1}+\varpi_n,+}^*=(u_1\wedge\cdots\wedge u_{n-1})^*\in (V_{\varpi_{n-1}+\varpi_n,\ZZ})^*.\] Consider elements \[u_{2\varpi_{n-1},-}=(-1)^{n(n-1)/2}u_{n+1}\wedge\cdots\wedge u_{2n}\in \bigwedge^n_{\ZZ}\Std_{2n,\ZZ}\] \[u_{2\varpi_{n-1},+}^*=(u_1\wedge\cdots\wedge u_{n})^*\in (\bigwedge^n_{\ZZ}\Std_{2n,\ZZ})^*\] \[u_{2\varpi_{n},-}=(-1)^{n(n-1)/2}u_{n}\wedge u_{n+2}\wedge\cdots\wedge u_{2n}\in \bigwedge^n_{\ZZ}\Std_{2n,\ZZ}\] \[u_{2\varpi_{n},+}^*=(u_1\wedge\cdots\wedge u_{n-1}\wedge u_n)^*\in (\bigwedge^n_{\ZZ}\Std_{2n,\ZZ})^*.\] Let $V_{2\varpi_{n-1},\ZZ},\,V_{2\varpi_n,\ZZ}\sub \bigwedge^n_{\ZZ}\Std_{2n,\ZZ}$ be the sub-representation of $\Spin_{2n,\ZZ}$ generated by $u_{2\varpi_{n-1},-},\,u_{2\varpi_{n},-} \in \bigwedge^n_{\ZZ}\Std_{2n,\ZZ}$, respectively. With these choices, one can take $V_{\varpi_{n-1},\ZZ},\,V_{\varpi_{n},\ZZ}\in \Rep(\Spin_{2n,\ZZ})$ with lowest weight (co)vectors \[u_{\varpi_{n-1},-}\in V_{\varpi_{n-1},\ZZ},\, u_{\varpi_{n-1},+}^*\in (V_{\varpi_{n-1},\ZZ})^*\] \[u_{\varpi_{n},-}\in V_{\varpi_{n},\ZZ},\, u_{\varpi_{n},+}^*\in (V_{\varpi_{n},\ZZ})^*\] such that there exists embeddings $V_{2\varpi_{n-1},\ZZ}\sub V_{\varpi_{n-1},\ZZ}^{\otimes 2},\, V_{2\varpi_{n},\ZZ}\sub V_{\varpi_{n},\ZZ}^{\otimes 2}$ such that \[u_{\varpi_{n-1},-}^{\otimes 2}=u_{2\varpi_{n-1},-},\, (u_{\varpi_{n-1},+}^*)^{\otimes 2}=u_{2\varpi_{n-1},+},\, u_{\varpi_{n},-}^{\otimes 2}=u_{2\varpi_{n},-},\, (u_{\varpi_{n},+}^*)^{\otimes 2}=u_{2\varpi_{n},+}.\] Moreover, there exists embedding $V_{\varpi_{n-1}+\varpi_n,\ZZ}\sub V_{\varpi_{n-1},\ZZ}\otimes V_{\varpi_n,\ZZ}$ such that \[u_{\varpi_{n-1},-}\otimes u_{\varpi_{n},-}=u_{\varpi_{n-1}+\varpi_n,-},\, u_{\varpi_{n-1},+}^*\otimes u_{\varpi_{n},+}^*=u_{\varpi_{n-1}+\varpi_n,+}^*.\] With these choices, one easily checks that $\langle e^{d_{\varpi_k}}\cdot u_{\varpi_k,-},u_{\varpi_k,+}^*\rangle >0$ for $1\leq k\leq n$.

For $1\leq j\leq n-1$, define $e_j\in \mathfrak{so}_{2n,e,2j-1},f_j\in \mathfrak{so}_{2n,f,1-2j}$ via:
\begin{equation}
    e_j:=e^{(2j-1)}\in \mathfrak{so}_{2n,e,2j-1}
\end{equation}
\begin{equation}
    f_{j}\in \Span(f^{(2j-1)})\sub \mathfrak{so}_{2n,f,1-2j},\, \kappa_{\Std_{2n}}(f_{j},e_j)=2
.\end{equation}
For $j=n$, take $e_n\in \so_{2n,e,n-1}$ such that \[e_n\cdot u_i=
\begin{cases}
u_{n}-u_{n+1}, & i=1 ,\\[2pt]
u_{2n},   & i = n ,\\[2pt]
-u_{2n},   & i = n + 1,\\[2pt]
0,   & \textup{else}\\[2pt]
\end{cases}
\] and $f_n\in \so_{2n,f,1-n}$ such that \[f_n\cdot u_i=
\begin{cases}
u_1/2, & i=n ,\\[2pt]
-u_1/2,   & i = n + 1 ,\\[2pt]
(u_{n} - u_{n+1})/2,   & i = 2n,\\[2pt]
0,   & \textup{else}.\\[2pt]
\end{cases}
\] Then one has $\kappa_{\Std_{2n}}(f_{n},e_n)=2$. Moreover, when $n$ is even, one has $\kappa_{\Std_{2n}}(f_{n},e_{n/2})=\kappa_{\Std_{2n}}(f_{n/2},e_n)=0$. Define $s_{\lambda,j}:=S_{\lambda}(e_j)\in\QQ$ and $t_{\lambda,j}=T_{\lambda}(f_j)\in\QQ$.

Take $\sigma\in \Aut(\Spin_{2n,\ZZ})$ to be the unique non-trivial outer automorphism fixing the pinning determined by $e\in \so_{2n,\ZZ}$ and the first vertex of the Dynkin diagram. We have $\sigma(e_j)=e_j$, $\sigma(f_j)=f_j$ for $1\leq j\leq n-1$, $\sigma(e_n)=-e_n$, and $\sigma(f_n)=-f_n$.

When $n$ is odd, we have $\epsilon_{\varpi_k,j}=\kappa_{\varpi_k}(e_j,f_j)$ for all $j,k$. When $n$ is even, the same formula holds for $j\neq n/2,n$ or $k\neq n-1,n$ while the numbers $\epsilon_{\lambda,n/2}$ and $\epsilon_{\lambda,n}$ for $\lambda=\varpi_{n-1}, \varpi_n$ are eigenvalues of the $2\times 2$-matrix \begin{equation}\label{eq:D:22mat}\begin{pmatrix}
\kappa_{\lambda}(f_{n/2},e_{n/2}) & \kappa_{\lambda}(f_{n},e_{n/2}) \\
\kappa_{\lambda}(f_{n/2},e_n) & \kappa_{\lambda}(f_n, e_n)
\end{pmatrix}.\end{equation}

\begin{prop}\label{prop:Dformulas}
    For $1\leq k\leq n-2$ and $1\leq j\leq n-1$, we have
    \begin{equation}
        d_{\varpi_k}=k(2n-1-k),
    \end{equation}
    \begin{equation}
        \deg_{\varpi_k}= 2^{k}\deg^{A_{2n-2}}_{\varpi_k},
    \end{equation}
    
    \begin{equation}
        s_{\varpi_k,j}=2^{k}s^{A_{2n-2}}_{\varpi_{k},2j-1},
    \end{equation}
    \begin{equation}
        t_{\varpi_k,j}=2^{k+1}t^{A_{2n-2}}_{\varpi_{k},2j-1},
    \end{equation}
    \begin{equation}
        \epsilon_{\varpi_k,j}=2^{k + 1}\epsilon^{A_{2n-2}}_{\varpi_{k},2j-1}.
    \end{equation}
    For $1\leq k \leq n-2$ and $j=n$, we have     \begin{equation}
        s_{\varpi_k,n}=0,
    \end{equation}
    \begin{equation}
        t_{\varpi_k,n}=0,
    \end{equation}
    \begin{equation}
        \epsilon_{\varpi_k,n}=2^{k}\sum_{\s\in S_k}\sgn(\s)\binom{k(2n-1-k)+1}{2n-k+(\s(1)-1), 2n-k-1+(\s(2)-2),\cdots,2n-k-1+(\s(k)-k)}.
    \end{equation}  
    For $k=n-1, n$ and $1\leq j\leq n-1$, we have 
        \begin{equation}
        d_{\varpi_k}=n(n-1)/2,
    \end{equation}
    \begin{equation}\label{eq:Dspindeg}
        \deg_{\varpi_k} = (\binom{2d_{\varpi_{n-1}}}{d_{\varpi_{n-1}}}^{-1}2^{n-1}\deg_{\varpi_{n-1}}^{A_{2n-2}})^{1/2},
    \end{equation}
    
    \begin{equation}
        s_{\varpi_k,j}=\binom{2d_{\varpi_{n-1}}-2j+1}{d_{\varpi_{n-1}}}^{-1} \deg_{\varpi_{n-1}}^{-1}  2^{n-2}s^{A_{2n-2}}_{\varpi_{n-1},2j-1},
    \end{equation}
    \begin{equation}
        t_{\varpi_k,j}=\binom{2d_{\varpi_{n-1}}+2j}{d_{\varpi_{n-1}}}^{-1} \deg_{\varpi_{n-1}}^{-1}  2^{n-1}t^{A_{2n-2}}_{\varpi_{n-1},2j-1},
    \end{equation}
    \begin{equation}
        \kappa_{\varpi_k}(f_j,e_j)=\binom{2d_{\varpi_{n-1}}+1}{d_{\varpi_{n-1}}}^{-1}\deg_{\varpi_{n-1}}^{-1} (2^{n-1}\epsilon^{A_{2n-2}}_{\varpi_{n-1},2j-1}-\binom{2d_{\varpi_{n-1}}+1}{d_{\varpi_{n-1}}-2j+1}s_{\varpi_{n-1},j}t_{\varpi_{n-1},j})
    .\end{equation}
    For $k=n-1, n$ and $j=n$, we have    
    \begin{equation}
        s_{\varpi_{n-1},n}=-s_{\varpi_{n},n}=(-1)^{n-1}\binom{2d_{\varpi_{n-1}}-n+1}{d_{\varpi_{n-1}}}^{-1} \deg_{\varpi_{n-1}}^{-1}  2^{n-2}\deg^{A_{2n-3}}_{\varpi_{n-1}}
    ,\end{equation}
    \begin{equation}
    \begin{split}
        t_{\varpi_{n-1},n}=-t_{\varpi_{n},n}= \binom{2d_{\varpi_{n-1}}+n}{d_{\varpi_{n-1}}}^{-1} \deg_{\varpi_{n-1}}^{-1}  2^{n-2}  \sum_{\s\in S_n}\sgn(\s)\cdot \\ \binom{n^2}{n-1+(\s(1)-1),\cdots,n-1+(\s(n-1)-(n-1)),n-1+\s(n)}
        \end{split}
    ,\end{equation}
    \begin{equation}
        \kappa_{\varpi_k}(f_j,e_j)=\binom{2d_{\varpi_{n-1}}+1}{d_{\varpi_{n-1}}}^{-1}\deg_{\varpi_{n-1}}^{-1} (2^{-1}\epsilon_{\varpi_{n-1}+\varpi_n,n}+\binom{2d_{\varpi_{n-1}}+1}{d_{\varpi_{n-1}}-n+1}s_{\varpi_{n-1},j}t_{\varpi_{n-1},j})
    \end{equation}
    where
    \begin{equation}
        \epsilon_{\varpi_{n-1}+\varpi_n,n}=2^{n-1}\sum_{\s\in S_{n-1}}\sgn(\s)\binom{n(n-1)+1}{n+1+(\s(1)-1), n+(\s(2)-2),\cdots,n+(\s(n-1)-(n-1))}.
    \end{equation}  

    Finally, when $n$ is even, $k=n-1, n$, we have
    \begin{equation}
        \begin{split}
        \kappa_{\varpi_{n-1}}(f_n,e_{n/2})=-\kappa_{\varpi_{n}}(f_n,e_{n/2})= \\ \binom{2d_{\varpi_{n-1}}+1}{d_{\varpi_{n-1}}}^{-1}\deg_{\varpi_{n-1}}^{-1}(2^{-1}\kappa_{2\varpi_{n-1}}(f_n,e_{n/2})-\binom{2d_{\varpi_{n-1}}+1}{d_{\varpi_{n-1}}+1-n } t_{\varpi_{n-1}, n}s_{\varpi_{n-1}, n/2})
        \end{split}
    \end{equation}
    \begin{equation}
        \begin{split}
        \kappa_{\varpi_{n-1}}(f_{n/2},e_{n})=-\kappa_{\varpi_{n}}(f_{n/2},e_{n})= \\ \binom{2d_{\varpi_{n-1}}+1}{d_{\varpi_{n-1}}}^{-1}\deg_{\varpi_{n-1}}^{-1}(2^{-1}\kappa_{2\varpi_{n-1}}(f_{n/2},e_{n})-\binom{2d_{\varpi_{n-1}}+1}{d_{\varpi_{n-1}}+1-n } t_{\varpi_{n-1}, n/2}s_{\varpi_{n-1}, n})
        \end{split}
    \end{equation}
    where
    \begin{equation}
        \begin{split}
        \kappa_{2\varpi_{n-1}}(f_n,e_{n/2})=2^{n-1}\sum_{a=1}^{n-1}\sum_{\s\in S_n}\sgn(\s)\cdot \\ \binom{n(n-1)+1}{n-1+(\s(1)-1),\cdots,\s(a)-a,\cdots,n-1+(\s(n-1)-(n-1)),n-1+\s(n)}
        \end{split}
    \end{equation}
    \begin{equation}
        \kappa_{2\varpi_{n-1}}(f_{n/2},e_n)=-2^n t^{A_{2n-3}}_{\varpi_{n-1},n-1}.
    \end{equation}

\end{prop}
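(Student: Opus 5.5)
The plan is to reduce every type $D_n$ invariant to type $A$ computations, in the same way as Propositions \ref{prop:Bformulas} and \ref{prop:Cformulas}. We then use Proposition \ref{prop:redtofundepsilon} to pass between $\varpi_{n-1},\varpi_n$ and the representations $2\varpi_{n-1},2\varpi_n,\varpi_{n-1}+\varpi_n$, which sit inside $\bigwedge^n\Std_{2n}$.

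\textbf{Step 1: the case $1\leq k\leq n-2$, $j\leq n-1$.} The key observation is that, on the chosen basis, $e$ acts on $\Std_{2n}$ as a single Jordan chain of length $2n-1$ through $u_1,\dots,u_{n-1}$, then the sum $u_n+u_{n+1}$, then $u_{n+2},\dots,u_{2n}$, up to signs. The vector $u_n-u_{n+1}$ is killed. So $\Std_{2n}|_{\mathfrak{sl}_2}=\Std_{2n-1}\oplus k$.

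We expand $e^a e_j e^b\cdot u_{\varpi_k,-}$ in $\bigwedge^k$ and pair with $(u_1\wedge\cdots\wedge u_k)^*$. Only wedge monomials supported on the chain contribute. There the operators act as the type $A_{2n-2}$ operators $e$ and $e^{(2j-1)}$, except that each crossing of the middle coordinate contributes a factor. Tracking the crossing factors should give the powers $2^k$ and $2^{k+1}$, the extra $2$ coming from the normalization $\kappa_{\Std_{2n}}(f_j,e_j)=2$ versus $1$ in type $A$. The same bookkeeping gives $d_{\varpi_k}$ and $\deg_{\varpi_k}$.

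\textbf{Step 2: the case $j=n$ with $k\leq n-2$.} Here $e_n$ and $f_n$ move between the chain and the isolated line $u_n-u_{n+1}$. A single application of $e_n$ or $f_n$ to a chain-supported monomial, paired against a chain-supported covector, therefore vanishes, which gives $s_{\varpi_k,n}=t_{\varpi_k,n}=0$. For $\kappa_{\varpi_k}(f_n,e_n)$, the composite $f_ne_n$ returns to the chain, acting like a rank-one operator $u_1\mapsto u_1$ (up to factors). The determinant expansion then yields the stated alternating sum with the shifted first entry.

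\textbf{Step 3: the spin representations.} For $k=n-1,n$ we apply Proposition \ref{prop:redtofundepsilon} with $\lambda_1=\lambda_2=\varpi_{n-1}$, and analogously for $\varpi_n$. We compute the invariants of $2\varpi_{n-1}$ inside $\bigwedge^n\Std_{2n}$ by the same chain argument, which lands in type $A_{2n-2}$ (or $A_{2n-3}$ for $j=n$). We then solve the quadratic relations for $\deg_{\varpi_{n-1}}$, $s$, $t$ and $\kappa$. The relation for $\deg$ gives the square root. The formula for $\kappa$ of the doubled weight gives the displayed expression after subtracting the cross terms $s\,t$.

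The sign flips between $\varpi_{n-1}$ and $\varpi_n$ at $j=n$ follow from the outer automorphism $\sigma$. Indeed, $\sigma$ swaps the two weights and fixes $e$ and $e_j,f_j$ for $j<n$, while negating $e_n$ and $f_n$. Hence $s$ and $t$ at $j=n$ change sign, the mixed entries of \eqref{eq:D:22mat} change sign, and the diagonal entries are preserved. The value $\epsilon_{\varpi_{n-1}+\varpi_n,n}$ is computed directly in $\bigwedge^n\Std_{2n}$ using the Step 2 mechanism.

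\textbf{Main obstacle.} The hard part is the sign and power-of-two bookkeeping in Step 3 for the off-diagonal entries $\kappa_{2\varpi_{n-1}}(f_n,e_{n/2})$ and $\kappa_{2\varpi_{n-1}}(f_{n/2},e_n)$. These mix the isolated line with the chain, so the reduction lands in $A_{2n-3}$ rather than $A_{2n-2}$. I would first verify these two formulas by hand for $n=4$ and then fix the general signs.
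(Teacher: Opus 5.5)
Your plan follows the paper's route. The paper's proof is only the remark that everything follows from Proposition~\ref{prop:Aformulas} and Proposition~\ref{prop:redtofundepsilon}, with $j=n$ ``similar'': the $(2n-1)+1$ Jordan decomposition of $e$ on $\Std_{2n}$ reduces to type $A$, Proposition~\ref{prop:redtofundepsilon} handles the half-spin weights, and $\sigma$ accounts for the sign flips at $j=n$.

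One correction: $V_{\varpi_{n-1}+\varpi_n}=\bigwedge^{n-1}\Std_{2n}$ is not a summand of $\bigwedge^{n}\Std_{2n}=V_{2\varpi_{n-1}}\oplus V_{2\varpi_n}$ (the paper's setup has the same typo). So $\epsilon_{\varpi_{n-1}+\varpi_n,n}$ is just your Step~2 formula at $k=n-1$. Also, the displayed $j=n$ diagonal formula, with its $+$ sign on the $s\,t$ term, does not come from the doubled weight. It comes from Proposition~\ref{prop:redtofundepsilon} applied to $(\lambda_1,\lambda_2)=(\varpi_{n-1},\varpi_n)$, using $s_{\varpi_n,n}=-s_{\varpi_{n-1},n}$ and $t_{\varpi_n,n}=-t_{\varpi_{n-1},n}$.
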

When $j\neq n$, these formulas follow immediately from Proposition \ref{prop:Aformulas} and Proposition \ref{prop:redtofundepsilon}. The case $j=n$ can be treated similarly.

\begin{remark}
    The formulas in Proposition \ref{prop:Dformulas} can be compared with the formulas in \cite[Theorem\,1.3.8]{feng2026eigenweightsarithmetichirzebruchproportionality}.
\end{remark}

\subsubsection{Small rank cases}
In this subsection, we list the invariants $\epsilon_{\varpi_i,j}$ for all semisimple simply-connected groups $\Gc$ of classical types with rank not exceeding 4. These invariants together with the relevant invariants $d_{\lambda},\deg_{\lambda}$ are given in Table \ref{tab:rk1epsilon}, Table \ref{tab:rk2epsilon}, Table \ref{tab:rk3epsilon}, Table \ref{tab:rk4epsilon}.


\begin{table}[ht]
    \centering
    \caption{Value of $\deg_{\lambda}, d_{\lambda}, \epsilon_{\lambda,1}$ for classical groups of rank $1$}
    \label{tab:rk1epsilon}
     \[ \begin{array}{c|c|c|c|c}
     \Gc &  \textup{$\lambda$} & \deg_{\lambda} &d_{\lambda} & \epsilon_{\lambda,1} \\
     \hline
     A_1 &  \varpi_1 & 1 & 1 & 1 \\
     \end{array}\]
\end{table}

\begin{table}[ht]
    \centering
    \caption{Value of $\deg_{\lambda}, d_{\lambda}, (\epsilon_{\lambda,j})_{j=1}^2$ for classical groups of rank $2$}
    \label{tab:rk2epsilon}
     \[ \begin{array}{c|c|c|c|c}
     \Gc &  \textup{$\lambda$} & \deg_{\lambda} &d_{\lambda} & (\epsilon_{\lambda,j})_{j=1}^2 \\
     \hline
     A_2 &  \varpi_1 & 1 & 2 & (1,1) \\
     A_2 &  \varpi_2 & 1 & 2 & (1,1) \\
     \hline
     B_2 &  \varpi_1 & 8 & 4 & (32,32) \\
     B_2 &  \varpi_2 & 2 & 3 & (4,4) \\
     \end{array}\]
\end{table}

\begin{table}[ht]
    \centering
    \caption{Value of $\deg_{\lambda}, d_{\lambda}, (\epsilon_{\lambda,j})_{j=1}^3$ for classical groups of rank $3$}
    \label{tab:rk3epsilon}
     \[ \begin{array}{c|c|c|c|c}
     \Gc &  \textup{$\lambda$} & \deg_{\lambda} &d_{\lambda} & (\epsilon_{\lambda,j})_{j=1}^3 \\
     \hline
     A_3 &  \varpi_1 & 1 & 3 & (1,1,1) \\
     A_3 &  \varpi_2 & 2 & 4 & (4,2,4) \\
     A_3 &  \varpi_3 & 1 & 3 & (1,1,1) \\
     \hline
     B_3 &  \varpi_1 & 32 & 6 & (128,128,128) \\
     B_3 &  \varpi_2 & 10752 & 10 & (168960,67584,125952) \\
     B_3 &  \varpi_3 & 16 & 6 & (64,40,64) \\
     \hline
     C_3 &  \varpi_1 & 2 & 5 & (4,4,4) \\
     C_3 &  \varpi_2 & 56 & 8 & (384,184,328) \\
     C_3 &  \varpi_3 & 336 & 9 & (3168,1568,2720) \\
     \end{array}\]
\end{table}

\begin{table}[ht]
    \centering
    \caption{Value of $\deg_{\lambda}, d_{\lambda}, (\epsilon_{\lambda,j})_{j=1}^4$ for classical groups of rank $4$}
    \label{tab:rk4epsilon}
     \[ \begin{array}{c|c|c|c|c}
     \Gc &  \textup{$\lambda$} & \deg_{\lambda} &d_{\lambda} & (\epsilon_{\lambda,j})_{j=1}^4 \\
     \hline
     A_4 &  \varpi_1 & 1 & 4 & (1,1,1,1) \\
     A_4 &  \varpi_2 & 5 & 6 & (14,6,9,13) \\
     A_4 &  \varpi_3 & 5 & 6 & (14,6,9,13) \\
     A_4 &  \varpi_4 & 1 & 4 & (1,1,1,1) \\
     \hline
     B_4 &  \varpi_1 & 128 & 8 & (512,512,512,512) \\
     B_4 &  \varpi_2 & 1757184 & 14 & (132800768,11501568,19169280,23216128) \\
     B_4 &  \varpi_3 & 2867724288 & 18 & (108973522944,26417823744,42420928512,65536524288) \\
     B_4 &  \varpi_4 & 768 & 10 & (5632,2432 ,3584 ,5248) \\
     \hline
     C_4 &  \varpi_1 & 2  & 7 & (4, 4,4 , 4) \\
     C_4 &  \varpi_2 & 528 & 12 & (4576,1664,3008,3424) \\
     C_4 &  \varpi_3 & 48048 & 15 & (777920,232128,349632,555968) \\
     C_4 &  \varpi_4 & 384384 & 16 & (7468032 ,2376192,3355392,5392128) \\
     \hline
     D_4 &  \varpi_1 & 2 & 6 & (4,4,4,2) \\
     D_4 &  \varpi_2 & 168 & 10 & (1320,528,984,528) \\
     D_4 &  \varpi_3 & 2 & 6 & (4,4,4,2) \\
     D_4 &  \varpi_4 & 2 & 6 & (4,4,4,2) \\
     \end{array}\]
\end{table}

\subsubsection{Exceptional types}
In this subsection, we consider the case that $G$ is of exceptional type. Using Sagemath \cite{sagemath}, one can calculate the numbers $(\epsilon_{\lambda,j})_{j=1}^n$ when $\lambda\in X^*(\Tc)_+$ is the (quasi-)minuscule or adjoint weight. Table \ref{tab:Edeg} gives the numbers $d_{\lambda}, \deg_{\lambda}$ and Table \ref{tab:Eepsilon} gives the numbers $(\epsilon_{\lambda,j})_{j=1}^n$.

\begin{remark}
    Our computation suggests that $\epsilon_{\lambda,i}$ are always algebraic integers. Moreover, when $G$ is not of type $D_n$ for $n\geq 4$ even, they are always positive integers. We do not have a conceptual explanation for this phenomenon.
\end{remark}

\begin{table}[ht]
    \centering
    \caption{Value of $\deg_{\lambda}, d_{\lambda}$ for exceptional types}
    \label{tab:Edeg}

     \[ \begin{array}{c|c|c|c}
     \Gc &  \textup{Name of $\lambda$} & \deg_{\lambda} &d_{\lambda} \\
     \hline
     G_2 &  \textup{quasi-minuscule} & 18 & 6 \\
     G_2 &  \textup{adjoint} & 13608 & 10 \\
     F_4 &  \textup{quasi-minuscule} & 4992 & 16 \\
     F_4 &  \textup{adjoint} & 154791936 & 22 \\
     E_6 &  \textup{minuscule} & 78 & 16 \\
     E_6 &  \textup{adjoint} & 151164 & 22 \\
     E_7 &  \textup{minuscule} & 13110 & 27 \\
     E_7 &  \textup{adjoint} & 141430680 & 34 \\
     E_8 &  \textup{adjoint} & 126937516885200 & 58

     \end{array}\]
     \end{table}

\begin{table}[ht]
\centering
\caption{Values of $\epsilon_{\lambda,j}$ for exceptional types}
\label{tab:Eepsilon}
\[\begin{array}{c|c|c}
    \Gc & \textup{Name of $\lambda$} & (\epsilon_{\lambda,j})_{j=1}^{n} \\
    \hline
     G_2 & \textup{quasi-minuscule} & (108, 108) \\
     G_2 & \textup{adjoint} & (320760, 239112)  \\
     F_4 & \textup{quasi-minuscule} & (52224, 27648, 32640, 47232) \\
     F_4 & \textup{adjoint} & (4016652288, 1610956800, 2099613696, 2908827648)  \\
     E_6 & \textup{minuscule} & (408, 221, 216, 255, 299, 369)\\
     E_6 & \textup{adjoint} & (1961256, 490314, 786600, 1025202, 967518, 1420326)  \\
     E_7 & \textup{minuscule} & (120060, 58320, 48024, 69920, 67176, 85376, 101744) \\
     
     E_7 & \textup{adjoint} & \begin{aligned}(2530864800,
 643719960,
 1025744616,
 921331160, \\
 1301226984,
 1386811784,
 1807586936)\end{aligned}  \\
     E_8 & \textup{adjoint} & \begin{aligned}(3503065990170600,
 1035140220518880,
 1116031452545520,
 1289228099378520, \\
 1602802318771080,
 1661879186158800,
 2045534982573600,
 2471069708566200)\end{aligned}
\end{array}.\]
\end{table}

\subsection{Examples of \texorpdfstring{$b_{\lambda}$}{b}}\label{sec:bexamples}
In this section, we give examples of the number $b_{\lambda}\in \QQ$ for $\lambda\in X^*(\Tc)_+$.

\subsubsection{Reduction to fundamental weights}
The calculation of the numbers $b_{\lambda}$ can be reduced to the calculation of a sum of two fundamental weights $\varpi_i+\varpi_j \in X^*(\Tc)_+$. In fact, the calculation for $\lambda_1 +\lambda_2 + \lambda_3 \in X^*(\Tc)_+$ can be reduced to the calculation of $\lambda_k, \lambda_i+\lambda_j\in X^*(\Tc)_+$ for $1\leq i,j,k\leq 3$. This reduction procedure is provided by Proposition \ref{prop:redtofundb}.

\begin{prop}\label{prop:redtofundb}
For $\lambda_1,\lambda_2,\lambda_3\in X^*(\Tc)_+$, we have
\begin{equation}
\begin{split}
    b_{\lambda_1+\lambda_2+\lambda_3} = \\ \binom{d_{\lambda_1}+d_{\lambda_2}+d_{\lambda_3}+1}{d_{\lambda_1}}\deg_{\lambda_1}b_{\lambda_2+\lambda_3}+\binom{d_{\lambda_1}+d_{\lambda_2}+d_{\lambda_3}+1}{d_{\lambda_2}}\deg_{\lambda_2}b_{\lambda_1+\lambda_3}\\+\binom{d_{\lambda_1}+d_{\lambda_2}+d_{\lambda_3}+1}{d_{\lambda_3}}\deg_{\lambda_3}b_{\lambda_1+\lambda_2} -\binom{d_{\lambda_1}+d_{\lambda_2}+d_{\lambda_3}+1}{d_{\lambda_1}+1}b_{\lambda_1}\deg_{\lambda_2+\lambda_3}\\-\binom{d_{\lambda_1}+d_{\lambda_2}+d_{\lambda_3}+1}{d_{\lambda_2}+1}b_{\lambda_2}\deg_{\lambda_1+\lambda_3}-\binom{d_{\lambda_1}+d_{\lambda_2}+d_{\lambda_3}+1}{d_{\lambda_3}+1}b_{\lambda_3}\deg_{\lambda_1+\lambda_2}
    \end{split}.
\end{equation}   
\end{prop}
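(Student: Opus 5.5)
The plan is to derive the identity directly from the spectral formula of Theorem \ref{thm:bspectral}, by realizing $V_{\lambda_1+\lambda_2+\lambda_3}$ inside a triple tensor product and counting interleavings of $e$'s, in the same way as Proposition \ref{prop:redtofundepsilon}. Put $\lambda=\lambda_1+\lambda_2+\lambda_3$ and $N=d_\lambda=d_{\lambda_1}+d_{\lambda_2}+d_{\lambda_3}$.

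\textbf{Step 1: reduction to the tensor product.} Embed $V_\lambda\subset V_{\lambda_1}\otimes V_{\lambda_2}\otimes V_{\lambda_3}$ as the submodule generated by $u_-:=u_{\lambda_1,-}\otimes u_{\lambda_2,-}\otimes u_{\lambda_3,-}$. Put $u_+^*:=u^*_{\lambda_1,+}\otimes u^*_{\lambda_2,+}\otimes u^*_{\lambda_3,+}$. The extremal weight spaces of weight $\pm\lambda$ in the triple tensor product are one-dimensional and lie in $V_\lambda$ (respectively $V_\lambda^*$). Hence, for any $Z\in U(\frgc)$ of weight $2\lambda$, the pairing $\langle Z u_{\lambda,-},u_{\lambda,+}^*\rangle$ can be computed in the tensor product. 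This holds once we check that $u_-$ and $u_+^*$ are the correctly normalized integral generators (up to a common sign). The positivity normalization $\langle e^{d_\lambda}u_{\lambda,-},u^*_{\lambda,+}\rangle\ge0$ is then inherited, since the pairing is a positive multinomial sum of products of the positive numbers $\deg_{\lambda_i}$.

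\textbf{Step 2: splitting $H_2$.} The element $H_2\in U(\frtc)$ is the quadratic element attached to the $W$-invariant form $Q(h)=\sum_\alpha \alpha(h)^2/4h^\vee_G$. Via the coproduct it acts on the triple tensor product by $Q(h^{(1)}+h^{(2)}+h^{(3)})$. Polarization gives
\[
\Delta^{(3)}H_2=\sum_{i<j}H_2^{(ij)}-\sum_i H_2^{(i)},
\]
where $H_2^{(ij)}$ is the coproduct of $H_2$ acting on factors $i,j$, and $H_2^{(i)}$ acts on factor $i$ alone. Since $e$ acts as the derivation $e^{(1)}+e^{(2)}+e^{(3)}$ and the three summands commute, the quantity $\sum_{s=0}^{N}\langle e^sXe^{N-s}u_-,u_+^*\rangle$ is linear in $X$. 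We evaluate it separately for each of the six terms.

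\textbf{Step 3: counting words.} Expanding, the quantity for a given $X$ is a sum over words in $N$ letters $e^{(1)},e^{(2)},e^{(3)}$ with $X$ inserted once. Only words with exactly $d_{\lambda_i}$ letters $e^{(i)}$ can pair nontrivially with $u_+^*$.

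For $X=H_2^{(ij)}$ with complementary index $k$, the operator $X$ does not touch factor $k$. Choose which $d_{\lambda_k}$ of the $N+1$ slots (the $e$'s together with $X$) carry $e^{(k)}$; there are $\binom{N+1}{d_{\lambda_k}}$ such choices, and factor $k$ contributes $\deg_{\lambda_k}$. The remaining word is exactly the sum defining $b_{\lambda_i+\lambda_j}$, computed through Step 1 applied to $V_{\lambda_i}\otimes V_{\lambda_j}$.

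For $X=H_2^{(i)}$, the $d_{\lambda_i}+1$ symbols on factor $i$ (the $e^{(i)}$'s together with $X$) occupy $\binom{N+1}{d_{\lambda_i}+1}$ possible position sets and give $b_{\lambda_i}$. The other two factors interleave in $\binom{d_{\lambda_j}+d_{\lambda_k}}{d_{\lambda_j}}$ ways with weight $\deg_{\lambda_j}\deg_{\lambda_k}$. By the last formula of Proposition \ref{prop:redtofundepsilon}, this equals $\deg_{\lambda_j+\lambda_k}$.

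Summing with the signs from Step 2 gives exactly the stated formula.

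The main obstacle is Step 1, specifically the normalization of the integral lowest and highest weight vectors and the sign convention. One must ensure that $b$ computed in the tensor product (with the coproduct action of the quadratic element $H_2$) equals $b$ for the irreducible constituent. I would check this by noting that $V_{\lambda,\ZZ}$ embeds saturatedly along the extremal weight line. The rest is bookkeeping of multinomial coefficients.
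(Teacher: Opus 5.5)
Your proposal is correct and is essentially the argument behind the paper's one-line justification that the identity follows immediately from Theorem~\ref{thm:bspectral}: you realize $V_{\lambda_1+\lambda_2+\lambda_3}$ inside the triple tensor product, split $\Delta^{(3)}H_2$ by polarization into pair terms minus single terms, and count interleavings of the $e^{(i)}$'s as in Proposition~\ref{prop:redtofundepsilon}. The only slip is cosmetic: the lowest weight of $V_\lambda$ is $w_0\lambda$ rather than $-\lambda$, which does no harm because the pairing is computed inside the submodule generated by $u_{\lambda_1,-}\otimes u_{\lambda_2,-}\otimes u_{\lambda_3,-}$.
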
 This follows immediately from Theorem \ref{thm:bspectral}.

\subsubsection{Minuscule case}\label{sec:bmin}
When $\lambda\in X^*(\Tc)_+$ is minuscule, the number $b_{\lambda}$ is particularly easy to determine:
\begin{prop}\label{prop:minb}
    When $\lambda\in X^*(\Tc)_+$ is minuscule, we have \[b_{\lambda} = \frac{1}{2}\kappa_{\min}(\lambda,\lambda)(d_{\lambda} +1)\deg_{\lambda}.\]
\end{prop}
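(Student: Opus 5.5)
The plan is to read off $b_\lambda$ from the spectral formula of Theorem \ref{thm:bspectral},
\[
b_{\lambda}=\Big\langle \sum_{s=0}^{d_{\lambda}}e^s H_2 e^{d_{\lambda}-s }u_{\lambda,-},u_{\lambda, +}^*\Big\rangle ,
\]
and to show that for minuscule $\lambda$ the operator $H_2$ acts on all of $V_\lambda$ by the single scalar $\tfrac12\kappa_{\min}(\lambda,\lambda)$. Once that holds, $H_2$ commutes past the powers of $e$. Each of the $d_\lambda+1$ summands then equals $\tfrac12\kappa_{\min}(\lambda,\lambda)\langle e^{d_\lambda}u_{\lambda,-},u_{\lambda,+}^*\rangle=\tfrac12\kappa_{\min}(\lambda,\lambda)\deg_\lambda$, and summing over $s$ gives the claimed $\tfrac12\kappa_{\min}(\lambda,\lambda)(d_\lambda+1)\deg_\lambda$.

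To see that $H_2$ is a scalar, first note that $H_2=(\sum_{\alpha\in\Phi_G}(d\alpha)^2)/4h^\vee_G$ lies in $U(\frtc)$. It therefore acts on each weight space $V_\lambda(\mu)$ by the number $\sum_{\alpha\in\Phi_G}\langle\alpha,\mu\rangle^2/4h^\vee_G$. Here $\mu\in X^*(\Tc)=X_*(T)$ and the $\alpha$ are the roots of $G$.

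Next, the quadratic form $\mu\mapsto\sum_{\alpha\in\Phi_G}\langle\alpha,\mu\rangle^2$ is $W$-invariant, and $G$ is almost simple. Hence this form is a multiple of $\kappa_{\min}$ restricted to $\frt$. The proof of Theorem \ref{thm:bspectral} already records the normalization
\[
\sum_{\alpha\in\Phi_G}\langle\alpha,\mu\rangle^2/4h^\vee_G=\kappa_{\min}(\mu,\mu)/2 .
\]
If this needs independent justification, I would check it on one short coroot $\mu$ of $G$, i.e.\ a short root of $\Gc$, where $\kappa_{\min}(\mu,\mu)=2$. There it reduces to the classical identity that the sum of squared pairings of roots with a short coroot is $4h^\vee_G$.

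Finally, since $\lambda$ is minuscule, every weight $\mu$ of $V_\lambda$ lies in the orbit $W\lambda$. By $W$-invariance, $\kappa_{\min}(\mu,\mu)=\kappa_{\min}(\lambda,\lambda)$ for every such $\mu$, so $H_2$ acts on $V_\lambda$ by the scalar $\tfrac12\kappa_{\min}(\lambda,\lambda)$, as required.

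The only genuinely delicate point is this normalization, namely that the constant is exactly $\tfrac12\kappa_{\min}$ with no stray sign or factor of $l_G$. I expect the main obstacle to be making sure that the sign convention $[\cL_{\det}|_{\{t^\mu\}/\Gm}]=-\kappa_{\min}(\mu,\mu)/2$ in the proof of Theorem \ref{thm:bspectral} matches the positive $H_2$ appearing in its statement. I would cross-check the result against the minuscule computation in \cite{FYZvolume} for $\PGL_2$ with $\lambda$ the standard coweight. There $d_\lambda=1$, $\deg_\lambda=1$ and $\kappa_{\min}(\lambda,\lambda)=1/2$, so the formula predicts $b_\lambda=1/2$.
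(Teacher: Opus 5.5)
Your proposal is correct and is exactly the paper's argument. The paper only says the proposition follows immediately from Theorem \ref{thm:bspectral}; you supply the omitted step that $H_2$ acts on each weight space $V_\lambda(\mu)$ by $\kappa_{\min}(\mu,\mu)/2$, and that this is the single constant $\kappa_{\min}(\lambda,\lambda)/2$ because every weight of a minuscule $V_\lambda$ lies in $W\lambda$. Your sign worry is moot, since the two minus signs in the proof of Theorem \ref{thm:bspectral} cancel and its statement already uses the positive $H_2$.
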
 This follows immediately from Theorem \ref{thm:bspectral}. The following is a table of the invariants involved in the formula of $b_{\lambda}$ for all minuscule weights $\lambda\in X^*(\Tc)$:
   \[
\begin{array}{c|c|c|c|c}
\Gc & \textup{Name of $\lambda$} & \kappa_{\min}(\lambda,\lambda) & d_{\lambda} & \deg_{\lambda}   \\
\hline
A_{n}(n\geq 1) & \varpi_k (1\leq k\leq n) & k(n+1-k)/(n+1) & k(n+1-k) & \text{\eqref{eq:Adeg}}   \\
B_{n}(n\geq 1) & \textup{spin} & n/2 & n(n+1)/2  & \text{\eqref{eq:Cspindeg}}   \\
C_{n}(n\geq 1) & \textup{standard} & 1 & 2n-1 & 2 \\
D_n (n\geq 3) & \textup{standard} & 1 & 2n-2 & 2 \\
D_n (n\geq 3) & \textup{half-spin} & n/4 & n(n-1)/2 & \text{\eqref{eq:Dspindeg}} \\
E_6 & \textup{minuscule} & 4/3 &  16 & 78 \\
E_7 & \textup{minuscule} & 3/2  & 27 & 13110
\end{array}
.\] 

\subsubsection{Classical types}
When $\Gc$ is of classical type, one can write down formulas parallel to those in \S\ref{sec:Aepsilon}, \S\ref{sec:Bepsilon}, \S\ref{sec:Cepsilon}, and \S\ref{sec:Depsilon}. We omit the details.

\subsubsection{Exceptional types}
When $\Gc$ is of exceptional type, with the help of Sagemath \cite{sagemath}, we calculate the number $b_{\lambda}$ when $\lambda\in X^*(\Tc)_+$ is the (quasi-)minuscule or adjoint weight. The result is collected in Table \ref{tab:Eb}.

\begin{table}[ht]
\centering
\caption{Values of $b_{\lambda}$ for exceptional types}
\label{tab:Eb}
\[\begin{array}{c|c|c}
    \Gc & \textup{Name of $\lambda$} & b_{\lambda} \\
    \hline
     G_2 & \textup{quasi-minuscule} & 108 \\
     G_2 & \textup{adjoint} & 279936  \\
     F_4 & \textup{quasi-minuscule} & 79872 \\
     F_4 & \textup{adjoint} & 5318025216  \\
     E_6 & \textup{minuscule} & 884\\
     E_6 & \textup{adjoint} & 3325608  \\
     E_7 & \textup{minuscule} & 275310 \\
     
     E_7 & \textup{adjoint} & 4808643120  \\
     E_8 & \textup{adjoint} & 7362375979341600
\end{array}.\]
\end{table}

\bibliographystyle{amsalpha}
\bibliography{Bibliography}

@misc{BZSV,
      title={Relative {Langlands} duality}, 
      author={David Ben-Zvi and Yiannis Sakellaridis and Akshay Venkatesh},
      year={2024},
      eprint={2409.04677},
      archivePrefix={arXiv},
      primaryClass={math.RT},
      url={https://arxiv.org/abs/2409.04677}, 
}

@misc{BF,
      title={Equivariant {Satake} category and {Kostant}-{Whittaker} reduction}, 
      author={Roman Bezrukavnikov and Michael Finkelberg},
      year={2008},
      eprint={0707.3799},
      archivePrefix={arXiv},
      primaryClass={math.RT},
      url={https://arxiv.org/abs/0707.3799}, 
}

@misc{liu2025higherperiodintegralsderivatives,
      title={Higher Period Integrals and Derivatives of {$L$}-functions}, 
      author={Shurui Liu and Zeyu Wang},
      year={2025},
      eprint={2504.00275},
      archivePrefix={arXiv},
      primaryClass={math.NT},
      url={https://arxiv.org/abs/2504.00275}, 
}

@misc{GR,
      title={}, 
      author={Dennis Gaitsgory and Sam Raskin},
}

@misc{FYZvolume,
      title={Arithmetic volumes of moduli stacks of {S}htukas}, 
      author={Tony Feng and Zhiwei Yun and Wei Zhang},
      year={2026},
      eprint={2601.18557},
      archivePrefix={arXiv},
      primaryClass={math.NT},
      url={https://arxiv.org/abs/2601.18557}, 
}

@article{yun2011integral,
  title={Integral homology of loop groups via {L}anglands dual groups},
  author={Yun, Zhiwei and Zhu, Xinwen},
  journal={Representation Theory of the American Mathematical Society},
  volume={15},
  number={9},
  pages={347--369},
  year={2011}
}

@manual{sagemath,
  Key          = {SageMath},
  Author       = {{The Sage Developers}},
  Title        = {{S}ageMath, the {S}age {M}athematics {S}oftware {S}ystem ({V}ersion 10.7)},
  note         = {{\tt https://www.sagemath.org}},
  Year         = {2025},
}

@misc{feng2026eigenweightsarithmetichirzebruchproportionality,
      title={Eigenweights for arithmetic {H}irzebruch Proportionality}, 
      author={Tony Feng},
      year={2026},
      eprint={2601.23245},
      archivePrefix={arXiv},
      primaryClass={math.RT},
      url={https://arxiv.org/abs/2601.23245}, 
}

@article{heinloth2010cohomology,
  title={The cohomology rings of moduli stacks of principal bundles over curves},
  author={Heinloth, Jochen and Schmitt, Alexander HW},
  journal={Documenta Mathematica},
  volume={15},
  pages={423--488},
  year={2010}
}

@misc{ginzburg1998loopgrassmanniancohomologyprincipal,
      title={Loop {G}rassmannian cohomology, the principal nilpotent and {K}ostant theorem}, 
      author={Victor Ginzburg},
      year={1998},
      eprint={math/9803141},
      archivePrefix={arXiv},
      primaryClass={math.AG},
      url={https://arxiv.org/abs/math/9803141}, 
}
\end{document}